\numberwithin{equation}{section}
\newtheorem{prop}{Proposition}[section]
\newtheorem{theo}[prop]{Theorem}
\newtheorem*{theo*}{Theorem}
\newtheorem{lemm}[prop]{Lemma}
\newtheorem{coro}[prop]{Corollary}
\newtheorem{claim}{Claim}
\newtheorem{defi}[prop]{Definition}
\newtheorem{rema}[prop]{Remark}
\theoremstyle{definition}
\newtheorem{prob}[prop]{Problem}
\newtheorem{quest}[prop]{Question}
\newcommand{\DD}{\mathbf{D}}
\newcommand{\KK}{\mathbf{K}}
\newcommand{\NN}{\mathbf{N}}
\newcommand{\PP}{\mathbf{P}}
\newcommand{\RR}{\mathbf{R}}
\renewcommand{\SS}{\mathbf{S}}
\newcommand{\TT}{\mathbf{T}}
\newcommand{\cE}{\mathcal E}
\newcommand{\cK}{\mathcal K}
\newcommand{\cM}{\mathcal M}
\newcommand{\cR}{\mathcal R}
\newcommand{\cV}{\mathcal V}
\newcommand{\sM}{\mathscr{M}}
\DeclareMathOperator{\tr}{tr}
\DeclareMathOperator{\Tr}{Tr}
\DeclareMathOperator{\graph}{graph}
\DeclareMathOperator{\spt}{spt}
\DeclareMathOperator{\loc}{loc}
\DeclareMathOperator{\vol}{vol}
\DeclareMathOperator{\area}{area}
\DeclareMathOperator{\Ric}{Ric}
\DeclareMathOperator{\Div}{div}
\DeclareMathOperator{\sff}{\mathrm{I\!I}}
\newcommand{\eps}{\varepsilon}
\newcommand{\restr}{\mathbin{\vrule height 1.6ex depth 0pt width
0.13ex\vrule height 0.13ex depth 0pt width 1.3ex}}
\numberwithin{equation}{section}
\begin{document}

\title[Metrics with $\lambda_1 \geq 0$]{Metrics with $\lambda_1(-\Delta + k R) \geq 0$ and flexibility in the Riemannian Penrose Inequality}

\author{Chao Li}
\address{Courant Institute, New York University, New York, NY 10012, USA}
\email{chaoli@nyu.edu}

\author{Christos Mantoulidis}
\address{Department of Mathematics, Rice University, Houston, TX 77005, USA}
\email{christos.mantoulidis@rice.edu}

\begin{abstract}
	On a closed manifold, consider the space of all Riemannian metrics for which $-\Delta + kR$ is positive (nonnegative) definite, where $k > 0$ and $R$ is the scalar curvature. This spectral generalization of positive (nonnegative) scalar curvature arises naturally for different values of $k$ in the study of scalar curvature via minimal hypersurfaces, the Yamabe problem, and Perelman's Ricci flow with surgery. When $k=1/2$, the space models apparent horizons in time-symmetric initial data to the Einstein equations. We study these spaces in unison and generalize Cod\'a Marques's path-connectedness theorem. Applying this with $k=1/2$, we compute the Bartnik mass of 3-dimensional apparent horizons and the Bartnik--Bray mass of their outer-minimizing generalizations in all dimensions. Our methods also yield efficient constructions for the scalar-nonnegative fill-in problem.
\end{abstract}

\maketitle


\section{Introduction} \label{sec:introduction}

\subsection{The spaces}

In all that follows, $M$ denotes a closed $n$-manifold and $\operatorname{Met}(M)$ denotes the space of smooth Riemannian metrics on $M$. We emphasize that, unless stated otherwise, our manifolds are not assumed to be connected.

\begin{defi} \label{defi:m.k.plus}
	For $k \in (0, \infty)$, we define 
\begin{equation} \label{eq:mbar.k.plus}
	\sM^{\geq 0}_{k}(M) := \{ g \in \operatorname{Met}(M) : \lambda_1(-\Delta_g + k R_g) \geq 0 \},
\end{equation}
where $\lambda_1(-\Delta_g + k R_g)$ is the first eigenvalue of the operator $-\Delta_g + kR_g$ on $M$, and $R_g$ is the scalar curvature of $g$. We also define
\begin{equation} \label{eq:mbar.infty.plus}
	\sM^{\geq 0}_{\infty}(M) := \{ g \in \operatorname{Met}(M) : R_g \geq 0 \}.
\end{equation}
Finally, we define $\sM^{>0}_{k}(M)$, $k \in (0, \infty]$, as above with all ``$\geq$'' replaced by ``$>$.''
\end{defi}

These spaces are not generally encountered in the literature in this level of generality, so some remarks are in order about their actual geometric significance. First, and crucially, these spaces of metrics are closed under scaling and diffeomorphisms. Second, $\sM^{>0}_{k}(M)$ and $\sM^{\geq 0}_{k}(M)$ are descending filtrations in the space of metrics on $M$, i.e., for $0 < k < k' \leq \infty$, 
\begin{equation} \label{eq:m.k.plus.inclusion}
	\begin{array}{ccc}
		\sM^{>0}_{k'}(M) & \subset & \sM^{\geq 0}_{k'}(M) \\
		\cap & & \cap \\
		\sM^{>0}_{k}(M) & \subset & \sM^{\geq 0}_{k}(M).
	\end{array}
\end{equation}
Their geometric interest is due to:
\begin{itemize}
	\item When $k=\infty$, $\sM^{>0}_{k}(M)$ and $\sM^{\geq 0}_k(M)$ denote the sets of metrics on $M$ with, respectively, everywhere positive or everywhere nonnegative scalar curvature. The study of these spaces (even determining necessary and sufficient conditions for their non-emptiness) goes back several decades and is still active. 
	\item When $k=\tfrac12$, $\sM^{>0}_{k}(M)$ and $\sM^{\geq 0}_{k}(M)$ denote the sets of metrics induced on $M$ if $M$ occurs as a two-sided stable minimal hypersurface in an ambient manifold with, respectively, everywhere positive or  everywhere nonnegative scalar curvature. See Lemma \ref{lemm:m.12.plus.equivalences}. 
	\item When $k=\tfrac14$, $\sM^{>0}_{k}(M)$ appears crucially in Perelman's work on 3-dimensional Ricci flow with surgery \cite{Perelman1, Perelman2}. 
	\item When $k=\tfrac{n-2}{4(n-1)}$ and $n = \dim M \geq 3$, $\sM^{>0}_{k}(M)$ and $\sM^{\geq 0}_{k}(M)$ denote sets of metrics that arise in the Yamabe problem, specifically, the sets of metrics on $M$ that are, respectively, conformal to some metric with everywhere positive or everywhere nonnegative scalar curvature. See Lemma \ref{lemm:m.k.plus.equivalences}. 
\end{itemize}
In this paper, we put these spaces on common footing by incorporating them into a single filtration that interpolates between them.

\subsection{Tools}

Our starting point is a generalization of the theorem of Cod\'a Marques \cite{Marques:deforming.psc} regarding closed manifolds $M$ that carry metrics with positive scalar curvature, i.e., with $\sM^{>0}_\infty(M) \neq \emptyset$. Such manifolds will be referred to as ``topologically PSC.''\footnote{In the literature, they are sometimes referred to as having positive Yamabe invariant.} For closed orientable topologically PSC 3-manifolds $M$, he proved the path-connectedness of the \textit{moduli space} $\sM^{>0}_{\infty}(M) \big/ \operatorname{Diff}_+(M)$ of positive scalar curvature metrics up to orientation-preserving diffeomorphisms. For the proof, he exploited Hamilton--Perelman's Ricci flow with surgery \cite{Perelman2}, which he combined with the Gromov--Lawson \cite{GromovLawson:classification} construction of the positive scalar curvature connected-sum procedure (cf. Schoen--Yau \cite{SchoenYau:structure.psc}). We extend the result to:

\begin{theo} \label{theo:m.k.plus.path.moduli}
	Let $M$ be a closed orientable topologically PSC 3-manifold.\footnote{When $M$ is not topologically PSC, the $\sM^{>0}_{k}(M)$ in Theorem \ref{theo:m.k.plus.path.moduli} are all empty (see Corollary \ref{coro:m.k.plus.nonempty}) and the $\sM^{\geq 0}_{k}(M)$ consist of Ricci-flat metrics (see Lemma \ref{lemm:li.monotonicity.consequence}), so they are flat and isometrically covered by flat tori, and their space is understood (see \cite{Kang:flat.metrics}).} Then,
	\[ \sM^{> 0}_{k}(M) \big/ \operatorname{Diff}_+(M) \text{ and } \sM^{\geq 0}_{k}(M) \big/ \operatorname{Diff}_+(M) \]
	are both path-connected for all $k \in [\tfrac14, \infty]$. For the latter space, all path interiors can be taken in $\sM^{> 0}_{k}(M) \big/ \operatorname{Diff}_+(M)$.
\end{theo}

Once we have Theorem \ref{theo:m.k.plus.path.moduli}, the recent breakthrough theorem of Bamler--Kleiner \cite{BamlerKleiner:contractibility} proving that $\sM^{>0}_{\infty}(M)$ is contractible (and thus path-connected) when $M$ is a closed orientable topologically PSC 3-manifold combines with Theorem \ref{theo:m.k.plus.path.moduli} to give:

\begin{theo} \label{theo:m.k.plus.path}
	Let $M$ be a closed orientable topologically PSC 3-manifold. Then,
	\[ \sM^{> 0}_{k}(M) \text{ and } \sM^{\geq 0}_{k}(M) \]
	are both path-connected for all $k \in [\tfrac14, \infty]$. For the latter space, all path interiors can be taken in $\sM^{> 0}_{k}(M)$.
\end{theo}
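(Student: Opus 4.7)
The plan is to bootstrap Theorem \ref{theo:m.k.plus.path.moduli} (moduli path-connectedness) using Bamler--Kleiner (total-space path-connectedness at $k=\infty$) to upgrade to total-space path-connectedness at all $k \in [\tfrac14,\infty]$. Fix such a $k$ and pick any $h \in \sM^{>0}_\infty(M) \subset \sM^{>0}_k(M)$, which exists because $M$ is topologically PSC.

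First I would establish the following intermediate claim: any $g \in \sM^{>0}_k(M)$ admits a path in $\sM^{>0}_k(M)$ ending at some $\phi^* h \in \sM^{>0}_\infty(M)$ for some $\phi \in \operatorname{Diff}_+(M)$. This is a priori stronger than what Theorem \ref{theo:m.k.plus.path.moduli} says by itself, since the quotient map $\sM^{>0}_k(M) \to \sM^{>0}_k(M)/\operatorname{Diff}_+(M)$ is not a fibration and a path in the base need not lift with a prescribed starting point. However, the proof of Theorem \ref{theo:m.k.plus.path.moduli} is constructive: the deformations it uses (Ricci flow with surgery, Gromov--Lawson and Schoen--Yau connect-sum operations, conformal rescalings) live naturally in the total space and produce endpoints agreeing with the target only up to pullback by a specific diffeomorphism. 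Reading off these constructions yields the claimed lifted path.

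Given the claim and two metrics $g_0, g_1 \in \sM^{>0}_k(M)$, I obtain paths $\gamma_i \colon [0,1] \to \sM^{>0}_k(M)$ with $\gamma_i(0) = g_i$ and $\gamma_i(1) = \phi_i^* h \in \sM^{>0}_\infty(M)$ for some $\phi_i \in \operatorname{Diff}_+(M)$. Bamler--Kleiner furnishes a path $\delta$ in $\sM^{>0}_\infty(M) \subset \sM^{>0}_k(M)$ from $\phi_0^* h$ to $\phi_1^* h$, and the concatenation $\gamma_0 \cdot \delta \cdot \gamma_1^{-1}$ is a path from $g_0$ to $g_1$ in $\sM^{>0}_k(M)$. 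The nonnegative version runs identically once $\sM^{\geq 0}_\infty(M)$ is known to be path-connected: a brief Ricci flow sends any $R \geq 0$ metric into $\sM^{>0}_\infty(M)$ unless the metric is Ricci-flat, and in dimension $3$ Ricci-flat means flat, which is impossible on a topologically PSC manifold (a connect sum of spherical space forms and $S^1 \times S^2$ factors).

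The main obstacle is the lift in the intermediate claim: general path-lifting from $\sM^{>0}_k(M)/\operatorname{Diff}_+(M)$ to $\sM^{>0}_k(M)$ is not a formality, and one has to verify by hand that the specific deformations underlying Theorem \ref{theo:m.k.plus.path.moduli} are already formulated in $\operatorname{Met}(M)$ rather than merely in the quotient. Aside from that, the only delicate point is excluding Ricci-flat representatives in the nonnegative case, which is a classical topological obstruction in dimension three.
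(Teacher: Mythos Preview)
Your proposal is correct and is essentially the paper's own argument (carried out in Proposition~\ref{prop:m.k.plus.pairs}): lift the constructive moduli path from Theorem~\ref{theo:m.k.plus.path.moduli} to the total space so that each metric is joined through $\sM^{>0}_k(M)$ to a canonical PSC metric, then bridge the two canonical endpoints via Bamler--Kleiner inside $\sM^{>0}_\infty(M)$. One small cleanup: for the $\sM^{\geq 0}_k$ case you do not actually need $\sM^{\geq 0}_\infty(M)$ path-connected, since Theorem~\ref{theo:m.k.plus.path.moduli} already covers $\sM^{\geq 0}_k$ (its proof begins with a short Ricci flow, which by Lemma~\ref{lemm:li.monotonicity.consequence} immediately pushes $\lambda_1 \geq 0$ to $\lambda_1 > 0$), so the lifted paths again terminate in $\sM^{>0}_\infty(M)$ and Bamler--Kleiner for the strict space suffices.
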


Naturally, it would be interesting to know what happens in Theorems \ref{theo:m.k.plus.path.moduli}, \ref{theo:m.k.plus.path} in the regime $k \in (0, \tfrac14)$, particularly given that the 3-dimensional Yamabe problem has associated $k = \tfrac18 < \tfrac14$. To that end, we note the following special companion result:

\begin{theo} \label{theo:m.k.plus.path.yamabe}
	Let $M$ be a closed orientable topologically PSC 3-manifold. Then,
	\[ \cM^{>0}_{1/8}(M) \text{ and } \cM^{\geq 0}_{1/8}(M) \]
	are contractible and weakly contractible, respectively.
\end{theo}

It would also be interesting to understand topological properties of the inclusion in \eqref{eq:m.k.plus.inclusion} besides path-connectedness, along the lines of the Bamler--Kleiner result. We do not pursue this. See Appendix \ref{app:m.k.plus.more} for more on these spaces.

\subsection{Bartnik mass of apparent horizons}

When $k = \tfrac12$, Definition \ref{defi:m.k.plus} relates to the space of apparent horizons (quasilocal black hole boundaries), diffeomorphic to $M$, of time-symmetric $(n+1)$-dimensional initial data to the Einstein equations satisfying the dominant energy condition. To see why, let us recall the setting.

For closed $n$-dimensional $(M, g)$, the apparent horizon Bartnik mass is defined as\footnote{We adopt the convention that the infimum of an empty set is $\infty$.}
\begin{equation} \label{eq:bartnik}
	\mathfrak{m}_{B}(M, g, H = 0) = \inf \{ \mathfrak{m}_{ADM}(\bm{M}, \bm{g}) : (\bm{M}, \bm{g}) \in \cE_{B}(M, g, H=0) \},
\end{equation}
where $\cE_{B}(M, g, H=0)$ is the set of complete, connected, asymptotically flat $(\bm{M}, \bm{g})$ with nonnegative scalar curvature, no closed interior minimal hypersurfaces, and minimal ($H = 0$) boundary isometric to $(M, g)$. The quantity $\mathfrak{m}_{ADM}(\bm{M}, \bm{g})$ is the ADM energy/mass of the time-symmetric initial data set \cite{ADM:1, ADM:2}. Such $(\bm{M}, \bm{g})$ are time-symmetric initial data sets for solutions of Einstein's equations with the dominant energy condition. 

Note that all $(\bm{M}, \bm{g}) \in \cE_{B}(M, g, H=0)$ are orientable when $n+1 \leq 7$ (otherwise geometric measure theory yields a closed interior minimizing hypersurface) and thus $M = \partial \bm{M}$ is orientable whenever $\cE_{B}(M, g, H=0) \neq \emptyset$. Thus, $M$ is always assumed orientable when discussing $\mathfrak{m}_{B}(M, g, H = 0)$, at least in dimension $n+1 \leq 7$.

The apparent horizon Bartnik mass is difficult to compute because:
\begin{itemize}
	\item the set $\cE_{B}(M, g, H=0)$ is difficult to understand and is even unclear when it is nonempty (minimal surfaces tend to abound, \cite{IrieMarquesNeves:generic, MarquesNevesSong:equidistribution, ChodoshMantoulidis:generic, GasparGuaraco:weyl, Zhou:mult.one, Song:yau.conjecture});
	\item the quantity $\mathfrak{m}_{ADM}(\bm{M}, \bm{g})$ is difficult to compute.
\end{itemize}
There exists a very nontrivial lower bound for \eqref{eq:bartnik} due to Bray \cite{Bray:penrose} and Bray--Lee's \cite{BrayLee:penrose} Riemannian Penrose Inequality, a refinement of the Schoen--Yau Positive Energy Theorem \cite{SchoenYau:positive.energy, Schoen:variational.theory} (cf. Witten \cite{Witten:positive.energy}). The Riemannian Penrose Inequality says that, for $2 \leq n \leq 6$, one has
\begin{equation} \label{eq:rpi} 
	(\bm{M}, \bm{g}) \in \cE_B(M, g, H=0) \implies \mathfrak{m}_{ADM}(\bm{M}, \bm{g}) \geq \tfrac12 (\sigma_{n}^{-1} \vol_g(M))^{(n-1)/n},
\end{equation}
and thus obviously
\begin{equation} \label{eq:rpi.bartnik}
	\mathfrak{m}_{B}(M, g, H = 0) \geq \tfrac12 (\sigma_{n}^{-1} \vol_g(M))^{(n-1)/n},
\end{equation}
where $\sigma_n$ is the volume of the standard round $\SS^n$ in both equations above; we note that the first proof of a special case of the theorem was provided by Huisken--Illmanen \cite{HuiskenIlmanen:penrose} where they treated the case $n = 2$ and $M$ connected. The Riemannian Penrose Inequality also carries a rigidity statement. Namely, equality is attained on the right of \eqref{eq:rpi} if and only if $(\bm{M}, \bm{g}) \in \cE_B(M, g, H=0)$ is the Riemannian mass $m$ exterior Schwarzschild manifold 
\[ (\bm{M}, \bm{g}) \cong \left( \RR^{n+1} \setminus B_{(m/2)^{1/(n-1)}}(\bm{0}), \Big( 1 + \tfrac{m}{2} |\bm{x}|^{1-n} \Big)^{\frac{4}{n-1}} \bm{\delta} \right), \]
with $\bm{\delta}$ the flat metric on $\RR^{n+1}$ and the parameter choice
\[ m = \tfrac12 (\sigma_{n}^{-1} \vol_g(M))^{(n-1)/n}. \]
These manifolds will feature in some of our subsequent constructions.

An insight originating from \cite{MantoulidisSchoen:bartnik} is that path-connectedness of the spaces in Definition \ref{defi:m.k.plus}, with $k=\tfrac12$, yields constructions that saturate the Riemannian Penrose Inequality \eqref{eq:rpi}, forcing \eqref{eq:rpi.bartnik} to be an equality in cases of interest.

Let us first explain the relevance of Definition \ref{defi:m.k.plus}, since it is not obvious. Consider any metric $g$ on $M$ for which $\cE_B(M, g, H=0) \neq \emptyset$. Let $(\bm{M}, \bm{g}) \in \cE_B(M, g, H=0)$. The boundary $(M, g) = \partial (\bm{M}, \bm{g})$ must be a strictly area-minimizing minimal hypersurface in $(\bm{M}, \bm{g})$ when $2 \leq n \leq 6$, otherwise there would exist an interior minimal hypersurface (with lesser area) contradicting the no-interior-minimal hypersurface requirement for $(\bm{M}, \bm{g})$. Using Lemma \ref{lemm:m.12.plus.galloway}, which revisits the proof of a subtle splitting theorem of Galloway \cite[Theorem 3.1]{Galloway:outermost.psc} who proved that $M$ must be topologically PSC, one gets the following refined conclusion:\footnote{Conclusion \eqref{eq:galloway.inclusion}  implies that $M$ is topologically PSC and $g \in \sM^{\geq 0}_{1/2}(M)$. See also Remark \ref{rema:bartnik.bray.closure.condition}.}
\begin{equation}
	\cE_B(M, g, H=0) \neq \emptyset \implies g \in \overline{\sM^{> 0}_{1/2}(M)}. \label{eq:galloway.inclusion}
\end{equation} 
In other words \eqref{eq:bartnik} is only nontrivial when $g \in \overline{\sM^{>0}_{1/2}(M)}$. 

Consider first the 2-dimensional case. Here, our orientable and topologically PSC $M$ is a finite collection of $\SS^2$'s. Only the connected case, $M = \SS^2$, is well-understood to date:
\begin{multline} \label{eq:bartnik.2.dim}
	g \in \overline{\sM^{> 0}_{1/2}(\SS^2)} \implies \cE_B(\SS^2, g, H=0) \neq \emptyset \text{ and }  \\
	\mathfrak{m}_{B}(\SS^2, g, H = 0) = \tfrac12 (\sigma_2^{-1} \area_g(\SS^2))^{1/2}.
\end{multline}
This is due to the second named author and Rick Schoen \cite{MantoulidisSchoen:bartnik} (the case $\sM^{>0}_{1/2}(\SS^2)$) and Chau--Martens \cite{ChauMartens:degenerate} (the borderline case). The proof requires:
\begin{itemize}
	\item the path-connectedness of $\overline{\sM^{>0}_{1/2}(\SS^2)}$, with path interiors in $\sM^{>0}_{1/2}(\SS^2)$, and 
	\item a sharp extension construction involving warping functions. 
\end{itemize}

In Section \ref{sec:psc.cobordances} we give a proof of the second construction in all dimensions, which applies whenever one has the appropriate path-connectedness result. Given Theorem \ref{theo:m.k.plus.path} (which guarantees path-connectedness of the relevant space), we can compute the Bartnik mass of connected 3-dimensional apparent horizons:

\begin{theo} \label{theo:bartnik.3.dim}
	Let $M$ be a closed connected orientable topologically PSC 3-manifold. Then, either:
	\begin{itemize}
		\item $\cE_{B}(M, g, H=0) = \emptyset$ for all $g \in \overline{\sM^{> 0}_{1/2}(M)}$, or
		\item $\cE_{B}(M, g, H=0) \neq \emptyset$ for all $g \in \overline{\sM^{> 0}_{1/2}(M)}$.
	\end{itemize}
	In the latter case, 
	\begin{equation} \label{eq:bartnik.3.dim.general}
		\mathfrak{m}_{B}(M, g, H=0) = \mathfrak{c}_B(M) \vol_g(M)^{2/3} \text{ for all } g \in \overline{\sM^{> 0}_{1/2}(M)},
	\end{equation}
	for a topological constant $\mathfrak{c}_B(M)$ that is $\geq \tfrac12 \sigma_3^{-2/3}$ by \eqref{eq:rpi}. Additionally:
	\begin{equation} \label{eq:bartnik.3.dim}
		\mathfrak{c}_B(\SS^3) = \tfrac12 \sigma_3^{-2/3}.
	\end{equation}
\end{theo}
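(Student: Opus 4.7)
I would prove Theorem \ref{theo:bartnik.3.dim} by combining the path-connectedness of $\sM^{\geq 0}_{1/2}(M)$ from Theorem \ref{theo:m.k.plus.path} with a Mantoulidis--Schoen-type collar construction lifted from 2-dim boundary (cf.~\cite{MantoulidisSchoen:bartnik, ChauMartens:degenerate}) to 3-dim boundary. The enabling observation is Lemma \ref{lemm:m.12.plus.equivalences}: $\sM^{\geq 0}_{1/2}(M)$ is exactly the class of induced metrics on 2-sided stable minimal 3-hypersurfaces in scalar-nonneg ambient 4-manifolds, so smooth paths in this space should give rise to scalar-nonnegative Riemannian cobordisms, which I will use as ``fat collars'' to interchange fill-ins.

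\textbf{Step 1 (collar from a path).} For any smooth path $\{g_t\}_{t \in [0,T]} \subset \sM^{>0}_{1/2}(M)$, I plan to construct on $[0,T] \times M$ a metric of warped-product form
\begin{equation*}
  \bar g = dt^2 + \phi(t)^2 \, \tilde g_t,
\end{equation*}
where $\tilde g_t = u_t^{\beta} g_t$ is a conformal modification of $g_t$ by the positive first eigenfunction $u_t$ of $-\Delta_{g_t} + \tfrac12 R_{g_t}$, with $\beta$ chosen so that the conformally-rescaled cross-sections and the warping $\phi(t)$ together yield $R_{\bar g} \geq 0$. With $\phi$ satisfying a Jacobi-type ODE driven by $\lambda_1(-\Delta_{g_t} + \tfrac12 R_{g_t})$, I would arrange $\{0\} \times M$ to be minimal and outer-minimizing, $\{T\} \times M$ to be a rescaling of $(M, g_T)$ with a prescribed small mean curvature, and no closed minimal hypersurface to appear in the interior.

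\textbf{Step 2 (dichotomy and formula).} Given $g_0, g_1 \in \sM^{\geq 0}_{1/2}(M)$ with $(\bm{M}, \bm{g}) \in \cE_B(M, g_0, H=0)$, I would use Theorem \ref{theo:m.k.plus.path} to produce a path between $g_1$ and $g_0$ in $\sM^{\geq 0}_{1/2}(M)$, perturb it into $\sM^{>0}_{1/2}(M)$ (using the mechanisms behind Theorems \ref{theo:m.k.plus.path.moduli} and \ref{theo:m.k.plus.surgery.soft}), apply the Step 1 collar, and glue its outer boundary to a rescaled copy of $(\bm{M}, \bm{g})$ along $(M, g_0)$ after corner-smoothing. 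This produces an element of $\cE_B(M, g_1, H=0)$, proving the dichotomy. The quantity $\mathfrak{m}_B(g)/\vol_g(M)^{2/3}$ is scale-invariant in the 4-dim asymptotically flat setting (ADM mass scales as $\lambda^2$ under $g \to \lambda^2 g$, matching $\vol^{2/3}$), and the collar's ADM-mass contribution can be made arbitrarily small by shrinking $T$ and absorbing rescaling into the attached fill-in. Combining these yields $\mathfrak{m}_B(g_1)/\vol_{g_1}(M)^{2/3} \leq \mathfrak{m}_B(g_0)/\vol_{g_0}(M)^{2/3}$; by symmetry the common value defines $\mathfrak{c}_B(M)$, and \eqref{eq:rpi} gives $\mathfrak{c}_B(M) \geq \tfrac12 \sigma_3^{-2/3}$.

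\textbf{Step 3 (round $\SS^3$) and main obstacle.} For the round metric on $\SS^3$ of volume $V$, the exterior Schwarzschild of $\RR^4$ with horizon area $V$ provides a fill-in saturating \eqref{eq:rpi}, giving $\mathfrak{c}_B(\SS^3) \leq \tfrac12 \sigma_3^{-2/3}$ and thus equality. For a general $g \in \sM^{\geq 0}_{1/2}(\SS^3)$, the minimizing sequence is produced by taking Step 2's attached fill-in to be exact Schwarzschild of mass $m_i \to \tfrac12 (\sigma_3^{-1} \vol_g(\SS^3))^{2/3}$. The principal difficulty lies in Step 1: making the collar's scalar-curvature sign persist along a path in $\sM^{\geq 0}_{1/2}$, particularly at metrics where $\lambda_1(-\Delta + \tfrac12 R) = 0$ and the Jacobi mechanism for $\phi$ degenerates, will likely require a careful limiting argument in the spirit of Chau--Martens \cite{ChauMartens:degenerate}, approximating by strict-inequality metrics via Theorem \ref{theo:m.k.plus.surgery.soft} before passing to the limit.
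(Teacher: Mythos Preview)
Your high-level architecture --- use Theorem \ref{theo:m.k.plus.path} to produce a path in $\sM^{\geq 0}_{1/2}(M)$, turn the path into a scalar-nonnegative collar on $M\times[0,1]$, glue it to a given extension, then let the rescaling parameter go to zero --- is exactly the paper's strategy. But your Step~1 ansatz is the part that actually does the work, and as written it does not.

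The metric $\bar g = dt^2 + \phi(t)^2 u_t^\beta g_t$ has induced slice metric $\phi(0)^2 u_0^\beta g_0$ at $t=0$, which equals $g_0$ only if $u_0$ is constant, i.e., only if $g_0$ has constant scalar curvature. More importantly, this ansatz does not convert the possibly-negative $R_{g_t}$ into the positive quantity $2\lambda_1(t)$ in the ambient scalar curvature. The paper (Propositions \ref{prop:m.k.plus.pairs.psc.minimal}, \ref{prop:m.k.plus.pairs.psc.mean.convex}, following \cite{MantoulidisSchoen:bartnik}) instead warps the \emph{normal} direction by the eigenfunction: $\bm{h} = (1+\eps\rho(t))\, g_{\tau(t)} + A^2 u(\cdot,\tau(t))^2\, dt^2$. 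Then the slicing formula (Lemma \ref{lemm:slicing.formula.curv}) gives $R_{\bm{h}} = 2\lambda_1(\tau(t))/(1+\eps\rho(t)) + O(A^{-2})$, and the slice metrics are genuinely $g_t$ up to the scalar factor $(1+\eps\rho(t))$. The factor $(1+\eps\rho)$ is what produces strict mean-convexity of the interior slices; it is also the source of the mass penalty $(1+\eps)^{(n-1)/2}$ --- your remark about ``shrinking $T$'' to control mass is not the mechanism.

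Two further points you are underestimating. First, preserving the no-interior-minimal-hypersurface condition \emph{after} gluing the collar to the extension is a substantial issue (Lemma \ref{lemm:gluing.no.minimal.af}); the paper uses a mean-convex level set flow argument to rule out new minimal surfaces near the interface. Standard Miao-type corner smoothing does not give this for free. Second, the degenerate endpoint $\lambda_1(g_L)=0$ is handled not by a limiting argument but via the Ricci-flow monotonicity $\tfrac{d}{dt}\lambda_1>0$ (Lemma \ref{lemm:li.monotonicity.consequence}, Proposition \ref{prop:m.k.plus.pairs} (4.a)), which lets one reparametrize $\tau(t)=\alpha t^2$ so that $(\tau')^2 \lesssim \lambda_1(\tau)$ near $t=0$; this is what keeps $R_{\bm{h}}>0$ at the degenerate end in Proposition \ref{prop:m.k.plus.pairs.psc.mean.convex}.
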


\begin{rema} \label{rema:sormani.lee.conjecture}
	The proof of \eqref{eq:bartnik.3.dim} is explicit enough to see that, whenever $M = \SS^3$, the minimizing sequence $(\bm{M}, \bm{g}_i)$ contains isometric copies of mass $m_i$ Schwarzschild manifolds outside a compact set, with $\lim_{i \to \infty} m_i = \tfrac12 (\sigma_3^{-1} \vol_g(M))^{2/3}$ and $\lim_{i \to \infty} \bm{g}_i = \bm{g}$ in $C^0$ for a piecewise-smooth $\bm{g}$ with the property that $(\bm{M}, \bm{g})$ contains an isometric copy of a mass $\tfrac12 (\sigma_3^{-1} \vol_g(M))^{2/3}$ Schwarzschild manifold up to its horizon attached to a compact PSC cylinder from a round $\SS^3$ to $(M, g)$ via a foliation of minimal spheres. The same is true in 2 dimensions. It was conjectured in \cite[Conjecture 5.9]{LeeSormani:near.equality} that minimizing sequences for non-round $(M, g)$ would converge, in a suitable weak sense, to Schwarzschild with an infinitely long neck. Our construction disproves this. Armando Cabrera Pacheco and Carla Cederbaum also observed a similar failure of the conjecture in January 2019 and their work is forthcoming.
\end{rema}

We emphasize that $M$ need not be $\SS^3$ in \eqref{eq:bartnik.3.dim.general}, and all apparent horizon metrics are exhausted in view of \eqref{eq:galloway.inclusion}. In the special case $M = \SS^3$, $g \in \sM^{>0}_{\infty}(M)$, the Bartnik mass of apparent horizons was previously computed by Cabrera Pacheco--Miao \cite{CabreraMiao:higher.dim} using Cod\'a Marques's connectedness result, which we generalized. It is of physical interest to determine whether minimizing extensions that yield \eqref{eq:bartnik.3.dim} can be arranged to be scalar-flat, i.e., vacuum initial data sets, as in Miao--Xie \cite{MiaoXie:vacuum.extensions}. It is also of interest to derive upper bounds for the Bartnik mass in case $M$ is disconnected. Our method here provides a non-explicit upper bound on the right hand side of \eqref{eq:bartnik.3.dim.general} that depends on the topology of $M$ and the individual areas of its components. One can likely derive explicit non-sharp upper bounds from the work of Carlotto--Schoen \cite{CarlottoSchoen:gluing}. We do not pursue this.

\subsection{A Bartnik--Bray mass}

Unfortunately, the precise value of the apparent horizon Bartnik mass remains unknown for:
\begin{itemize}
	\item disconnected 2- or 3-dimensional $M$;
	\item 3-dimensional $M$ other than $M = \SS^3$;
	\item all higher dimensional $M$, except for certain special metrics on $M = \SS^n$; see Cabrera Pacheco--Miao \cite{CabreraMiao:higher.dim}, Cabrera Pacheco--Cederbaum--Gehring--Pe\~{n}uela Diaz \cite{CabreraCederbaumGehringPenuela:preprint}.
\end{itemize}
While we do not have satisfactory answers for the Bartnik mass for these bullet points at this time, we know how to compute a relaxation of Bartnik's mass due to Bray \cite{Bray:penrose} in near-complete generality (see Remark \ref{rema:bartnik.bray.closure.condition}). We defer the precise definition of the Bartnik--Bray mass until Section \ref{sec:applications.relativity.bartnik.bray}, but state the main result here:

\begin{theo} \label{theo:bartnik.bray.n.dim}
	Let $M$ be a closed topologically PSC $n$-manifold. Consider the subset of $\overline{\sM^{> 0}_{1/2}(M)}$ given by:\footnote{See Remark \ref{rema:differentiating.lambda1} for more on differentiating $t \mapsto \lambda_1(-\Delta_{g_t} + \tfrac12 R_{g_t})$.}
	\begin{align} \label{eq:m.k.plus.bar}
		\operatorname{LinClos}[\sM^{>0}_{1/2}(M)] 
			& := \{ g \in \overline{\sM^{> 0}_{1/2}(M)} : \text{ there exists a smooth path} \\
			& \qquad \qquad \qquad [0, 1) \ni t \mapsto g_t \text{ with } g_0 = g \text{ and } \nonumber \\
			& \qquad \qquad \qquad \big[ \tfrac{d}{dt} \lambda_1(-\Delta_{g_t} + \tfrac12 R_{g_t}) \big]_{t=0} > 0 \}. \nonumber 
	\end{align}
	If $g \in \operatorname{LinClos}[\sM^{>0}_{1/2}(M)]$ and $\cE_{BB}(M, g, H=0) \neq \emptyset$, then
	\begin{equation} \label{eq:bartnik.bray.n.dim.general}
		\mathfrak{m}_{BB}(M, g, H=0) = \mathfrak{c}_{BB}(n) \vol_g(M)^{(n-1)/n}
	\end{equation}
	for some universal constant $\mathfrak{c}_{BB}(n)$ satisfying
	\begin{equation} \label{eq:bartnik.bray.n.dim}
		\mathfrak{c}_{BB}(n) \leq \tfrac12 \sigma_n^{-(n-1)/n}.
	\end{equation}
	If the outer-minimizing Riemannian Penrose Inequality holds\footnote{This is known to be true when $2 \leq n \leq 6$ in view of Bray--Lee \cite{Bray:penrose} but is expected to hold for all $n$ in view of the flexibility of the Bray--Lee argument and recently announced advances in minimal hypersurface theory (\cite{SchoenYau:higher.dimensional, Lohkamp:higher.dimensional}).} for $(M, g)$, then \eqref{eq:bartnik.bray.n.dim} gets upgraded to an equality. 
\end{theo}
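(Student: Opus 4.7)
The plan is to prove this theorem in three main steps: establish that the ratio $\mathfrak{m}_{BB}(M,g,H=0)/\vol_g(M)^{(n-1)/n}$ is scale- and diffeomorphism-invariant and reduce to strictly positive spectral data, construct extensions achieving near-optimal ADM mass via a spectral Mantoulidis--Schoen collar, and combine with the outer-minimizing RPI for the equality case. Under $g \mapsto \lambda^2 g$ both $\mathfrak{m}_{BB}$ and $\vol_g(M)^{(n-1)/n}$ rescale as $\lambda^{n-1}$, so the ratio is scale-invariant; it is also diffeomorphism-invariant. The hypothesis $g \in \operatorname{LinClos}[\sM^{>0}_{1/2}(M)]$ supplies a $C^1$ path into the strict interior, along which I would show upper semicontinuity of $\mathfrak{m}_{BB}$ by transplanting approximate extensions (Schwarzschild exteriors are robust under $C^0$-small perturbations of the inner boundary). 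This reduces the upper-bound portion to the case $g \in \sM^{>0}_{1/2}(M)$.

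For the main inequality, I would build a warped-product collar $\bm{g}_{\operatorname{col}} = dt^2 + \phi(t)^2 h(t)$ on $[0,T] \times M$, where $h(t)$ is a path of metrics on $M$ starting at $g$, with $\{0\} \times M$ isometric to $(M,g)$ and mean-convex so as to be outer-minimizing, and with $\{T\} \times M$ arranged to be (conformally) round after possibly a concluding Ricci-flow-like segment. The crucial ingredient is choosing the warping profile $\phi(t)$ using the first eigenfunction of $-\Delta_{h(t)} + \tfrac12 R_{h(t)}$, rather than $R_{h(t)}$ pointwise; this is the content of the $k=\tfrac12$ spectral generalization, and it produces $R_{\bm{g}_{\operatorname{col}}} \geq 0$ along the whole collar exactly because $h(t) \in \sM^{>0}_{1/2}(M)$. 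Once the outer boundary $\{T\} \times M$ is round (up to rescaling), I would attach an $(n+1)$-dimensional Schwarzschild exterior of mass $m_i$ slightly above $\tfrac12(\sigma_n^{-1} \vol_g(M))^{(n-1)/n}$ using the standard bending/smoothing step. The resulting manifold lies in $\cE_{BB}(M,g,H=0)$, and letting $m_i$ decrease to the Schwarzschild threshold yields $\mathfrak{m}_{BB}(M,g,H=0) \le \tfrac12(\sigma_n^{-1}\vol_g(M))^{(n-1)/n}$, which is \eqref{eq:bartnik.bray.n.dim}.

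The universality of $\mathfrak{c}_{BB}(n)$ follows because the construction of the previous paragraph depends only on $n$ and $\vol_g(M)$ through the final Schwarzschild mass, so $\mathfrak{c}_{BB}(n) := \inf_{(M,g)} \mathfrak{m}_{BB}/\vol^{(n-1)/n}$ is attained uniformly and satisfies the universal bound. When the outer-minimizing RPI is available (i.e.\ $2 \le n \le 6$ or higher-dimensional analogues), \eqref{eq:rpi} gives the matching lower bound $\mathfrak{m}_{ADM}(\bm{M},\bm{g}) \ge \tfrac12(\sigma_n^{-1}\vol_g(M))^{(n-1)/n}$ for every admissible extension, which pins $\mathfrak{c}_{BB}(n) = \tfrac12\sigma_n^{-(n-1)/n}$ and makes \eqref{eq:bartnik.bray.n.dim.general} an equality; the minimizing sequence announced in the theorem is precisely the family built above with Schwarzschild masses $m_i$.

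The main obstacle I expect is carrying out the collar construction with the \emph{spectral} rather than pointwise scalar-curvature hypothesis, which forces the warping profile to be tied to the first eigenfunction of $-\Delta_{h(t)} + \tfrac12 R_{h(t)}$ along the path and requires careful ODE analysis to ensure simultaneous control of $R_{\bm{g}_{\operatorname{col}}} \ge 0$, the outer-minimizing property, and the ability to glue to Schwarzschild at the outer end. A secondary difficulty is the degenerate boundary case in $\operatorname{LinClos}[\sM^{>0}_{1/2}(M)] \setminus \sM^{>0}_{1/2}(M)$: the positivity $\tfrac{d}{dt}|_{t=0}\lambda_1 > 0$ in \eqref{eq:m.k.plus.bar} must propagate through the collar uniformly as $t \downarrow 0$, so that the approximating extensions converge with ADM masses tending to the predicted value rather than blowing up.
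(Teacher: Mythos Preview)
Your proposal has a genuine gap that the paper's argument is specifically designed to circumvent. Your collar construction needs a smooth path $h(t)$ in $\sM^{>0}_{1/2}(M)$ from $g$ to a ``round'' metric so that you can cap with Schwarzschild. But the theorem is stated for an \emph{arbitrary} closed orientable topologically PSC $n$-manifold $M$: there is no round metric on $M$ unless $M\cong\SS^n$, and even when $M=\SS^n$ with $n\geq 4$ the paper does not establish path-connectedness of $\sM^{>0}_{1/2}(\SS^n)$ (Theorem~\ref{theo:m.k.plus.path} is three-dimensional). So your Step~2 simply does not go through in the generality claimed. Relatedly, your universality argument is circular: defining $\mathfrak{c}_{BB}(n)$ as an infimum over $(M,g)$ does not yield the \emph{equality} \eqref{eq:bartnik.bray.n.dim.general} for every $(M,g)$; you would still need to show that every admissible $(M,g)$ realizes that infimum.

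The paper's key idea is different and avoids high-dimensional path-connectedness entirely. It exploits the hypothesis $\cE_{BB}(M,g,H=0)\neq\emptyset$ as follows: take \emph{any} extension $(\bm{M},\bm{g})$ of $(M,g)$ and truncate it at a large coordinate sphere $\Sigma$ in the asymptotically flat end; do the same for an arbitrary target extension $(\bm{M}',\bm{g}')$ of any $(M',g')$ with the same boundary volume (e.g.\ Schwarzschild), obtaining a large sphere $\Sigma'$. Both $\Sigma,\Sigma'$ are nearly round with the same volume radius, so only the \emph{nearly-constant} path case of the PSC almost-cobordance (Remark~\ref{rema:m.k.plus.pairs.nearly.constant} and Proposition~\ref{prop:m.k.plus.pairs.psc.minimal}) is needed to bridge them---no global connectivity of $\sM^{>0}_{1/2}(M)$ is used. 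The pieces are then concatenated (with a short mean-convex collar near $(M,g)$ using the $\operatorname{LinClos}$ hypothesis and Proposition~\ref{prop:m.k.plus.pairs.psc.mean.convex}), and the outer-minimizing property is checked directly (Claim~\ref{clai:bartnik.bray.2.dim.disconnected.M.minimizing}) before smoothing via Lemma~\ref{lemm:bartnik.bray.truncated.miao}. This ``shielding'' construction (cf.\ Remark~\ref{rema:bartnik.bray.physicality}) necessarily produces larger-area minimal hypersurfaces between $(M,g)$ and infinity, which is why it lives in $\cE_{BB}$ but not $\cE_B$; your direct collar would not.
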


As in Remark \ref{rema:sormani.lee.conjecture}, the minimizing sequence consists of metrics that are mass $m_i$ Schwarzschild outside a compact set, $\lim_{i \to \infty} m_i = \tfrac12 (\sigma_n^{-1} \vol_g(M))^{(n-1)/n}$. Unlike the previous remark, however, our minimizing sequence produces minimal hypersurfaces outside $(M, g)$ with much larger area. This effectively shields the prescribed horizon within the apparent horizon of a more massive black hole. Therefore, this construction is not a candidate for the much more restrictive Bartnik mass setting of Theorem \ref{theo:bartnik.3.dim}. To the authors, this indicates that the two masses are not physically interchangeable.

We emphasize that $M$ need not be connected and that our computation is valid as long as a single Bartnik--Bray extension exists. See Lemma \ref{lemm:bartnik.bray.n.dim.extensions} for many examples of Bartnik--Bray extendible manifolds, i.e., those satisfying $\cE_{BB}(M, g, H=0) \neq \emptyset$. 

\begin{rema} \label{rema:bartnik.bray.closure.condition}
	What more can be said about the inclusions
	\begin{equation} \label{eq:bartnik.bray.closure.condition.three}
		\operatorname{LinClos}[\sM^{>0}_{1/2}(M)] \subset \overline{\sM^{> 0}_{1/2}(M)} \subset \sM^{\geq 0}_{1/2}(M)
	\end{equation}
	for closed topologically PSC $M$? 
	
	To begin with, note that running Ricci flow for a short amount of time and invoking Lemma \ref{lemm:li.monotonicity.consequence} implies that
	\begin{equation} \label{eq:bartnik.bray.closure.condition.3.minus.1}
		\sM^{\geq 0}_{1/2}(M) \setminus \operatorname{LinClos}[\sM^{>0}_{1/2}(M)] \subset \{ g \in \operatorname{Met}(M) : \Ric_{g} \equiv 0 \}.
	\end{equation}

	In particular, when $\dim M = 2$ or $3$, the spaces in \eqref{eq:bartnik.bray.closure.condition.three} coincide in view of the fact that topologically PSC $2$- or $3$-manifolds don't carry (Ricci-)flat metrics, forcing the right hand side of \eqref{eq:bartnik.bray.closure.condition.3.minus.1} to be empty.
	
	The situation is more subtle in higher dimensions.\footnote{We are grateful to the anonymous referee for providing us with an abundance of references for this discussion and answering questions of ours.} It is beneficial to understand the set differences 
	\[ \sM^{\geq 0}_{1/2}(M) \setminus \overline{\sM^{>0}_{1/2}(M)} \text{ and } \overline{\sM^{>0}_{1/2}(M)} \setminus \operatorname{LinClos}[\sM^{>0}_{1/2}(M)]  \]
	separately. First, we have:
	\begin{equation}\label{eq:bartnik.bray.closure.condition.3.minus.2}
	\begin{gathered} 
		\{ g \in \operatorname{Met}(M) : (\tilde M, \tilde g) \text{ carries a parallel spinor} \} \\
		\bigcap \\
		\sM^{\geq 0}_{1/2}(M) \setminus \overline{\sM^{>0}_{1/2}(M)} \\
		\bigcap \\
		\{ g \in \operatorname{Met}(M) : \Ric_{g} \equiv 0 \}, 
	\end{gathered}
	\end{equation}
	where, in the topmost set, $(\tilde M, \tilde g)$ denotes the universal covering of $(M, g)$. The second inclusion in \eqref{eq:bartnik.bray.closure.condition.3.minus.2} follows from \eqref{eq:bartnik.bray.closure.condition.3.minus.1}, while the first follows from \cite{AmmannKronckeWeissWitt} (see also \cite{Wang:preserving.parallel.spinors, DaiWangWei}) and specifically from the fact that metrics in the topmost set of \eqref{eq:bartnik.bray.closure.condition.3.minus.2} are Ricci-flat, and thus in $\sM^{\geq 0}_{1/2}(M)$, but by \cite[Corollary 3]{AmmannKronckeWeissWitt} are not limits of PSC metrics and by a simple continuity argument involving first eigenfunctions of the conformal Laplacian and \eqref{eq:m.k.plus.inclusion} they are not in $\overline{\sM^{>0}_{1/2}(M)}$.
	
	Note that, by Stolz's (\cite{Stolz:simply.connected}) deep characterization of the topologically PSC condition in simply connected manifolds, there are several examples in the topmost set in  \eqref{eq:bartnik.bray.closure.condition.3.minus.2} with $M$ being topologically PSC: all closed simply-connected manifolds with holonomy in $G_2$ or $\operatorname{SU}(4k+3)$ (see \cite[Proof of Proposition 5.2]{DaiWangWei}) and products of such manifolds with Ricci-flat manifolds with holonomy in $\{ 1\}$, $\operatorname{SU}$, $\operatorname{Sp}$, or $G_2$. 
	
	There are no known examples of metrics in the bottommost set of \eqref{eq:bartnik.bray.closure.condition.3.minus.2} that are not in the topmost set. This is a known open question.

	A similar argument yields 
	\begin{equation} \label{eq:bartnik.bray.closure.condition.2.minus.1}
	\begin{gathered}
		\overline{\sM^{> 0}_{1/2}(M)} \setminus \operatorname{LinClos}[\sM^{>0}_{1/2}(M)] \\
		\bigcap \\
		\{ g \in \operatorname{Met}(M) : \Ric_{g} \equiv 0 \} \setminus \{ g \in \operatorname{Met}(M) : (\tilde M, \tilde g) \text{ carries a parallel spinor} \}. 
	\end{gathered}
	\end{equation}
	As explained, it is an interesting open question whether the right hand side of \eqref{eq:bartnik.bray.closure.condition.2.minus.1} (and thus also the left, by inclusion) is ever nonempty when $\dim M \geq 4$.
\end{rema}

\subsection{Fill-in problem}

The tools we have developed make progress toward:

\begin{quest}[Gromov, \cite{Gromov:scalar.curvature.boundary.questions}] \label{ques:gromov}
	Which closed Riemannian 3-manifolds $(M, g)$ carry compact 4-dimensional fill-ins $(\bm{M}, \bm{g})$ with nonnegative scalar curvature and the mean curvature vectors on the boundary pointing to the interior?
\end{quest}

For orientable topologically PSC $M$ we prove that, as long as there exists at least one fill-in whose induced boundary metric belongs to $\sM^{\geq 0}_{1/2}(M)$\footnote{Recall that this class is larger than the class of positive scalar curvature metrics on the boundary, which is the most commonly studied one.}, then \textit{every} metric in $\sM^{\geq 0}_{1/2}(M)$ can be filled in with the \textit{same} fill-in topology:

\begin{theo} \label{theo:m.k.plus.fillin}
	Assume that $(\bm{M}, \bm{g}_0)$ is a compact Riemannian 4-manifold-with-boundary, with $\partial \bm{M} = M$ a topologically PSC orientable 3-manifold, satisfying:
	\begin{enumerate}
		\item $g_0 := \bm{g}_0 \restr M \in \sM^{\geq 0}_{1/2}(M)$;
		\item $R_{\bm{g}_0} \geq 0$ everywhere on $\bm{M}$; and,
		\item $M \subset (\bm{M}, \bm{g}_0)$ has strictly inward pointing mean curvature vector.
	\end{enumerate}
	Write $M = M_1 \sqcup \cdots \sqcup M_k$ for the connected components of $M$. Then, for every $g \in \sM^{\geq 0}_{1/2}(M)$ with $\vol_g(M_i) > \vol_{g_0}(M_i)$ for all $i = 1, \ldots, k$, and every $\bm{U} \subset \subset \bm{M} \setminus \partial \bm{M}$ open, there exists a metric $\bm{g}$ on $\bm{M}$ so that
	\begin{itemize}
		\item $\bm{g} \restr \bm{U} \equiv \bm{g}_0 \restr \bm{U}$; and 
		\item (1)-(3) hold with $g$, $\bm{g}$ in place of $g_0$, $\bm{g}_0$.
	\end{itemize}
	Moreover, if $(\bm{M}, \bm{g}_0)$ is known to also satisfy:
	\begin{enumerate}
		\item[(4)] $(\bm{M}, \bm{g}_0)$ contains no closed minimal hypersurfaces,
	\end{enumerate}
	then $(\bm{M}, \bm{g})$ can be taken so that, additionally,
	\begin{itemize}
		\item (4) holds too with $\bm{g}$ in place of $\bm{g}_0$.
	\end{itemize}
\end{theo}

The work of Carr \cite{Carr:psc.construction} guarantees that, for every $k \in \NN$, one can find standard $\#^k (\DD^3 \times \SS^1)'s$ embedded in $(\RR^4, \sum_{i=1}^4 dx_i^2)$ with boundary having a metric $g_0$ of positive induced scalar curvature. These are mean-convex by the maximum principle (see \cite{CaffarelliNirenbergSpruck, HuiskenSinestrari}) and thus do not contain minimal hypersurfaces. So, by Theorem \ref{theo:m.k.plus.fillin} above one can construct fill-ins for all $g \in \sM^{\geq 0}_{1/2}(\#^k (\SS^2 \times \SS^1))$ on the standard handlebody topology and with no interior minimal hypersurfaces.

While the restriction of only producing boundary metrics $g \in \sM^{\geq 0}_{1/2}(M)$ is undesirable for the purposes of Question \ref{ques:gromov}, it is key in allowing us to obtain our extra conclusion (4) above, which has specific geometric significance: such manifolds-with-boundary are indecomposable from the point of view of minimal surface theory; cf. the Meeks--Simon--Yau \cite{MeeksSimonYau} characterization of 3-dimensional handlebodies.

Recent work of Kazaras \cite[Theorem B]{Kazaras:desingularizing} on PSC bordisms guarantees that every PSC $(M, g_0)$ can be filled in with (1)-(3), so by Theorem \ref{theo:m.k.plus.fillin} every $g \in \sM^{\geq 0}_{1/2}(M)$ can also be filled in with (1)-(3) on the same background topology. It is not clear how many fill-ins in \cite{Kazaras:desingularizing} satisfy (4), but those of lens spaces are believed to (\cite{Kazaras:personal}).

Theorem \ref{theo:m.k.plus.fillin} above is a special case of our ``Monotone PSC almost-cobordance'' construction, which relies crucially on the flexibility of $\sM^{\geq 0}_{1/2}(M)$ that Theorem \ref{theo:m.k.plus.path} affords us. The 2-dimensional case of such a result was a crucial component of our study of singularities in PSC metrics in 3 dimensions  \cite{LiMantoulidis:skeleton} (see also \cite[Theorem A]{Kazaras:desingularizing} for a 4-dimensional analog that relates to the discussion above), and in previous work of the second author in mathematical general relativity and quasi-local mass \cite{MantoulidisMiao:total.mean.curvature, MantoulidisMiaoTam:capacity}. See also Jauregui \cite{Jauregui:fillins} and Jauregui--Miao--Tam \cite{JaureguiMiaoTam:fillins} for relativistic applications in case where the boundary metric is PSC.

\section{Proof of Theorems \ref{theo:m.k.plus.path.moduli}, \ref{theo:m.k.plus.path}, \ref{theo:m.k.plus.path.yamabe}} \label{sec:proof.of.connectedness}

The path-connectedness of $\sM^{>0}_k(M)$ will follow from Ricci flow with surgery. We show that the condition of being in $\sM^{>0}_k(M)$ is preserved under smooth Ricci flow as well as by the standard 3D Ricci flow surgery process (taking care to keep track of the topology when surgeries occur), and that we eventually arrive at an element of $\sM^{>0}_\infty(M)$. We then conclude by invoking \cite{Marques:deforming.psc} and \eqref{eq:m.k.plus.inclusion}. 

The case of $\sM^{\geq 0}_k(M)$ follows from this as well, as Ricci flow will immediately flow elements of $\sM^{\geq 0}_k(M)$ into $\sM^{>0}_k(M)$ in the relevant setting.

We start by observing the monotonicity of $\lambda_1(-\Delta_g + kR_g)$ under smooth Ricci flow. This is the only place in the argument where $k \geq \tfrac14$ is invoked.

\begin{lemm} \label{lemm:li.monotonicity.consequence}
	Suppose that $(M, g_t)_{t \in [0,T)}$ is a smooth Ricci flow on a closed $n$-manifold, $n \geq 2$. If $k \in [\tfrac14, \infty)$ and $g_0$ isn't Ricci-flat, then
	\begin{equation} \label{eq:li.monotonicity.consequence}
		\tfrac{d}{dt} \lambda_1(-\Delta_{g_t} + k R_{g_t}) > 0 \text{ for all } t \in [0, T).
	\end{equation}
\end{lemm}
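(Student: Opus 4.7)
The strategy is to recognize $\Lambda(t) := \lambda_1(-\Delta_{g_t} + kR_{g_t})$ as (one quarter of) the minimum of a Perelman-type $\cF$-functional and apply a monotonicity computation. Since $M$ is closed, $\Lambda(t)$ is a simple eigenvalue with positive first eigenfunction $u_t$, which I would normalize by $\int_M u_t^2\,d\mu_{g_t} = 1$ and write as $u_t = e^{-f_t/2}$. Setting $c := 4k \geq 1$, one has
\[
4\Lambda(t) = \inf\Bigl\{ \cF_c(g_t, f) : \int_M e^{-f}\,d\mu_{g_t} = 1 \Bigr\}, \quad \cF_c(g, f) := \int_M(|\nabla f|^2 + cR)e^{-f}\,d\mu,
\]
with $\cF_1$ being (a rescaling of) Perelman's $\lambda$-functional.

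For each $t_* \in [0,T)$ and each $t_1 > t_*$ slightly larger, take a minimizer $f_1$ of $\cF_c(g_{t_1},\cdot)$ and run the backward equation $\partial_t f = -\Delta f + |\nabla f|^2 - R$ to earlier times. Setting $w := e^{-f}$, this becomes the well-posed forward linear parabolic equation $\partial_s w = \Delta w - Rw$ in $s := t_1 - t$, which preserves $w > 0$; a short integration by parts then confirms that the constraint $\int e^{-f(t)}\,d\mu_{g_t} = 1$ is preserved along the coupled flow. The central identity is the monotonicity formula
\[
\frac{d}{dt}\cF_c(g_t, f(t)) = 2\int_M |\Ric + \Hess f|^2 e^{-f}\,d\mu + 2(c-1)\int_M |\Ric|^2 e^{-f}\,d\mu,
\]
which I would derive by decomposing $\cF_c = \cF_1 + (c-1)\int R e^{-f}\,d\mu$, quoting Perelman's formula for $\cF_1$, and computing $\tfrac{d}{dt}\int R e^{-f}\,d\mu = 2\int|\Ric|^2 e^{-f}\,d\mu$ directly from the Ricci flow evolutions of $R$ and $d\mu$. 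The right-hand side is manifestly nonneg exactly because $c \geq 1$.

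Since $f(t)$ is only a competitor for $t < t_1$ while $f(t_1) = f_1$ is the actual minimizer, integrating gives $4\Lambda(t_1) - 4\Lambda(t_*) \geq \int_{t_*}^{t_1}\tfrac{d}{d\tau}\cF_c(g_\tau, f(\tau))\,d\tau$. Dividing by $t_1 - t_*$ and letting $t_1 \downarrow t_*$---using smooth dependence of $w$ on its initial data and of the minimizer on the metric, so that $f(\tau)$ converges to the minimizer $f_*$ at $t_*$---yields the pointwise derivative bound
\[
4\Lambda'(t_*) \geq 2\int_M |\Ric + \Hess f_*|^2 e^{-f_*}\,d\mu_{g_{t_*}} + 2(c-1)\int_M |\Ric|^2 e^{-f_*}\,d\mu_{g_{t_*}}.
\]
This right-hand side vanishes only if $\Ric + \Hess f_* \equiv 0$ on $M$ and, when $c > 1$, also $\Ric \equiv 0$: for $c > 1$ this directly forces $g_{t_*}$ Ricci-flat, while for $c = 1$ it makes $(M, g_{t_*}, f_*)$ a closed steady gradient Ricci soliton, which is necessarily Ricci-flat by a standard closed-manifold argument (trace, then maximum principle on $R$). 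Either way, Kotschwar's backward uniqueness for Ricci flow on closed manifolds would propagate Ricci-flatness back to $g_0$, contradicting the hypothesis. Hence $\Lambda'(t_*) > 0$.

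The main technical hurdle is the generalized monotonicity formula above; the decomposition $\cF_c = \cF_1 + (c-1)\int R e^{-f}\,d\mu$ reduces the work beyond Perelman's original argument to a single integration by parts, and the hypothesis $k \geq \tfrac14$ (i.e., $c \geq 1$) enters precisely to guarantee the sign of the extra $(c-1)\int|\Ric|^2 e^{-f}\,d\mu$ term.
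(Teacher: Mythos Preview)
Your proof is correct and follows essentially the same route as the paper: the paper quotes Li's monotonicity formula for $\lambda_1(-\Delta+kR)$ under Ricci flow (which you re-derive via the decomposition $\cF_c = \cF_1 + (c-1)\int R\,e^{-f}\,d\mu$) and then, for the borderline case $k=\tfrac14$, invokes the fact that closed steady gradient Ricci solitons are Ricci-flat (citing Petersen--Wylie, where you give the direct maximum-principle argument). Your appeal to Kotschwar's backward uniqueness to propagate non-Ricci-flatness from $g_0$ to $g_{t_*}$ is an extra step the paper leaves implicit, since in its applications the derivative is only needed at $t=0$ or on manifolds (topologically PSC $3$-manifolds) that carry no Ricci-flat metric at all.
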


\begin{rema}[Smooth dependence of first eigenvalues and eigenfunctions] \label{rema:differentiating.lambda1}
	When $M$ is connected, the space of first eigenfunctions of $-\Delta_g + kR_g$ is one-dimensional. Given a smooth path of metrics $(g_t)_{t \in I}$ on $M$, with $I \subset \RR$ an interval, it follows from \cite[Lemma A.1]{MantoulidisSchoen:bartnik} that $t \mapsto \lambda_1(-\Delta_{g_t} + kR_{g_t})$ is smooth and that we can also choose a smooth path of functions $(u_t)_{t \in [0,1]}$ so that each $u_t$ is a first eigenfunction of $-\Delta_{g_t} + kR_{g_t}$ that is, e.g., positive.
	
	When $M$ is disconnected, $\lambda_1(-\Delta_g + k R_g)$ is the minimum of $\lambda_g((-\Delta_g + kR_g) \restr M_i)$ among all connected components $M_i \subset M$. Given a smooth path $t \mapsto g_t$, $\tfrac{d}{dt} \lambda_1(-\Delta_{g_t} + kR_{g_t})$ will be interpreted in the sense of forward difference quotients.
\end{rema}

\begin{proof}[Proof of Lemma \ref{lemm:li.monotonicity.consequence}]
	This is a consequence of \cite{Li:monotonicity.RF} (where ``$k$'' is to be replaced with ``$4k$'' in our setting), specifically \cite[Theorem 4.2]{Li:monotonicity.RF} and a modification of the argument in \cite[Theorem 5.2]{Li:monotonicity.RF}. We explain how below. Note that it suffices to give a proof for $t=0$, since non-Ricci-flat closed manifolds cannot flow into Ricci-flat ones under smooth Ricci flow (\cite[Theorem 1.1]{Kotschwar:ricci}).
	
	By Remark \ref{rema:differentiating.lambda1}, it suffices to assume that $M$ is connected. We choose, using the same remark, $(u_t)_{t \in [0,T)}$ to be a smooth path of positive eigenfunctions of $(-\Delta_{g_t} + k R_{g_t})_{t \in [0,T)}$. Fix $t \in (0, T)$. Solve the conjugate heat equation \cite[(2)]{Li:monotonicity.RF} backwards for $s \in [0,t]$:
	\begin{align*}
		\tfrac{\partial}{\partial s} f^{(t)}_s 
			& = -\Delta_{g_s} f^{(t)}_s - R_{g_s} + |\nabla_{g_s} f^{(t)}_s|^2 \text{ for } s \in [0,t], \\
		f^{(t)}_t & := -2 \log u_t \text{ (terminal condition).}
	\end{align*}
	This is doable because the equation is a backwards linear heat equation when written out in terms of $e^f$. Using the chain of inequalities in the proof of \cite[Theorem 5.2]{Li:monotonicity.RF}, and also \cite[Theorem 4.2]{Li:monotonicity.RF} together with the mean value theorem, it follows that there exists a constant $\theta^{(t)} \in (0, 1)$ such that
	\begin{align*}
		& \frac{\lambda_1(-\Delta_{g_t} + k R_{g_t}) - \lambda_1(-\Delta_{g_0} + k R_{g_0})}{t} \\
		&\qquad \geq 2(4k-1) \int_M |\Ric_{g_{\theta^{(t)} t}}|^2 \, d\mu_{g_{\theta^{(t)} t}} + 2 \int_M |\Ric_{g_{\theta^{(t)} t}} + \nabla^2_{g_{\theta^{(t)} t}} f^{(t)}_{\theta^{(t)} t}|^2 \, d\mu_{g_{\theta^{(t)} t}}.
	\end{align*}
	Using the smoothness of the Ricci flow up to $t=0$, we can send $t \to 0$ above to get
	\[ [ \tfrac{d}{dt} \lambda_1(-\Delta_{g_t} + kR_{g_t}) ]_{t=0} \geq 2(4k-1) \int_M |\Ric_{g_0}|^2 \, d\mu_{g_0} + 2 \int_M |\Ric_{g_0} + \nabla^2_{g_0} f^{(0)}_0|^2 \, d\mu_{g_{0}}. \]
	If $k > \tfrac14$, the result follows immediately from the strict positivity of the first integral above. If $k = \tfrac14$, one needs to investigate the second integral. This integral is positive too: its vanishing would imply that $(M, g_0)$ is a steady gradient Ricci soliton, which would imply that $g_0$ is Ricci-flat since $M$ is closed \cite[Theorem 1.5]{PetersenWylie}.
\end{proof}

In all that follows, we assume the setting of Theorem \ref{theo:m.k.plus.path.moduli}. Namely, $M$ is a closed, orientable, topologically PSC 3-manifold, and $k \in [\tfrac14, \infty)$. We have:

\begin{lemm} \label{lemm:m.k.plus.ricci.flow.positive}
	If $g \in \sM^{\geq 0}_{k}(M)$ and $(M, g_t)_{t \in [0,T)}$ is a smooth Ricci flow with $g_0 = g$, then
	\[ \tfrac{d}{dt} \lambda_1(-\Delta_{g_t} + k R_{g_t}) > 0 \text{ for all } t \in [0, T). \]
\end{lemm}
\begin{proof}
	This is a direct consequence of Lemma \ref{lemm:li.monotonicity.consequence} and the fact that closed topologically PSC 3-manifolds do not carry (Ricci-)flat metrics.
\end{proof}

\begin{lemm} \label{lemm:m.k.plus.finite.extinction}
	If $g \in \sM^{\geq 0}_{k}(M)$, then the Ricci flow with surgery starting at  $(M, g)$, with any choice of sufficiently small surgery parameters, becomes extinct in finite time and surgery times do not accumulate.
\end{lemm}
\begin{proof}
	This follows from Perelman's breakthroughs on Ricci flow with surgery. For finite-time extinction, see \cite[Theorem 1.1]{Perelman3} (see also \cite[Theorem 1.4]{ColdingMinicozzi:finite.time.extinction}, and \cite[Claim 3.7]{KleinerLott:notes} and the subsequent paragraph), recalling that closed orientable topologically PSC 3-manifolds have no aspherical factors their prime decomposition (\cite[Theorem E]{GromovLawson:summary}). For non-accumulation of singularities, see \cite[Claim 3.6]{KleinerLott:notes}.
\end{proof}

\begin{lemm} \label{lemm:m.k.plus.ricci.flow.surgery}
	If $g \in \sM^{\geq 0}_{k}(M)$ and $(M_t, g_t)_{t \in [0, T^*)}$ is a Ricci flow with surgery starting at $(M_0, g_0) = (M, g)$ with sufficiently small surgery parameters, then 
	\[ g_t \in \sM^{>0}_{k}(M_t) \text{ for all } t \in (0, T^*). \]
\end{lemm}
\begin{proof}
	Given Lemma \ref{lemm:m.k.plus.ricci.flow.positive}, we may suppose, without loss of generality, that
	\begin{equation} \label{eq:m.k.plus.ricci.flow.surgery.initial}
		\lambda_1(-\Delta_g + k R_g) \geq 2 \lambda > 0.
	\end{equation}
	It will suffice, then, to prove that 
	\begin{equation} \label{eq:m.k.plus.ricci.flow.surgery.goal}
		\lambda_1(-\Delta_{g_t} + k R_{g_t}) \geq \lambda \text{ for all } t \in [0, T^*),
	\end{equation}
	provided we choose sufficiently small surgery parameters. This follows essentially as in \cite[Section 93.12]{KleinerLott:notes}, who did the case $k = \tfrac14$. The case $k \in (\tfrac14, \infty)$ continues to enjoy the same necessary proof ingredients: eigenvalue monotonicity and coercivity (``Agmon-type,'' in the language of \cite{KleinerLott:notes}) estimates in the surgery region. We sketch a proof of the argument  for clarity.
	
	First, note that \eqref{eq:m.k.plus.ricci.flow.surgery.goal} is trivially true in the absence of surgeries, in view of Lemma \ref{lemm:m.k.plus.ricci.flow.positive}. So the point is to study drops of $\lambda_1(t) := \lambda_1(-\Delta_{g_t} + k R_{g_t})$ at surgery times $t = t_1, \ldots, t_Q \in [0, T^*)$. Arguing verbatim as in \cite[Section 93.12]{KleinerLott:notes} and replacing \cite[Lemmas 93.16, 93.21]{KleinerLott:notes} with Lemmas \ref{lemm:kleiner.lott.lambda}, \ref{lemm:kleiner.lott.agmon}, we find:
	\begin{equation} \label{eq:m.k.plus.ricci.flow.surgery.lambdagap.1}
		|\lambda_1^-(t_i) - \lambda_1^+(t_i)| \leq C h(t_i)^4 \text{ at every } i = 1, \ldots, Q,
	\end{equation}
	where
	\[ \lambda_1^+(t_i) = \lim_{t \to t_i^+} \lambda_1(t), \; \lambda_1^-(t_i) = \lim_{t \to t_i^-} \lambda_1(t) \]
	and $h : [0, T^*) \to \RR_+$ denotes the surgery scale parameter. Recall that Ricci flow with surgery discards regions with volume $\geq C h(t_i)^3$, so
	\begin{equation} \label{eq:m.k.plus.ricci.flow.surgery.lambdagap.2}
		|\lambda_1^-(t_i) - \lambda_1^+(t_i)| \leq C h(t_i) (V^-(t_i) - V^+(t_i))
	\end{equation}	
	where
	\[ V^+(t_i) = \lim_{t \to t_i^+} \vol_{g_t}(M_t), \; V^-(t_i) = \lim_{t \to t_i^-} \vol_{g_t}(M_t). \]
	Combining \eqref{eq:m.k.plus.ricci.flow.surgery.lambdagap.1} and \eqref{eq:m.k.plus.ricci.flow.surgery.lambdagap.2} yields, for all $\tau \in (0, T^*)$:
	\begin{equation} \label{eq:m.k.plus.ricci.flow.surgery.lambdagap.3}
		\sum_{i : t_i \leq \tau} |\lambda_1^-(t_i) - \lambda_1^+(t_i)| \leq C (\sup_{[0, \tau]} h) \sum_{i : t_i \leq \tau} (V^-(t_i) - V^+(t_i)).
	\end{equation}
	It is crucial to point out that $C$ does not depend on $\tau$. We only introduce $\tau$ to facilitate our estimation of the volume-drop series on the right hand side of \eqref{eq:m.k.plus.ricci.flow.surgery.lambdagap.3}. We compute for all $t \in [0, T^*) \setminus \{ t_1, \ldots, t_Q \}$:
	\begin{align}
		- \tfrac{d}{dt} \vol_{g_t}(M_t) 
			& = \int_{M_t} R_{g_t} \, d\mu_{g_t} \nonumber \\
			& = k^{-1} \int_{M_t} (|\nabla_{g_t} 1|^2 + k R_{g_t} 1^2) \, d\mu_{g_t} \nonumber \\
			& \geq k^{-1} \lambda_1(t) \vol_{g_t}(M_t); \label{eq:m.k.plus.ricci.flow.surgery.lambdagap.4}
	\end{align}
	Thus, the volume of $(M_t, g_t)$ decreases monotonically (even across surgeries, where it decreases by $\geq C h(t_i)^3$) as long as $\lambda_1(t) \geq 0$. Combining \eqref{eq:m.k.plus.ricci.flow.surgery.lambdagap.3} with \eqref{eq:m.k.plus.ricci.flow.surgery.lambdagap.4}, the sum on the right side of \eqref{eq:m.k.plus.ricci.flow.surgery.lambdagap.3} telescopes, so
	\begin{equation} \label{eq:m.k.plus.ricci.flow.surgery.lambdagap.5}
		\sum_{i : t_i \leq \tau} |\lambda_1^-(t_i) - \lambda_1^+(t_i)| \leq C (\sup_{[0, \tau]} h) \vol_{g_0}(M_0)
	\end{equation}
	provided $\lambda_1(t) \geq 0$ on $[0, \tau]$. In particular, by choosing $h$ globally sufficiently small (as \cite[Section 93.12]{KleinerLott:notes} is free to do) we can arrange for the right hand side of \eqref{eq:m.k.plus.ricci.flow.surgery.lambdagap.5} to be $\leq \lambda$. Thus, this allows us to take $\tau \to T^*$ and conclude that  \eqref{eq:m.k.plus.ricci.flow.surgery.goal} holds, as claimed.
\end{proof}

\begin{proof}[Proof of Theorem \ref{theo:m.k.plus.path.moduli}]
	It will benefit us to first recall the strategy of \cite{Marques:deforming.psc}, where this result was proven (with different notation) for $k = \infty$. We start with an arbitrary $g \in \sM^{>0}_\infty(M)$ and isotope it through $\sM^{>0}_\infty(M)$ to a metric belonging to a smaller class of ``model'' metrics\footnote{Elements of $\mathfrak{M}(M)$ are called ``canonical'' metrics in \cite{Marques:deforming.psc}.} $\mathfrak{M}(M)$ on $M$ (its definition is not important for us), whose quotient $\mathfrak{M}(M)/\operatorname{Diff}_+(M)$ is path-connected (see \cite[p. 841-2]{Marques:deforming.psc}). 
	
	We use the same strategy, except we can now rely on the path-connectedness of $\sM^{>0}_{\infty}(M)/\operatorname{Diff}_+(M)$ and \eqref{eq:m.k.plus.inclusion}: it suffices to isotope an arbitrary $g \in \sM^{\geq 0}_k(M)$ to a positive scalar curvature metric; the result then follows from \cite{Marques:deforming.psc} and \eqref{eq:m.k.plus.inclusion}.
	
	The isotopy is provided by 3D Ricci flow \textit{with surgery} starting with $(M, g)$. Short-time existence for the smooth Ricci flow with initial data $(M, g)$ was established by Hamilton \cite{Hamilton:ricci}, and  long-time existence for the Ricci flow with surgery, with sufficiently small surgery parameters, was established by Perelman \cite{Perelman1, Perelman2, Perelman3}; see also \cite{KleinerLott:notes} for a detailed exposition. We know that the Ricci flow with surgery starting off $(M, g)$ becomes extinct in finite time and only undergoes finitely many surgeries (Lemma \ref{lemm:m.k.plus.finite.extinction}), and that at any given positive time our flow consists of $\sM^{>0}_{k}$ pieces (Lemma  \ref{lemm:m.k.plus.ricci.flow.surgery}). 
	
	We prove the existence of the isotopy through $\sM^{>0}_k(M)$ from $g$ to a metric in $\sM^{>0}_\infty(M)$ by induction on the number of surgeries. Since we will need to work on several different manifolds (due to surgery), we will  emphasize the manifolds on which our isotopies occur; for example, instead of saying $g$ is isotopic to $g'$, we will say $(M, g)$ is isotopic to $(M, g')$. 
	
	\textbf{Base case} (no surgeries). In this case, our Ricci flow $(M, g_t)_{t \in [0,T)}$ with $g_0 = g$ is smooth, becomes extinct as $t \to T$, and $(M, g_t)$ is covered by canonical neighborhoods (which have positive scalar curvature) as $t \to T$; we direct the reader to the top of \cite[p. 822]{Marques:deforming.psc} for the precise definition of these neighborhoods (see also \cite[Definition 69.1]{KleinerLott:notes}). Let $\varsigma > 0$ be sufficiently small so that $(M, g_{T-\varsigma})$ is covered by canonical neighborhoods. Then the Ricci flow provides the isotopy through $\sM^{>0}_k(M)$ from $(M, g) = (M, g_0)$ to $(M, g_{T-\varsigma})$, and the result follows since $g_{T-\varsigma} \in \sM^{>0}_\infty(M)$.
	
	\textbf{Inductive step} ($Q \geq 1$ surgeries). In this case, we have a Ricci flow with surgery $(M_t, g_t)_{t \in [0,T)}$ starting at $(M_0, g_0) = (M, g)$, with surgery times $0 < t_1 < \ldots < t_Q < T$, and which becomes extinct as $t \to T$. Now denote by $(M_-, g_-)$, $(M_+, g_+)$ the pre- and post-surgery manifolds at the surgery time $t=t_1$. (Thus, $M_- = M$.) By virtue of \eqref{eq:m.k.plus.ricci.flow.surgery.goal}, we may assume that
	\begin{equation} \label{eq:m.k.plus.path.lambda.lower.bound}
		\lambda_1(-\Delta + k R) \geq 2 \lambda > 0 \text{ for both } (M_-, g_-), (M_+, g_+).
	\end{equation}
	Viewing $(M_+, g_+)$ as an initial manifold that also evolves by Ricci flow with $Q-1$ surgeries, by the inductive hypothesis, $(M_+, g_+)$ is isotopic through $\sM^{>0}_k(M_+)$ to some $(M_+, h_+)$ with $h_+ \in \sM^{>0}_{\infty}(M_+)$. Therefore, by the connected-sum result for continuous families (Corollary \ref{coro:m.k.plus.surgery.families}), the Gromov--Lawson connected-sum of the components of $(M_+, g_+)$ and any discarded components at the surgery (which all are in $\sM^{>0}_\infty \subset \sM^{>0}_k$, the inclusion by \eqref{eq:m.k.plus.inclusion}) is isotopic through $\sM^{>0}_k(M_-)$ to some $(M_-, h_-)$ with $h_- \in \sM^{>0}_{\infty}(M_-)$. Moreover, this Gromov--Lawson connected sum of the components of $(M_+, g_+)$ and any discarded components is also isotopic through $\sM^{>0}_{k}(M_-)$ to $(M_-, g_-)$ by virtue of the reconstruction lemma below (Lemma \ref{lemm:m.k.plus.reconstruction}). Finally, $(M, g_-)$ is isotopic through $\sM^{>0}_{k}(M)$ to $(M, g_0) = (M, g)$ via the smooth Ricci flow itself. This completes the proof.
\end{proof}

\begin{lemm}[Reconstruction lemma] \label{lemm:m.k.plus.reconstruction}
	Assume the setup above. Then, the pre-surgery manifold is isotopic to the Gromov--Lawson connected sum of the components of the post-surgery manifold and any discarded components, through metrics with $\lambda_1(-\Delta + kR) \geq \lambda$,  with $\lambda$ as in \eqref{eq:m.k.plus.path.lambda.lower.bound}.
\end{lemm}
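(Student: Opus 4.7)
The plan is to exhibit an isotopy from the pre-surgery metric $g_-$ to the Gromov--Lawson reconstruction $g'_-$ that is supported in a small neighborhood of the surgery spheres, and to control $\lambda_1(-\Delta + kR)$ throughout by appealing to the family version of our $\sM^{>0}_k$-preserving surgery theorem.

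First I would localize the difference between $g_-$ and $g'_-$. At the surgery time $t_1$, the Hamilton--Perelman procedure identifies a finite collection of disjoint $\delta$-necks $N_i \subset M_-$, each diffeomorphic to $S^2 \times (-\delta^{-1}, \delta^{-1})$ and $\delta$-close at scale $h(t_1)$ to a round cylinder, cuts each along a central sphere $\Sigma_i$, discards the high-curvature side, and glues in a standard cap. The discarded components in the statement of the lemma are the connected components of $M_-$ that were entirely covered by canonical neighborhoods and thus removed whole. The Gromov--Lawson connected sum along each $\Sigma_i$ produced by Theorem \ref{theo:m.k.plus.surgery.soft} is supported in a thin tubular neighborhood $U_i \subset N_i$ of $\Sigma_i$. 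Choosing the $U_i$ narrowly, both $g_-$ and $g'_-$ agree on $M_- \setminus \bigcup_i U_i$, and on each $U_i$ they are both $C^2$-close, after rescaling by $h(t_1)^{-1}$, to the standard round cylinder $S^2 \times I$ of radius $1$.

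Next I would build the interpolation. The classical Gromov--Lawson bend-and-glue construction admits a natural one-parameter family of cap profiles parameterized by a bending parameter $s \in [0,1]$: at $s=0$ the profile is trivial and reproduces $g_-\restr U_i$, while at $s=1$ the profile is the full Gromov--Lawson cap and reproduces $g'_-\restr U_i$. Extending by the common metric outside $U_i$ yields a continuous family $g(s)$, $s \in [0,1]$, of metrics on $M_-$ joining $g_-$ to $g'_-$.

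The crux is verifying $\lambda_1(-\Delta_{g(s)} + k R_{g(s)}) \geq \lambda$ for every $s \in [0,1]$, and this is the main obstacle, since $\lambda_1$ is a global spectral quantity that is a priori sensitive to local modifications. I would invoke Corollary \ref{coro:m.k.plus.surgery.families}, the family version of Theorem \ref{theo:m.k.plus.surgery.soft}, applied to the family $g(s)$. At its core the estimate relies on the Agmon-type coercivity of Lemma \ref{lemm:kleiner.lott.agmon}: any drop in $\lambda_1$ caused by the modification inside $U_i$ is controlled by $Ch(t_1)^{4}$, exactly as in \eqref{eq:m.k.plus.ricci.flow.surgery.lambdagap.1}, because an eigenfunction cannot concentrate in the thin cylindrical region $U_i$ without paying a gradient cost of order $h(t_1)^{-2}$ on a region of volume $O(h(t_1)^3)$. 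Choosing the surgery parameters sufficiently small absorbs this drop into the initial gap $2\lambda$ in \eqref{eq:m.k.plus.path.lambda.lower.bound}, yielding $\lambda_1(-\Delta_{g(s)} + k R_{g(s)}) \geq \lambda$ all along the isotopy and completing the proof.
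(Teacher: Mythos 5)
Your high-level outline (localize the modification, interpolate via a bending family, control $\lambda_1$ by an Agmon-type estimate) is the right one, but the eigenvalue-control step has a genuine gap. You invoke Corollary \ref{coro:m.k.plus.surgery.families} and quote the $Ch(t_1)^4$ drop bound from \eqref{eq:m.k.plus.ricci.flow.surgery.lambdagap.1} as if they applied to your interpolating family $g(s)$, but neither does. The $Ch^4$ bound in Claim \ref{clai:m.k.plus.ricci.flow.surgery} compares the pre- and post-surgery eigenvalues across a single surgery event, and it is derived by showing that \emph{both} are close to the Dirichlet eigenvalue of $-\Delta + kR$ on the common unaffected region $X$. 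Corollary \ref{coro:m.k.plus.surgery.families}'s eigenvalue conclusion (3) controls $\lambda_1$ of a Gromov--Lawson connected sum of two metrics; your family $g(s)$ is not a family of such connected sums (in particular $g(0)=g_-$ is not one), so conclusion (3) does not apply. (In fact, the paper explicitly notes that it does \emph{not} use Corollary \ref{coro:m.k.plus.surgery.families}'s (3) here.)

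What is missing is the quantitative input that makes the Agmon estimate usable throughout the interpolation. Lemma \ref{lemm:kleiner.lott.agmon} requires $|\nabla f|^2 \leq kR - \lambda_1 - c$ on the support of the cutoff, so you need $kR$ to dominate $\lambda_1(\mu)+1$ on the surgery region \emph{for every} $\mu$. This is arranged in the paper's proof in two steps you skip: (i) the $\eps$-tube $U$ is chosen so that the region $V$ containing the surgery has $R \geq 2 c_2 r(t_1)^{-2}$ for a suitably large $c_2$, and the interpolation is built to preserve $R \geq c_2 r(t_1)^{-2}$ on $V$ — which needs the \emph{strongly positive} PSC isotopy of Lemma \ref{lemm:psc.round.foliations.isotopy} (a quantitative scalar lower bound, not merely positivity of the Gromov--Lawson bend); (ii) a Rayleigh-quotient test function supported in a tube near $\partial\Omega$ inside $X$ gives the $\mu$-independent upper bound $\lambda_1(\mu) \leq c_3 r(t_1)^{-2}$, and then $c_2$ is chosen large relative to $c_3$ and $k$. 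Only then does the Kleiner--Lott argument show that every $\lambda_1(\mu)$ is close to the $\mu$-independent Dirichlet eigenvalue on $X$, which in turn is close to $\lambda_1$ of $(M_\pm, g_\pm) \geq 2\lambda$. Your phrase "an eigenfunction cannot concentrate \ldots\ without paying a gradient cost of order $h(t_1)^{-2}$" is the right intuition, but without establishing the uniform scalar curvature bound on $V$ along the isotopy, the Agmon inequality cannot be applied, and the conclusion $\lambda_1(\mu) \geq \lambda$ does not follow.
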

\begin{proof}
	Recall that the pre-surgery and post-surgery manifolds are denoted $(M_-, g_-)$, $(M_+, g_+)$, and they satisfy \eqref{eq:m.k.plus.path.lambda.lower.bound}. Without real loss of generality, we will assume that no components get discarded by surgery. The general case is a simple modification.  
	
	We introduce the following notation following \cite[Section 57.2]{KleinerLott:notes} and \cite[Section 93.12]{KleinerLott:notes} adapted to our setting:
	\begin{itemize}
		\item $\eps$, $r(t_1)$, $\delta(t_1)$, and $h(t_1)$ are the Ricci flow surgery parameters at the surgery time $t_1$;
		\item $(M_{\operatorname{cap}}, g_{\operatorname{cap}}) := (M_+, g_+) \setminus (M_-, g_-)$ is the cap region in $M_+$;
		\item $(X, g_X) := \overline{(M_+, g_+) \cap (M_-, g_-)}$ is the region unaffected by the surgery;
		\item $\Omega \subset M_-$ is the $\eps$-horn inside which the surgery is performed and which is extended until $R_{g_-} \sim r(t_1)^{-2}$.
	\end{itemize}
	The construction of Ricci flow with surgery guarantees that
	\begin{equation} \label{eq:m.k.plus.reconstruction.scal.cap}
		R \geq 2 c_1 \delta(t_1)^{-2} r(t_1)^{-2} \text{ on } (M_{\operatorname{cap}}, g_{\operatorname{cap}}),
	\end{equation}
	for a universal $c_1 \in \RR$. By Proposition \ref{prop:m.k.plus.surgery} we can perform a connected-sum operation at the tips of $(M_{\operatorname{cap}}, g_{\operatorname{cap}})$ to obtain $(M_{\operatorname{cap}}^{\operatorname{GL}}, g_{\operatorname{cap}}^{\operatorname{GL}})$ satisfying
	\begin{equation} \label{eq:m.k.plus.reconstruction.scal.gl}
		R \geq c_1 \delta(t_1)^{-2} r(t_1)^{-2} \text{ on } (M_{\operatorname{cap}}^{\operatorname{GL}}, g_{\operatorname{cap}}^{\operatorname{GL}}).
	\end{equation}
	Note that $M_- \approx X \cup_\sim M_{\operatorname{cap}}^{\operatorname{GL}}$ where $\sim$ identifies the common boundary points along $X$, $M_{\operatorname{cap}}^{\operatorname{GL}}$, with suitable orientation.
	
	\textbf{Step 1} (isotopy with scalar curvature control). Following \cite[p. 2838]{KleinerLott:notes}, let $U \subset \Omega$ be an $\eps$-tube within the $\eps$-horn $\Omega$ ($\subset M_-$), whose center has scalar curvature
	\begin{equation} \label{eq:m.k.plus.reconstruction.scal.tube}
		R \sim 3 c_2 r(t_1)^{-2},
	\end{equation}
	where $c_2 \in \RR$ is a large constant to be determined. Such an $\eps$-tube exists by the surgery construction, as long as the surgery parameter $\delta(t_1)$ (see \eqref{eq:m.k.plus.reconstruction.scal.gl}) is sufficiently small depending on $c_2$. Note that there are infinitely many such $\eps$-tubes $U$. Among them, we choose one that is closest to the surgery region. Thus, if we call $V$ the connected component of $\Omega \setminus U$ containing the surgery region, it follows by definition, \eqref{eq:m.k.plus.reconstruction.scal.tube}, and the surgery construction that
	\begin{equation} \label{eq:m.k.plus.reconstruction.scal.inside.tube}
		R \geq 2 c_2 r(t_1)^{-2} \text{ on } (V, (g_X \sqcup g_{\operatorname{cap}}^{\operatorname{GL}}) \restr V).
	\end{equation}
	At this point, we follow \cite[Section 6]{Marques:deforming.psc} to construct an isotopy
	\[ [0,1] \ni \mu \mapsto g_\mu \in \operatorname{Met}(M_-) = \operatorname{Met}(X \cup_\sim M_{\operatorname{cap}}^{\operatorname{GL}}) \] 
	satisfying $g_0 \equiv g_-$ and $g_1 \equiv g_X \sqcup g_{\operatorname{cap}}^{\operatorname{GL}}$ and
	\begin{enumerate}
		\item $\mu \mapsto g_\mu$ is constant everywhere on $M_- \setminus V$;
		\item $R \geq c_2 r(t_1)^{-2}$ on $(V, g_\mu \restr V)$ for all $\mu \in [0,1]$.
	\end{enumerate}
	The reference applies verbatim as long as we replace the positive scalar curvature isotopy in \cite[Proposition 3.3]{Marques:deforming.psc} with our strongly positive scalar curvature isotopy from Lemma \ref{lemm:psc.round.foliations.isotopy}, and the regular Gromov--Lawson connected-sum construction for families in \cite[Proposition 6.1]{Marques:deforming.psc} with Corollary \ref{coro:m.k.plus.surgery.families}. Note that we have used Corollary \ref{coro:m.k.plus.surgery.families}'s (2) to derive conclusion (2) above, and nowhere do we use Corollary \ref{coro:m.k.plus.surgery.families}'s (3).
	
	\textbf{Step 2} ($\lambda_1(-\Delta+kR)$ along the isotopy). First, by the Rayleigh quotient characterization of the first eigenvalue, applied with a test function supported on another $\eps$-tube $U'$ near $\partial \Omega$ that is fully contained in $X$ (and thus does not intersect $V$), we have:
	\begin{equation} \label{eq:m.k.plus.reconstruction.lambda.upper}
		\lambda_1(\mu) \leq c_3 r(t_1)^{-2} \text{ on } (M_-, g_{\mu}) \text{ for every } \mu \in [0,1],
	\end{equation}
	where $\lambda_1(\mu)$ denotes $\lambda_1(-\Delta+kR)$ of $(M_-, g_{\mu})$ and the constant $c_3 \in \RR$ is universal. In particular, if we choose $c_2$ sufficiently large depending on $c_3$ and on $k$, we will have
	\begin{equation} \label{eq:m.k.plus.reconstruction.scal.lambda}
		kR \geq \lambda_1(\mu) + 1 \text{ on } (V, g_\mu \restr V) \text{ for all } \mu \in [0,1],
	\end{equation}
	by conclusion (2) of Step 1. The control of $\lambda_1(\mu)$ now follows as in \cite[p. 2838-2839]{KleinerLott:notes} with Lemma \ref{lemm:kleiner.lott.agmon} in place of \cite[Lemma 93.21]{KleinerLott:notes}. The reference's proof carries through verbatim to show that every $\lambda_1(\mu)$ is close to the first Dirichlet eigenvalue of $-\Delta + kR$ on $(X, g_X)$, which is independent of $\mu$ and close to $\lambda_1(-\Delta + kR)$ of both $(M_{\pm}, g_{\pm})$, so $\lambda_1(\mu) \geq \lambda$ for all $\mu \in [0,1]$ by \eqref{eq:m.k.plus.path.lambda.lower.bound}.
\end{proof}

We proceed to Theorems \ref{theo:m.k.plus.path} and \ref{theo:m.k.plus.path.yamabe}, the proofs of both of which invoke  \cite{BamlerKleiner:contractibility}. 

\begin{proof}[Proof of Theorem \ref{theo:m.k.plus.path}]
	Let $g_\infty \in \sM^{>0}_{\infty}(M)$ be a fixed auxiliary metric on $M$, and let $g \in \sM^{\geq 0}_{k}(M)$ be arbitrary. It suffices to show the existence of a continuous path through $\sM^{>0}_k(M)$ from $g$ to $g_\infty$. This is done in two steps. First, there exists a continuous path through $\sM^{>0}_k(M)$ from $g$ to $\psi^* g_\infty$, for some $\psi \in \operatorname{Diff}_+(M)$. This is due to Theorem \ref{theo:m.k.plus.path.moduli}. Second, there exists a continuous path through $\sM^{>0}_\infty(M) \subset \sM^{>0}_k(M)$ (the inclusion by \eqref{eq:m.k.plus.inclusion}) from $\psi^* g_\infty$ to $g_\infty$. This is the content of \cite{BamlerKleiner:contractibility}.
\end{proof}

For Theorem \ref{theo:m.k.plus.path.yamabe}, in an earlier version of the paper we relied on the now well-understood theory of the long-time behavior of the Yamabe flow following the deep work of Brendle \cite{Brendle:yamabe.low.dim} to prove that $\sM_{1/8}^{>0}(M)$ is contractible. After a talk of the second author at Oberwolfach, Bernd Ammann indicated that the contractibility of the space of conformal factors alone suffices for proving Theorem \ref{theo:m.k.plus.path.yamabe}. We present this simpler argument here:

\begin{proof}[Proof of Theorem \ref{theo:m.k.plus.path.yamabe}]
	Without loss of generality, $M$ is connected. In the disconnected case, we concatenate our null-homotopies across components of $M$.
	
	We show that $\sM^{>0}_{1/8}(M)$, $\sM^{\geq 0}_{1/8}(M)$ are weakly contractible, i.e., that their homotopy groups vanish. So, fix a homotopy dimension $k \in \NN$ and consider any continuous map
	\[ \SS^k \ni \theta \mapsto g_{\theta,0} \in \sM^{>0}_{1/8}(M) \; (\sM^{\geq 0}_{1/8}(M)). \]
	For each $\theta \in \SS^k$, let $u_\theta \in C^\infty(M)$ be the unique positive first eigenfunction of $-\Delta_{g_\theta} + \tfrac18 R_{g_\theta}$ with unit $L^2(d\mu_{g_\theta})$ norm. Then, consider the continuous family\footnote{$\theta \mapsto u_\theta \in C^\infty(M)$ is continuous when $\theta \mapsto g_\theta \in \operatorname{Met}(M)$ is because, e.g., $\theta \mapsto \lambda_1(-\Delta_{g_\theta} + \tfrac18 R_{g_\theta})$ is continuous (see \cite[Chapter IV, \S 3.5]{Kato:perturbation.theory}, or the proof of \cite[Lemma A.1]{MantoulidisSchoen:bartnik})}
	\[ g_{\theta,t} := \big[ (1-t) + t u_\theta \big]^4 g_\theta \text{ for all } (\theta, t) \in \SS^k \times [0,1]. \]
	For each $t \in [0,1]$, $g_{\theta,t}$ and $g_\theta$ are in the same conformal class, so by Lemma \ref{lemm:m.k.plus.equivalences} 
	\[ g_{\theta,t} \in \sM^{>0}_{1/8}(M) \; (\sM^{\geq 0}_{1/8}(M)) \text{ for all } (\theta, t) \in \SS^k \times [0,1]. \]
	Moreover, at $t=1$ we have by \eqref{eq:scalar.curvature.conformal} and our choice of $u_\theta$ that
	\[ g_{\theta,1} \in \sM^{>0}_{\infty}(M) \; (\sM^{\geq 0}_{\infty}(M)) \text{ for all } \theta \in \SS^k. \]
	Since $M$ is topologically PSC, we can run Ricci flow starting at $g_{\theta, 1}$, $\theta \in \SS^k$, for a small uniform time $\eps > 0$ to get a continuous family
	\[ g_{\theta,t} \in \sM^{>0}_{\infty}(M) \text{ for all } (\theta, t) \in \SS^k \times [1, 1+\eps]. \]
	In particular, $\theta \mapsto g_{\theta,1+\eps}$ is an element of $\pi_k(\sM^{>0}_{\infty}(M))$. By Bamler--Kleiner \cite{BamlerKleiner:contractibility}, this element is null-homotopic in $\sM^{>0}_{\infty}(M)$. Concatenating homotopies, $\theta \mapsto g_{\theta,0}$ is null-homotopic in $\sM^{>0}_{1/8}(M)$ ($\sM^{\geq 0}_{1/8}(M)$). This completes the proof that $\sM^{>0}_{1/8}(M)$ and $\sM^{\geq 0}_{1/8}(M)$ are weakly contractible. This also implies the contractibility of $\sM^{>0}_{1/8}(M)$ by Whitehead's theorem, since this weakly contractible space has the homotopy type of a CW complex by \cite[Corollary 1]{Milnor:cw.complex}, being an open subset of the separable and metrizable manifold $\operatorname{Met}(M)$.
\end{proof}

\section{PSC almost-cobordance tools} \label{sec:psc.cobordances}

We will need a refinement of the existence result of Theorem \ref{theo:m.k.plus.path}. 

\begin{prop}[Fundamental path, 3D] \label{prop:m.k.plus.pairs}
	Suppose that $M$ is as in Theorem \ref{theo:m.k.plus.path} and that $g_L, g_R \in \sM^{\geq 0}_k(M)$ with $k \in [\tfrac14, \infty)$. 
	
	Then, there exists a smooth path $(g^o_t)_{t \in [0,1]}$ of metrics on $M$ with:
	\begin{enumerate}
		\item $g^o_0 = g_L$;
		\item $g^o_1 = g_R$;
		\item $g^o_t \in \sM^{>0}_{k}(M)$ for all $t \in (0, 1)$.
		\item[(4.a)] $\tfrac{d}{dt} \lambda_1(-\Delta_{g^o_t} + k R_{g^o_t}) > 0$ at $t=0$;
		\item[(4.b)] $\tfrac{d}{dt} \lambda_1(-\Delta_{g^o_t} + k R_{g^o_t}) < 0$ at $t=1$.
	\end{enumerate}
\end{prop}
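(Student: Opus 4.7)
The plan is to build $(g_t)_{t \in [0,1]}$ as a smooth concatenation of three pieces: a short Ricci flow segment starting from $g_L$ on $[0, a]$, a bridge through $\sM^{>0}_k(M)$ on $[a, 1-a]$, and a time-reversed short Ricci flow segment ending at $g_R$ on $[1-a, 1]$, for some small $a \in (0, \tfrac12)$. For the two outer pieces, short-time existence of the Ricci flow combined with Claim \ref{clai:m.k.plus.ricci.flow.positive} (when $k < \infty$), or the strong maximum principle for $\partial_t R = \Delta R + 2|\Ric|^2$ (when $k = \infty$), ensures the flowed metrics enter $\sM^{>0}_k(M)$ for positive time; the topologically PSC hypothesis on $M$ rules out Ricci-flat metrics and activates the strict inequalities. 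For the middle piece, the plan is to invoke the already-established Theorem \ref{theo:m.k.plus.path} to obtain a smooth path in $\sM^{>0}_k(M)$ joining the post-Ricci-flow endpoints $g_L(\eps)$ and $g_R(\eps)$.

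To secure (4.a) and (4.b) when $k \in [\tfrac14, \infty)$, I will reparametrize the outer segments as $g(t) := g_L(\sigma(t))$ on $[0, a]$ and $g(t) := g_R(\tilde\sigma(1 - t))$ on $[1-a, 1]$, where $\sigma, \tilde\sigma \colon [0, a] \to [0, \eps]$ are smooth functions satisfying $\sigma(0) = 0$, $\sigma'(0) = 1$, $\sigma \equiv \eps$ on a neighborhood of $a$, and likewise for $\tilde\sigma$. The condition $\sigma'(0) = 1$ forces $\tfrac{d}{dt} g(t)|_{t=0} = -2\Ric_{g_L}$, so Lemma \ref{lemm:li.monotonicity.consequence} delivers $\tfrac{d}{dt} \lambda_1(-\Delta_{g_t} + k R_{g_t})|_{t=0} > 0$, i.e.\ (4.a). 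The chain rule on the reversed segment produces (4.b) with the opposite sign.

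The remaining task is smoothness of the concatenation at the joins $t = a$ and $t = 1-a$. By the flattening property of $\sigma, \tilde\sigma$, the outer segments are locally equal to the constants $g_L(\eps)$ and $g_R(\eps)$ near the joins, so if the middle path from Theorem \ref{theo:m.k.plus.path} is further reparametrized (via a standard cutoff vanishing to all orders at the endpoints) to be constantly $g_L(\eps)$ near $a$ and constantly $g_R(\eps)$ near $1-a$, the concatenation is automatically $C^\infty$, lies entirely in $\sM^{>0}_k(M)$ on $(0, 1)$, and meets (1)-(3). The essentially only delicate point -- the main obstacle, such as it is -- is the simultaneous constraint on $\sigma$: it must flatten at $a$ for smooth concatenation yet retain $\sigma'(0) = 1$ at $0$ to keep (4.a) alive, and symmetrically for $\tilde\sigma$. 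For $k = \infty$, conditions (4.a) and (4.b) are vacuous, so no such delicate choice is required and any smooth concatenation suffices.
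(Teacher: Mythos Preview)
Your approach---short-time Ricci flow at each end to enter $\sM^{>0}_k(M)$ and secure (4.a)/(4.b) via Claim \ref{clai:m.k.plus.ricci.flow.positive}, a bridge through $\sM^{>0}_k(M)$ in the middle, then reparametrize and smooth at the joins---matches the paper's strategy. The one issue is a circular invocation: in the paper, Proposition \ref{prop:m.k.plus.pairs} is precisely the vehicle by which Theorem \ref{theo:m.k.plus.path} gets proved (the section opens with ``Theorem \ref{theo:m.k.plus.path} is implied by the following stronger proposition''), so appealing to Theorem \ref{theo:m.k.plus.path} for your middle segment is self-referential as the paper is organized.

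The paper avoids this by building the bridge from more primitive ingredients: Theorem \ref{theo:m.k.plus.path.moduli} (the moduli-space result, already proved in Section \ref{sec:proof.of.connectedness} via Ricci flow with surgery) runs each endpoint all the way to a canonical metric in $\sM^{>0}_\infty(M)$, and then Bamler--Kleiner's path-connectedness of $\sM^{>0}_\infty(M)$ bridges those canonical metrics on $[\tfrac13,\tfrac23]$. This is exactly the content you are invoking, just unpacked; if you replace your call to Theorem \ref{theo:m.k.plus.path} with this two-step bridge, the circularity disappears and the rest of your argument (the endpoint reparametrizations with $\sigma'(0)=1$, the flattening at the joins, the smoothing of the middle path inside the open set $\sM^{>0}_k(M)$) goes through unchanged.
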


\begin{rema}[Fundamental path, 2D] \label{rema:m.k.plus.pairs.2.dim}
	Proposition \ref{prop:m.k.plus.pairs} remains valid for closed orientable topologically PSC 2-manifolds $M$ and all $k \in (0, \infty)$. 
	
	Conclusions (1)-(3) follow from Proposition \ref{prop:m.k.2.dim.connected}. Conclusions (4.a), (4.b) follow as in \cite[Lemma 2.1]{ChauMartens:degenerate}. 
\end{rema}

\begin{rema}[Fundamental path, all dimensions] \label{rema:m.k.plus.pairs.nearly.constant}
	Proposition \ref{prop:m.k.plus.pairs} remains valid for closed $n$-manifolds $M$ ($n \geq 3$) and all $k \in (0, \infty)$ under sufficiently restrictive hypotheses, such as:
	\begin{enumerate}
		\item[(i)] $g_L, g_R \in \sM^{>0}_{k}(M)$ are sufficiently close to rescalings of one another, or
		\item[(ii)] $g_L$, $g_R \in \operatorname{LinClos}[\sM^{>0}_{k}(M)]$ coincide up to scaling.
	\end{enumerate}
	See Remark \ref{rema:bartnik.bray.closure.condition} for ways in which the proposition can otherwise fail.
\end{rema}

\begin{proof}[Proof of Proposition \ref{prop:m.k.plus.pairs}]
	The construction takes three steps.
	
	\textbf{Step 1} (constructing $(g^o_t)_{t \in [0,1/3]}$). First run smooth Ricci flow starting at $g^o_0 = g_L$ for some short amount of time $t \in [0, \varepsilon]$, $\varepsilon < \tfrac13$. By Lemma \ref{lemm:m.k.plus.ricci.flow.positive}, this smooth path already satisfies conclusions (1), (3), (4.a) for $t \in [0,\varepsilon]$. We then invoke Theorem \ref{theo:m.k.plus.path.moduli} to extend the path continuously to the time interval $t \in [\varepsilon,\tfrac13]$ so that $g^o_{1/3} \in \sM^{>0}_{\infty}(M)$ and $g^o_t \in \sM^{>0}_k(M)$ for all $t \in [\varepsilon,\tfrac13]$. Thus, conclusions (1), (3), (4.a) have been arranged to hold for $t \in [0, \tfrac13]$, and $g^o_{1/3} \in \sM^{>0}_\infty(M)$.
	
	\textbf{Step 2} (constructing $(g^o_t)_{t \in [2/3,1]}$). Run the same process as Step 1, except starting at $g_R$ and ultimately reparametrizing time by $t \mapsto 1-t$ to obtain $(g^o_t)_{t \in [2/3,1]}$ that satisfies conclusions (2), (3), (4.b) for $t \in [\tfrac23, 1]$, and with $g^o_{2/3} \in \sM^{>0}_{\infty}(M)$.

	\textbf{Step 3} (constructing $(g^o_t)_{t \in [1/3,2/3]}$). Note that $g^o_{1/3}, g^o_{2/3} \in \sM^{>0}_{\infty}(M)$ and that this space is path-connected by the recent breakthrough of Bamler--Kleiner \cite{BamlerKleiner:contractibility}, so we can extend our construction to a continuous $(g^o_t)_{t \in [0,1]}$ with $g^o_t \in \sM^{>0}_{\infty}(M)$ for all $t \in [\tfrac13, \tfrac23]$, obtaining (3) in full.

	The result follows by smoothing for $t \in [\tfrac12 \varepsilon, 1-\tfrac12 \varepsilon]$ as in \cite[Section 2]{CabreraMiao:higher.dim} and recalling the inclusion in \eqref{eq:m.k.plus.inclusion}.
\end{proof}

\begin{prop}[Volume normalization and Moser twist] \label{prop:m.k.plus.pairs.twist}
	Suppose that $M$ is a closed connected $n$-manifold and that $(g^o_t)_{t \in [0,1]}$ is a smooth path of metrics on $M$. 
	
	Then, there exists a smooth path $(\sigma_t)_{t \in [0,1]}$ of positive scalars and a smooth path $(\psi_t)_{t \in [0,1]}$ of diffeomorphisms of $M$ isotopic to the identity ($\psi_t \in \operatorname{Diff}_0(M)$), so that:
	\begin{enumerate}
		\item $\psi_0 \equiv \operatorname{Id}$ and $\sigma_0 = 1$;
		\item the Riemannian volume form induced on every slice 
			\[ M \times \{t\} \subset (M \times [0,1], \sigma_t \psi_t^* g^o_t + dt^2) \]
			is $t$-independent.
	\end{enumerate}
	If $g_t := \sigma_t \psi_t^* g_t^o$, we say that $(g_t)_{t \in [0,1]}$ is obtained from $(g_t^o)_{t \in [0,1]}$ by normalizing volume and performing a Moser twist.
\end{prop}

\begin{rema} \label{rema:m.k.plus.pairs}
	Item (2) above is equivalent to the statement:
	\begin{equation*}
		\text{every } M \times \{t\} \text{ is a stable minimal hypersurface in } (M \times [0,1], \sigma_t \psi_t^* g_t + dt^2).
	\end{equation*}
	Indeed, if all $M \times \{t\}$ are (stable) minimal, then their volume form is constant by the first variation formula. Conversely, if their volume form is constant, then all $M \times \{t\}$ are minimal by the first variation formula and (degenerate-)stable by the second variation formula.
\end{rema}

\begin{proof}[Proof of Proposition \ref{prop:m.k.plus.pairs.twist}]
	We fix $\sigma_t$ once and for all by requiring that
	\[ \tfrac{d}{dt} \vol_{\sigma_t g^o_t}(M) \equiv 0 \text{ for } t \in [0,1]. \]
	We determine $\psi_t$ using Moser's trick as in \cite[Lemma 1.2]{MantoulidisSchoen:bartnik}. We recall the argument here for the reader's convenience. For all $t \in [0,1]$,
	\begin{equation} \label{eq:m.k.plus.pairs.twist.rhs}
		\int_M \left( \tfrac12 \Tr_{\sigma_t g^o_t} \tfrac{d}{dt} (\sigma_t g^o_t) \right) \, d\mu_{\sigma_t g^o_t} = \tfrac{d}{dt} \vol_{\sigma_t g^o_t}(M) = 0,
	\end{equation}
	so the divergence equation 
	\begin{equation} \label{eq:m.k.plus.pairs.twist}
		\Div_{\sigma_t g^o_t} X_t = - \tfrac12 \Tr_{\sigma_t g^o_t} \tfrac{d}{dt} (\sigma_t g^o_t)
	\end{equation}
	is smoothly solvable in time.\footnote{After normalizing by a constant, we can smoothly in $t$ solve $\Delta_{\sigma_t g^o_t} f_t = - \tfrac12 \Tr_{\sigma_t g^o_t} \tfrac{d}{dt} (\sigma_t g^o_t)$ in view of \eqref{eq:m.k.plus.pairs.twist.rhs} and elliptic regularity. Then, take $X_t := \nabla_{\sigma_t g^o_t} f_t$.} Then, setting $\psi_t$ as the integral flow of the vector field $X_t$ we get:
	\begin{align*}
		\tfrac{d}{dt} d\mu_{\sigma_t \psi_t^* g^o_t} 
			& = \tfrac{d}{dt} (\psi_t^* d\mu_{\sigma_t g^o_t}) = \psi_t^* \left( \tfrac{d}{dt} d\mu_{\sigma_t g^o_t} + \mathcal{L}_{X_t} d\mu_{\sigma_t g^o_t} \right) \\
			& = \psi_t^* \left( \tfrac12 \Tr_{\sigma_t g^o_t} \tfrac{d}{dt} (\sigma_t g^o_t) + \Div_{\sigma_t g^o_t} X_t \right) \, d\mu_{\sigma_t g^o_t} = 0,
	\end{align*}
	where the last equation follows from \eqref{eq:m.k.plus.pairs.twist}. 
\end{proof}

We can now construct our monotone PSC almost-cobordances:

\begin{prop}[Monotone PSC almost-cobordance, I] \label{prop:m.k.plus.pairs.psc.minimal}
	Suppose that $M$ is a closed connected $n$-manifold and that $g_L, g_R \in \sM^{>0}_{1/2}(M)$ have $\vol_{g_L}(M) = \vol_{g_R}(M)$. 
	
	Let $(g_t)_{t \in [0,1]}$ be obtained by applying Proposition \ref{prop:m.k.plus.pairs.twist} to a smooth path of metrics $(g^o_t)_{t \in [0,1]}$ on $M$ that satisfies Proposition \ref{prop:m.k.plus.pairs}'s (1)-(3) with $g_L$, $g_R$, $k = \tfrac12$.

	Then, there exists a metric $\bm{h}$ on $\bm{N} := M \times [0,1]$ such that:
	\begin{enumerate}
		\item $\bm{h} \restr (M \times \{0\}) = g_L$;
		\item $\bm{h} \restr (M \times \{1\}) = \psi^* g_R$ for some $\psi \in \operatorname{Diff}_0(M)$;
		\item $R_{\bm{h}} > 0$ on $\bm{N}$;
		\item $M \times \{t\} \subset (\bm{N}, \bm{h})$ is stable minimal for all $t \in [0,1]$.
	\end{enumerate}
	One may, in fact, set $\bm{h} = g_t + A^2 u_t^2 dt^2$ where:
	\begin{itemize}
		\item $(u_t)_{t \in [0,1]}$ is a smooth path of first eigenfunctions of $(-\Delta_{g_t} + \tfrac12 R_{g_t})_{t \in [0,1]}$,
		\item $A \in (0, \infty)$ is sufficiently large depending on $(g_t)_{t \in [0,1]}$, $(u_t)_{t \in [0,1]}$.
	\end{itemize}
\end{prop}
\begin{proof} 
	This was carried out in \cite{MantoulidisSchoen:bartnik} in a narrower setting and with more tedious computations, so we redo a simpler proof. 
	
	See Remark \ref{rema:differentiating.lambda1} for the existence of $(u_t)_{t \in [0,1]}$. Note that our ansatz for $\bm{h}$ already satisfies conclusions (1)-(2) by Proposition \ref{prop:m.k.plus.pairs}'s (1)-(2) and Proposition \ref{prop:m.k.plus.pairs.twist}'s (1)-(2). It remains to check conclusions (3)-(4). Lemma \ref{lemm:slicing.formula.curv} gives the relationship 
	\begin{equation} \label{eq:m.k.plus.pairs.psc.minimal.sff}
		\bar{\sff}_t = \frac{1}{A u_t} \sff_t
	\end{equation}
	between the second fundamental form $\bar{\sff}_t$ of $M \times \{t\} \subset (M \times [0,1], \bm{h})$ and $\sff_t$ of $M \times \{t\} \subset (M \times [0,1], g_t + dt^2)$. Likewise, we have the relationship
	\begin{equation} \label{eq:m.k.plus.pairs.psc.minimal.meancurv}
		\bar{H}_t = \frac{1}{A u_t} H_t \equiv 0
	\end{equation}
	between the mean curvature $\bar{H}_t$ of $M \times \{t\} \subset (M \times [0,1], \bm{h})$ and $H_t$ of $M \times \{t\} \subset (M \times [0,1], g_t + dt^2)$, where the last equality uses Proposition \ref{prop:m.k.plus.pairs.twist}'s (2), forcing $H_t \equiv 0$. This implies conclusion (4); see Remark \ref{rema:m.k.plus.pairs}.
	
	Proposition \ref{prop:m.k.plus.pairs}'s (1)-(3) imply $\lambda_1(-\Delta_{g^o_t} + \tfrac12 R_{g^o_t}) > 0$ for all $t \in [0, 1]$ and thus
	\begin{equation} \label{eq:m.k.plus.pairs.psc.minimal.lambda}
		\lambda_1(-\Delta_{g_t} + \tfrac12 R_{g_t}) > 0 \text{ for all } t \in [0, 1],
	\end{equation}
	since the sign of $\lambda_1(-\Delta + \tfrac12 R)$ is invariant under rescaling and diffeomorphisms of the underlying metric. Writing $\lambda_1(t) := \lambda_1(-\Delta_{g_t} + \tfrac12 R_{g_t})$ for brevity, Lemma \ref{lemm:slicing.formula.curv} gives:
	\begin{equation} \label{eq:m.k.plus.pairs.minimal.scal}
		R_{\bm{h}}|_{M \times \{t\}} = 2 \lambda_1(t) - 2 A^{-1} u_t^{-1} \tfrac{\partial}{\partial t} \bar{H}_t - \bar{H}_t^2 - |\bar{\sff}_t|^2,
	\end{equation}
	For large $A$, \eqref{eq:m.k.plus.pairs.psc.minimal.sff}, \eqref{eq:m.k.plus.pairs.psc.minimal.meancurv}, \eqref{eq:m.k.plus.pairs.psc.minimal.lambda} tell us that the first term of the right hand side of \eqref{eq:m.k.plus.pairs.minimal.scal} dominates the rest, thus implying conclusion (3). 
\end{proof}

\begin{rema}
	The reason we call the construction above an ``almost'' cobordance is because we cannot prescribe both boundary metrics: we have to allow for a pullback by an element of $\operatorname{Diff}_0(M)$ on one side.
\end{rema}

We will also be interested in the following variant of the previous result, where we produce mean-convex foliations of the interior rather than minimal foliations.

\begin{prop}[Monotone PSC almost-cobordance, II] \label{prop:m.k.plus.pairs.psc.mean.convex}
	Suppose that $M$ is a closed connected $n$-manifold and that $g_L, g_R \in \sM^{\geq 0}_{1/2}(M)$ have $\vol_{g_L}(M) = \vol_{g_R}(M)$. 
	
	Let $(g_t)_{t \in [0,1]}$ be obtained by applying Proposition \ref{prop:m.k.plus.pairs.twist} to a path of metrics $(g^o_t)_{t \in [0,1]}$ on $M$ satisfying:
	\begin{itemize}
		\item Proposition \ref{prop:m.k.plus.pairs}'s (1)-(3) with $g_L$, $g_R$, $k = \tfrac12$,
		\item Proposition \ref{prop:m.k.plus.pairs}'s (4.a) if $g_L \not \in \sM^{>0}_{1/2}(M)$, and
		\item Proposition \ref{prop:m.k.plus.pairs}'s (4.b) if $g_R \not \in \sM^{>0}_{1/2}(M)$.
	\end{itemize}
	Then, for every $\eps > 0$, there exists a metric $\bm{h}$ on $\bm{N} := M \times [0,1]$ such that:
	\begin{enumerate}
		\item $\bm{h} \restr (M \times \{0\}) = g_L$;
		\item $\bm{h} \restr (M \times \{1\}) = (1+\eps) \psi^* g_R$, for some $\psi \in \operatorname{Diff}_0(M)$;
		\item $R_{\bm{h}} > 0$ on $\bm{N} \setminus (M \times \{0\})$;
		\item $R_{\bm{h}}|_{M \times \{0\}} > 0$ unless $g_L \not \in \sM^{>0}_{1/2}(M)$;
		\item $M \times \{t\} \subset (\bm{N}, \bm{h})$ has mean curvature vectors pointing strictly toward $M \times \{0\}$ for all $t \in (0, 1]$;
		\item $M \times \{0\} \subset (\bm{N}, \bm{h})$ is stable minimal.
	\end{enumerate}
	One may, in fact, set $\bm{h} = (1 + \eps \rho(t)) g_{\tau(t)} + A^2 u_{\tau(t)}^2 dt^2$, where:
	\begin{itemize}
		\item $(u_t)_{t \in [0,1]}$ is a smooth path of first eigenfunctions of $(-\Delta_{g_t} + \tfrac12 R_{g_t})_{t \in [0,1]}$,
		\item $A \in (0, \infty)$ is sufficiently large depending on $(g_t)_{t \in [0,1]}$, $(u_t)_{t \in [0,1]}$, $\eps$.
		\item $\rho, \tau : [0,1] \to [0,1]$ are smooth increasing bijections of $[0,1]$ to itself depending on $(g_t)_{t \in [0,1]}$, $(u_t)_{t \in [0,1]}$, $\eps$.
	\end{itemize}
\end{prop}
\begin{proof}
	See Remark \ref{rema:differentiating.lambda1} for the existence of $(u_t)_{t \in [0,1]}$. Our ansatz for $\bm{h}$ already satisfies conclusions (1)-(2) by Proposition \ref{prop:m.k.plus.pairs}'s (1)-(2) and Proposition \ref{prop:m.k.plus.pairs.twist}'s (1)-(2). It remains to check conclusions (3)-(6). We will require that:
	\begin{equation} \label{eq:m.k.plus.pairs.psc.mean.convex.rho}
		\rho'(0) = 0 \text{ and } \rho'(t) > 0 \text{ for all } t \in (0,1];
	\end{equation}
	\begin{equation} \label{eq:m.k.plus.pairs.psc.mean.convex.tau}
		\tau'(t) > 0 \text{ for all } t \in (0,1).
	\end{equation}
	Lemma \ref{lemm:slicing.formula.curv} gives the relationship
	\begin{align} \label{eq:m.k.plus.pairs.psc.mean.convex.sff}
		\bar{\sff}_t 
			& = \frac{\eps \rho'(t) g_{\tau(t)} + (1 + \eps \rho(t)) [\tfrac{d}{dt} g_{t}]_{t=\tau(t)} \tau'(t)}{2 A u_{\tau(t)}} \nonumber \\
			& = \frac{\tfrac12 \eps \rho'(t) g_{\tau(t)} + (1 + \eps \rho(t)) \tau'(t) \sff_{\tau(t)}}{A u_{\tau(t)}}
	\end{align}
	between the second fundamental form $\bar{\sff}_t$ of $M \times \{t\} \subset (M \times [0,1], \bm{h})$ and $\sff_{\tau(t)}$ of $M \times \{\tau(t)\} \subset (M \times [0,1], g_t + dt^2)$. Likewise, we have the relationship
	\begin{equation} \label{eq:m.k.plus.pairs.psc.mean.convex.meancurv}
		\bar{H}_t = \frac{\tfrac12 \eps \rho'(t) \Tr_{g_{\tau(t)}} g_{\tau(t)} + (1+\eps\rho(t)) \tau'(t) H_{\tau(t)}}{(1+\eps \rho(t)) A u_{\tau(t)}} =  \frac{n \eps \rho'(t)}{2 (1+\eps \rho(t)) A u_{\tau(t)}} 
	\end{equation}
	between the mean curvature $\bar{H}_t$ of $M \times \{t\} \subset (M \times [0,1], \bm{h})$ and $H_{\tau(t)}$ of $M \times \{\tau(t)\} \subset (M \times [0,1], g_t + dt^2)$, where the last equality makes use of Proposition \ref{prop:m.k.plus.pairs.twist}'s (2), which forces $H_{\tau(t)} \equiv 0$. Conclusions (5)-(6) follow from \eqref{eq:m.k.plus.pairs.psc.mean.convex.rho},  \eqref{eq:m.k.plus.pairs.psc.mean.convex.meancurv}.

	It remains to control the scalar curvature, which by Lemma \ref{lemm:slicing.formula.curv} is:
	\begin{equation} \label{eq:m.k.plus.pairs.psc.mean.convex.scal}
		R_{\bm{h}}|_{M \times \{t\}} = \frac{2 \lambda_1(\tau(t))}{1 + \eps \rho(t)} - \frac{2 \tfrac{\partial}{\partial t} \bar{H}_t}{A u_{\tau(t)}} - \bar{H}_t^2 - |\bar{\sff}_t|^2,
	\end{equation}
	where $\lambda_1(t) := \lambda_1(-\Delta_{g_{t}} + \tfrac12 R_{g_{t}})$ and $|\bar{\sff}_t|^2$ is the squared norm of $\bar{\sff}_t$ with respect to $(1+\eps \rho(t)) g_{\tau(t)}$. Using $H_{\tau(t)} \equiv 0$ again, \eqref{eq:m.k.plus.pairs.psc.mean.convex.sff} implies:
	\begin{equation} \label{eq:m.k.plus.pairs.psc.mean.convex.sff.sq}
		|\bar{\sff}_t|^2 = \frac{\tfrac14 n \eps^2 \rho'(t)^2 + (1+\eps\rho(t))^2 \tau'(t)^2 |\sff_{\tau(t)}|^2}{(1+\eps \rho(t))^2 A^2 u_{\tau(t)}^2},
	\end{equation}
	where $|\sff_{\tau(t)}|^2$ is squared norm of $\sff_{\tau(t)}$ with respect to $g_{\tau(t)}$. Similarly, \eqref{eq:m.k.plus.pairs.psc.mean.convex.meancurv} gives:
	\begin{equation} \label{eq:m.k.plus.pairs.psc.mean.convex.meancurv.sq}
		\bar{H}_t^2 = \frac{n^2 \eps^2 \rho'(t)^2}{4(1+\eps \rho(t))^2 A^2 u_{\tau(t)}^2},
	\end{equation}
	and
	\begin{equation} \label{eq:m.k.plus.pairs.psc.mean.convex.meancurv.ddt}
		\frac{\tfrac{\partial}{\partial t} \bar{H}_t}{A u_{\tau(t)}} = \frac{n \eps \rho''(t)}{2(1+\eps\rho(t)) A^2 u_{\tau(t)}^2} - 2n^{-1} \bar{H}_t^2 - \frac{\bar{H}_t \tau'(t) [\tfrac{\partial}{\partial t} \log u_t]_{t=\tau(t)}}{A u_{\tau(t)}}.
	\end{equation}
	Combining \eqref{eq:m.k.plus.pairs.psc.mean.convex.sff.sq}, \eqref{eq:m.k.plus.pairs.psc.mean.convex.meancurv.sq}, \eqref{eq:m.k.plus.pairs.psc.mean.convex.meancurv.ddt} and some algebraic manipulation:
	\begin{align} \label{eq:m.k.plus.pairs.psc.mean.convex.scal.2}
		R_{\bm{h}}|_{M \times \{t\}} 
		& = \frac{2 \lambda_1(\tau(t))}{1+\eps \rho(t)} - \frac{n \eps \rho''(t)}{(1+\eps \rho(t)) A^2 u_{\tau(t)}^2} - \frac{(3-n)n \eps^2 \rho'(t)^2}{4 (1+\eps\rho(t))^2 A^2 u_{\tau(t)}^2} \nonumber \\
		& \qquad  - \frac{|\sff_{\tau(t)}|^2 \tau'(t)^2}{A^2 u_{\tau(t)}^2} + \frac{n \eps \rho'(t) \tau'(t) [\tfrac{\partial}{\partial t} \log u_t]_{t=\tau(t)}}{(1+\eps\rho(t)) A^2 u_{\tau(t)}^2}.
	\end{align} 

	Our next goal is to ensure conclusions (3)-(4). To that end, first note that, as in the proof of the previous proposition, Proposition \ref{prop:m.k.plus.pairs}'s (3) and the invariance of the sign of $\lambda_1(-\Delta + \tfrac12 R)$ under rescaling and diffeomorphisms gives the crude estimate
	\begin{equation} \label{eq:m.k.plus.psc.mean.convex.lambda1.positive}
		\lambda_1(t) > 0 \text{ for all } t \in (0, 1).
	\end{equation} 
	We improve on this as follows:\medskip
	
	\textbf{Step 1} (arranging conclusion (3) near $t=0$, and conclusion (4)).

			\begin{itemize}
				\item \textbf{Case A}: $g_L \in \sM^{>0}_{1/2}(M)$, i.e., $\lambda_1(0) > 0$. Take
					\[ \tau(t) = t, \; \rho(t) = t^2 \text{ near } t=0. \]
					For sufficiently large $A$ depending on $n$, $\eps$, $(u_t)_{t \in [0,1]}$, $(|\sff_t|)_{t \in [0,1]}$, the first term of the right hand side of \eqref{eq:m.k.plus.pairs.psc.mean.convex.scal.2}, which is uniformly positive near $t=0$ by $\lambda_1(0) > 0$, dominates all remaining terms near $t=0$. Therefore,
					\[ R_{\bm{h}}|_{M \times \{t\}} \geq \lambda_1(t) > 0 \text{ near } t=0. \]
					This ensures conclusion (3) near $t=0$, and (4) since $\lambda_1(0) > 0$.
					
				\item \textbf{Case B}: $g_L \not \in \sM^{>0}_{1/2}(M)$, i.e., $\lambda_1(0) = 0$. By Proposition \ref{prop:m.k.plus.pairs}'s (4.a),
					\[ \left[ \tfrac{d}{dt} \lambda_1(-\Delta_{g^o_t} + \tfrac12 R_{g^o_t}) \right]_{t=0} > 0. \]
					Recall that $g_t = \sigma_t \psi_t^* g^o_t$ for $\sigma_t \in (0, \infty)$, $\psi_t \in \operatorname{Diff}(M)$. Using 
					\[ \lambda_1(-\Delta_{\sigma_t \psi_t^* g_t} + \tfrac12 R_{\sigma_t \psi_t^* g_t}) = \sigma_t^{-1} \lambda_1(-\Delta_{g_t} + \tfrac12 R_{g_t}) \]
					together with $\lambda_1(-\Delta_{g_L} + \tfrac12 R_{g_L}) = 0$, we find that
					\[ \lambda_1'(0) = \left[ \tfrac{d}{dt} \lambda_1(-\Delta_{g_t} + \tfrac12 R_{g_t}) \right]_{t=0} > 0. \]
					Take $\alpha > 0$ sufficiently small depending on $\lambda_1'(0)$, so that $\tau(t) = \alpha t^2$ satisfies 
					\begin{equation} \label{eq:m.k.plus.psc.mean.convex.lambda1.nonnegative}
						(\tau'(t))^2 \leq \tfrac{1}{2} \lambda_1(\tau(t)) \text{ near } t=0.
					\end{equation}
					Take $A$ large enough depending on $n$, $\eps$, $(u_t)_{t \in [0,1]}$, $(|\sff_t|)_{t \in [0,1]}$, so that
					\[ \frac{n \eps(1 + |[\tfrac{\partial}{\partial t} \log u_t]_{t=\tau(t)}|)}{(1+\eps \rho(t)) A^2 u_{\tau(t)}^2} \leq 1, \; \frac{|\sff_{\tau(t)}|^2}{A^2 u_{\tau(t)}^2} \leq 1. \]
					We now take $\rho(t)$ to satisfy
					\[ \rho''(t) + \rho'(t) \leq \tfrac{1}{2} \lambda_1(\tau(t)) \text{ near } t=0. \]
					Then, \eqref{eq:m.k.plus.pairs.psc.mean.convex.scal.2} gives
					\[ R_{\bm{h}}|_{M \times \{t\}} \geq \tfrac12 \lambda_1(\tau(t)) \geq 0 \text{ near } t=0, \]
					with strict inequality except at $t=0$ by virtue of \eqref{eq:m.k.plus.psc.mean.convex.lambda1.nonnegative} since $\tau(t) = \alpha t^2$. This ensures (3) near $t=0$, while (4) is vacuous.
			\end{itemize}

	\textbf{Step 2} (arranging conclusion (3) near $t=1$). Step 1's strategy goes through verbatim except with $\tau(t) = 1 -\alpha t^2$ for small $\alpha$, and arranges for
	\[ (\tau'(t))^2 \leq \tfrac12 (1+\eps)^{-1} \lambda_1(\tau(t)). \]
	We also instead take $\rho(t)$ near $t=1$ to satisfy
	\[ -\rho''(t) + \tau'(t) \left[ \tfrac{\partial}{\partial t} \log u_t \right]_{t=\tau(t)} \rho'(t) > 0. \]
	The rest of the argument proceeds similarly to yield
	\[ R_{\bm{h}}|_{M \times \{t\}} > \tfrac12 (1+\eps)^{-1} \lambda_1(\tau(t)) \geq 0 \text{ near } t=1. \]
	We leave details to the reader.
	
	\textbf{Step 3} (arranging conclusion (3) away from $t=0$, $1$). At this point, $\rho$, $\tau$ are fixed near $t = 0$, $1$, and $A$ is bounded from below depending on $n$, $\eps$, $(u_t)_{t \in [0,1]}$, $(|\sff_t|)_{t \in [0,1]}$. If $J \subset [0,1]$ denotes the relatively open set near which Steps 1 and 2 were arranged, then note that by \eqref{eq:m.k.plus.psc.mean.convex.lambda1.positive} we have
	\[ \inf_{t \in [0,1] \setminus J} \lambda_1(\tau(t)) > 0. \]
	Now, extend $\rho$, $\tau$ arbitrarily to $[0,1]$ to be smooth bijections on $[0,1]$ satisfying \eqref{eq:m.k.plus.pairs.psc.mean.convex.rho}, \eqref{eq:m.k.plus.pairs.psc.mean.convex.tau}. We now take $A$ sufficiently large, depending only on $n$, $\eps$, $(u_t)_{t \in [0,1]}$, $(|\sff_t|)_{t \in [0,1]}$, and our choices for $\tau$, $\rho$, and arrange for
	\[ \bar{H}_t^2 + |\bar{\sff}_t|^2 + \frac{2 \tfrac{\partial}{\partial t} \bar{H}_t}{A u_{\tau(t)}} \leq \inf_{t \in [0,1] \setminus J} \frac{\lambda_1(\tau(t))}{1 + \eps \rho(t)}. \]
	Altogether, \eqref{eq:m.k.plus.pairs.psc.mean.convex.scal} gives
	\[ R_{\bm{h}}|_{M \times \{t\}} \geq c \lambda_1(\tau(t)) > 0 \text{ for } t \in [0,1] \setminus J \]
	too, and thus conclusion (3). This completes the proof of the proposition.
\end{proof}

\section{Proof of Theorem \ref{theo:m.k.plus.fillin}} \label{sec:proof.of.fillin}

We need the following lemma (which we will apply with $(\bm{M}_1, \bm{g}_1)$ having come from Proposition \ref{prop:m.k.plus.pairs.psc.mean.convex}) to prove Theorem \ref{theo:m.k.plus.fillin}.

\begin{lemm}[Smoothing a concatenation with no new minimal surfaces, I] \label{lemm:gluing.no.minimal.compact}
	Let $3 \leq n+1 \leq 7$. Consider two compact Riemannian $(n+1)$-manifolds-with-boundary $(\bm{M}_i, \bm{g}_i)$, $i=1$, $2$, with $\partial \bm{M}_i =: M_i^L \sqcup M_i^R$ for $i = 1, 2$, and
	\begin{itemize}
		\item $M_1^R \subset (\bm{M}_1, \bm{g}_1)$ is isometric to $M_2^L \subset (\bm{M}_2, \bm{g}_2)$; 
		\item $M_1^L \subset (\bm{M}_1, \bm{g}_1)$ and $M_2^L \subset (\bm{M}_2, \bm{g}_2)$ are stable minimal hypersurfaces, though $M_1^L$ is allowed to be empty;
		\item $M_1^R \subset (\bm{M}_1, \bm{g}_1)$ and $M_2^R \subset (\bm{M}_2, \bm{g}_2)$ have mean curvature vectors pointing strictly inward;
		\item $R_{\bm{g}_1} \geq 0$ on $\bm{M}_1$, strictly along $M_1^R$, and $R_{\bm{g}_2} \geq 0$ on $\bm{M}_2$;
		\item neither $(\bm{M}_i, \bm{g}_i)$ has closed interior minimal hypersurfaces.
	\end{itemize}
	Define 
	\[ (\hat{\bm{M}}, \hat{\bm{g}}) := (\bm{M}_1, \bm{g}_1) \sqcup_\sim (\bm{M}_2, \bm{g}_2), \]
	where $\sim$ identifies the isometric boundaries $M_1^R$, $M_2^L$ as a single hypersurface $\Sigma \subset \hat{\bm{M}}$. This is a compact Riemannian $(n+1)$-manifold-with-boundary with a  ``corner'' along $\Sigma$.\footnote{$\hat{\bm{M}}$ has a well-defined smooth structure; it is $\hat{\bm{g}}$ that has a ``corner'' in the sense of \cite{Miao:pmt.corners}.} Let $\bm{U}$ be a neighborhood of $\Sigma$ in $(\hat{\bm{M}}, \hat{\bm{g}})$. 
	
	Then, there exist smooth metrics $\hat{\bm{g}}_\delta$ on $\hat{\bm{M}}$, for $\delta > 0$ small, such that:
	\begin{enumerate}
		\item $\hat{\bm{g}}_\delta \to \hat{\bm{g}}$ in $C^0(\hat{\bm{M}})$ as $\delta \to 0$;
		\item $\hat{\bm{g}}_\delta \equiv \hat{\bm{g}}$ on $(\bm{M}_1 \setminus \bm{U}) \subset \hat{\bm{M}}$;
		\item $\hat{\bm{g}}_\delta \restr M_2^R \equiv (1+\delta) \hat{\bm{g}} \restr M_2^R$;
		\item $R_{\hat{\bm{g}}_\delta} \geq 0$ on $\hat{\bm{M}}$;
		\item $(\hat{\bm{M}}, \hat{\bm{g}}_\delta)$  has no closed interior minimal hypersurfaces.
	\end{enumerate}
\end{lemm}
\begin{proof}
	First, we need the following:
	
	\begin{claim} \label{clai:gluing.no.minimal.foliation}
		There exists a smooth family $[0, \sigma] \ni t \mapsto f^{(t)} \in C^\infty(M_2^L)$ with $f^{(0)} \equiv 0$, $f^{(t)} > 0$ for $t \in (0, \sigma]$, such that
		\begin{multline} \label{eq:gluing.no.minimal.foliation}
			\text{the mean curvature vectors of } M_2^{L,t} := \graph^{\bm{g}_2}_{M_2^L} f^{(t)} \\
			\text{point strictly away from } M_2^R \text{ for all } t \in (0, \sigma].
		\end{multline}
	\end{claim}
	\begin{proof}[Proof of claim]
		If $M_2^L$ is strictly stable, then flowing in the inward-pointing direction of a first eigenfunction of the stability operator of $M_2^L$, the rate of change of mean curvatures is strictly positive (by the second variation formula). The result follows with $f^{(t)}$ being small multiples of a fixed first eigenfunction of the stability operator of $M_2^L$.
		
		If $M_2^L$ is degenerate stable, the result is only slightly more delicate and follows from the implicit function theorem as in \cite[p. 225]{Galloway:outermost.psc} or \cite[Lemma 10]{Song:yau.conjecture}.
	\end{proof}
	
	Denote by $\phi \in C^\infty(\bm{M}_2)$ the unique solution to
	\begin{equation} \label{eq:gluing.no.minimal.phi}
		\Delta_{\bm{g}_2} \phi \equiv 0 \text{ on } \bm{M}_2, \; \phi \equiv 0 \text{ along } M_2^L, \; \phi \equiv 1 \text{ along } M_2^R.
	\end{equation}
	By the Hopf boundary point lemma,
	\begin{equation} \label{eq:gluing.no.minimal.hopf.1.L}
		\nabla_{\bm{g}_2} \phi \text{ is strictly inward pointing along } M_2^L,
	\end{equation}
	\begin{equation} \label{eq:gluing.no.minimal.hopf.1.R}
		\nabla_{\bm{g}_2} \phi \text{ is strictly outward pointing along } M_2^R.
	\end{equation}
	For $\delta > 0$, set $\bm{g}_{2,\delta}' := (1+\delta\phi)^{4/(n-1)} \bm{g}_2$. Then, \eqref{eq:scalar.curvature.conformal} and \eqref{eq:gluing.no.minimal.phi} imply 
	\begin{equation} \label{eq:gluing.no.minimal.scalar.1}
		R_{\bm{g}_{2,\delta}'} \geq 0 \text{ on } \bm{M}_2,
	\end{equation}
	while \eqref{eq:mean.curvature.conformal} and \eqref{eq:gluing.no.minimal.hopf.1.L}, \eqref{eq:gluing.no.minimal.hopf.1.R} imply that, as long as $\delta > 0$ is small,
	\begin{multline} \label{eq:gluing.no.minimal.foliation.1}
		\text{Claim } \ref{clai:gluing.no.minimal.foliation} \text{ holds in } (\bm{M}_2, \bm{g}_{2,\delta}') \text{ with the same hypersurfaces } M_2^{L,t}  \\
		\text{ and } \eqref{eq:gluing.no.minimal.foliation} \text{ holds for  all } t \in [0, \sigma],
	\end{multline}
	and
	\begin{multline} \label{eq:gluing.no.minimal.meancurv.1.L}
		\text{the mean curvature vectors of } M_2^L \subset (\bm{M}_2, \bm{g}_{2,\delta}') \\
		\text{are shorter than those of } M_1^R \subset (\bm{M}_1, \bm{g}_{1}),
	\end{multline}
	and
	\begin{equation} \label{eq:gluing.no.minimal.meancurv.1.R}
		\text{the mean curvature vectors of } M_2^R \subset (\bm{M}_2, \bm{g}_{2,\delta}') \text{ point strictly toward } M_2^L.
	\end{equation}
	Hold this $\delta$ fixed. Denote by $\zeta_\delta \in C^\infty(\bm{M}_2)$ the unique solution to
	\begin{equation} \label{eq:gluing.no.minimal.zeta}
		\Delta_{\bm{g}_{2,\delta}'} \zeta_\delta = -1 \text{ on } \bm{M}_2, \; \zeta_\eps \equiv 0 \text{ on } \partial \bm{M}_2.
	\end{equation}
	Set $\bm{g}_{2,\delta,\eps}'' := (1 + \eps \zeta_\delta)^{4/(n-1)}$. Then, \eqref{eq:scalar.curvature.conformal} and \eqref{eq:gluing.no.minimal.zeta} imply
	\begin{equation} \label{eq:gluing.no.minimal.scalar.2}
		R_{\bm{g}_{2,\delta,\eps}''} > 0 \text{ on } \bm{M}_2,
	\end{equation}
	and \eqref{eq:mean.curvature.conformal}, \eqref{eq:gluing.no.minimal.foliation.1}, \eqref{eq:gluing.no.minimal.meancurv.1.L}, \eqref{eq:gluing.no.minimal.meancurv.1.R} imply that, as long as $\eps > 0$ is sufficiently small, depending on $\delta$,
	\begin{multline} \label{eq:gluing.no.minimal.foliation.2}
		\text{Claim } \ref{clai:gluing.no.minimal.foliation} \text{ holds in } (\bm{M}_2, \bm{g}_{2,\delta,\eps}'') \text{ with the same hypersurfaces } M_2^{L,t}  \\
		\text{ and } \eqref{eq:gluing.no.minimal.foliation} \text{ holds for  all } t \in [0, \sigma],
	\end{multline}
	and
	\begin{multline} \label{eq:gluing.no.minimal.meancurv.2.L}
		\text{the mean curvature vectors of } M_2^L \subset (\bm{M}_2, \bm{g}_{2,\delta,\eps}'') \\
		\text{are shorter than those of } M_1^R \subset (\bm{M}_1, \bm{g}_{1}),
	\end{multline}
	and
	\begin{equation} \label{eq:gluing.no.minimal.meancurv.2.R}
		\text{the mean curvature vectors of } M_2^R \subset (\bm{M}_2, \bm{g}_{2,\delta,\eps}'') \text{ point strictly toward } M_2^L.
	\end{equation}
	We hold this $\eps$ fixed in denoting the resulting manifold $(\bm{M}_2, \bm{g}_{2,\delta}'')$. On this manifold, we apply Proposition \ref{prop.BarHankedefor.step1} with $\bm{U}$ inside the neighborhood of Claim \ref{clai:gluing.no.minimal.foliation}, and $\eta \to 0$ to be chosen, to deform it locally near $M_2^L$ to $(\bm{M}_2, \bm{g}_{2,\delta,\eta}''')$. Certainly, for small $\eta$ we have, by \eqref{eq:gluing.no.minimal.foliation.2} and Proposition \ref{prop.BarHankedefor.step1}'s property (4) that 
	\begin{equation} \label{eq:gluing.no.minimal.scalar.M2}
		R_{\bm{g}_{2,\delta,\eta}'''} > 0 \text{ on } \bm{M}_2.
	\end{equation}
	Moreover:
	\begin{claim} \label{clai:gluing.no.minimal.foliation.M2}
		For $\delta > 0$ sufficiently small, and then $\eta > 0$ sufficiently small depending on $\delta$, Claim \ref{clai:gluing.no.minimal.foliation} holds in $(\bm{M}_2, \bm{g}_{2,\delta,\eta}''')$ with the same hypersurfaces $M^{L,t}_2$ and \eqref{eq:gluing.no.minimal.foliation} holds for all $t \in [0,\sigma]$.
	\end{claim}
	\begin{proof}[Proof of claim]
		Since our conformal constructions trivialize in $C^\infty$ as $\delta \to 0$, we may write
		\[ M_2^{L,t} =: \graph_{M_2^L}^{\bm{g}_{2,\delta}''} f^{(\delta,t)} \]
		for $f^{(\delta,t)} : M_2^L \to \RR$ with uniform $C^2$ constants provided $\delta$ is small. 
		 
		On the other hand, note that the metrics $\bm{g}_{2,\delta}''$, $\bm{g}_{2,\delta,\eta}'''$ have no $dt$ factors in their difference in Fermi coordinates over $(x, t)$ with respect to either $\bm{g}_{2,\delta}''$ or $\bm{g}_{2,\delta,\eta}'''$ (see Remark \ref{rema:BarHanke.fermi}). Thus, by Lemmas \ref{lemm:slicing.formula.curv}, \ref{lemm:mean.curv.fermi}, the uniform $C^2$ bounds on $f^{(\delta,t)}$, and the fact that $\bm{g}_{2,\delta,\eta}''' \to \bm{g}_{2,\delta}''$ in $C^1$ by Proposition \ref{prop.BarHankedefor.step1}'s property (1), the mean curvatures of $M^{L,t}_2 := \graph_{M_2^L}^{\bm{g}_{2,\delta}''} f^{(\delta,t)}$ remain positive in $(\bm{M}_2, \bm{g}_{2,\delta,\eta}''')$ if $\eta$ is sufficiently small, since they were positive in $(\bm{M}_2, \bm{g}_{2,\delta}'')$. The claim follows.
	\end{proof}
	
	We hold this $\eta$ fixed in denoting the resulting manifold $(\bm{M}_2, \bm{g}_{2,\delta}''')$.
	
	Next, apply Theorem \ref{theo:refined.gluing.single} to $(\bm{M}_1, \bm{g}_1)$ with $\eta \to 0$ to be chosen anew to deform it locally near $M_1^R$ to $(\bm{M}_1, \bm{g}_{1,\eta}')$ so that the mean curvature of $M_1^R \subset (\bm{M}_1, \bm{g}_{1,\eta}')$ agrees with that of $M_2^L \subset (\bm{M}_2, \bm{g}_{2,\delta}''')$, which in turn agrees with that of $M_2^L \subset (\bm{M}_2, \bm{g}_{2,\delta}''')$. The reason we can invoke this theorem is \eqref{eq:gluing.no.minimal.meancurv.2.L} above and Propositon \ref{prop.BarHankedefor.step1}'s property (3). Note that, if $\eta > 0$ is sufficiently small, then a fixed neighborhood $\bm{U}_1$ of $M_1^R$ in $(\bm{M}_1, \bm{g}_{1,\eta}')$ will have the property that it is foliated by hypersurfaces whose mean curvature vectors all point strictly away from $M_1^R$. Fix any such $\eta > 0$, and write $(\bm{M}_1, \bm{g}_1')$ for the resulting manifold.
	
	Note that, by Remark \ref{rema:BarHanke.Cnormal.gluing},
	\[ (\hat{\bm{M}}, \hat{\bm{g}}_\delta) := (\bm{M}_1, \bm{g}_1''') \sqcup_\sim (\bm{M}_2, \bm{g}_{2,\delta}''') \]
	is smooth and satisfies conclusions (1), (2), (3),\footnote{For (3) we reparametrize so that $(1+\delta)^{4/(n-1)} \mapsto (1+\delta)$. This doesn't affect (1), (2), (4), (5).} (4) by construction. The lemma will follow once we prove that it also satisfies conclusion (5). We will use a smooth local foliation of mean-convex hypersurfaces near $\Sigma$ to achieve this. The portion the foliation to within $\bm{M}_1$ is covered by $\bm{U}_1$. On the other side, set
	\[ \bm{U}_2 := \bigcup_{t \in [0,\sigma)} M_2^{L,t}, \]
	and note that, by Claim \ref{clai:gluing.no.minimal.foliation.M2}, it is a neighborhood of $M_2^L \subset (\bm{M}_2, \bm{g}_{2,\delta}''')$ that is smoothly foliated by hypersurfaces whose mean curvature vectors all point strictly away from $M_2^R$, and $\bm{U}_2$ is independent of $\delta \to 0$.

	Suppose, for the sake of contradiction, that there did exist a closed minimal hypersurface $T^0_\delta \subset (\hat{\bm{M}} \setminus \partial \hat{\bm{M}}, \hat{\bm{g}}_\delta)$. 
	
	\begin{claim} \label{clai:gluing.no.minimal.compact.mcf.1}
		There exists a nonempty two-sided stable closed minimal hypersurface\footnote{$T_\delta$ may be a multiple cover of an underlying embedded hypersurface.} $T_\delta \subset (\hat{\bm{M}} \setminus \partial \hat{\bm{M}}, \hat{\bm{g}}_\delta)$ that satisfies $\vol_{\hat{\bm{g}}_\delta}(T_\delta) < \vol_{\hat{\bm{g}}_\delta}(M_2^R)$.
	\end{claim}
	\begin{proof}[Proof of claim]
		This claim follows directly by area minimization in the homology class of the mean-convex boundary component $M_2^R$ in $(\hat{\bm{M}} \setminus T^0_\delta, \hat{\bm{g}}_\delta)$, or more precisely in its metric completion where we have one or two additional minimal boundary components corresponding to the two-sided covers of $T^0_\delta$. We show yet another argument that will make the proof of the more complex Claim \ref{clai:min.surf.existence.mcf} below easier to follow. Our idea was inspired from the proof of \cite[Theorem 2.5]{White:isoperimetric}.
		
		We wish to consider the mean curvature flow starting at the mean-convex boundary component $M_2^R$. Since mean curvature flows may develop singularities in finite time, we need to work with a weaker notion: the mean-convex level set flow (see \cite{White:mean.convex.mcf.size.singular}). 
		
		For simplicity we first assume that $M_1^L = \emptyset$. Then $M_2^R$ bounds a compact mean-convex domain $\cK(0)$ in $(\hat{\bm{M}}, \hat{\bm{g}}_\delta)$. Let $t \mapsto \cK(t)$ be the mean-convex level set flow out of $\cK(0)$ in $(\hat{\bm{M}}, \hat{\bm{g}}_\delta)$. The constant flow $t \mapsto T^0_\delta$ is a weak set flow (\cite[Section 4]{White:topology}) by the minimality of $T^0_\delta$ and the maximum principle. Then, the weak set flow avoidance principle (\cite[Theorem 7.1]{White:topology}) implies $T^0_\delta \subset \cK(\infty) := \lim_{t \to \infty} \cK(t)$. By \cite[Theorem 11.1]{White:mean.convex.mcf.size.singular}, $T_\delta := \partial \cK(\infty)$ is a smooth stable minimal hypersurface (possibly the double cover of an embedded hypersurface). It is nonempty since $\cK(\infty) \supset T^0_\delta$, and $\vol_{\hat{\bm{g}}_\delta}(T_\delta) = \vol_{\hat{\bm{g}}}(\partial \cK(\infty)) < \vol_{\hat{\bm{g}}}(\partial \cK(0)) = \vol_{\hat{\bm{g}}_\delta}(M_2^R)$.
		
		Now consider the general case, $M_1^L \neq \emptyset$. Enlarge $\hat{\bm{M}}$ by gluing a $M_1^L \times (-\infty, 0]$ to $\partial \hat{\bm{M}}$ along $M_1^L$, and smoothly extend $\hat{\bm{g}}_\delta$ to a complete metric with Ricci curvature bounded below (this is necessary for mean curvature flow well-posedness), and so that $M_1^L \times \{-1\}$ has mean curvature vectors pointing toward $M_1^L \times \{0\}$. Now take $\bar \cK(0)$ to be the compact mean-convex domain between $M_2^R$ and $M_1^L \times \{-1\}$, and run the mean-convex level set flow $t \mapsto \bar \cK(t)$ out of $\bar \cK(0)$. We now proceed as above but also use that $t \mapsto M_1^L$ is also a constant weak set flow. By the avoidance principle, $M_1^L \cup T^0_\delta \subset \bar \cK(\infty) := \lim_{t \to \infty} \bar \cK(t)$ and $\partial \bar \cK(\infty)$ is a smooth stable minimal hypersurface, at least one of whose components is in the non-extended manifold.
	\end{proof}
	
	\begin{claim}
		$T_\delta \not \subset \bm{M}_1 \cup \bm{U}_2$, as long as $\delta$ is small enough.
	\end{claim}
	\begin{proof}[Proof of claim]
		The mean-convex foliation of $\bm{U}_1$, $\bm{U}_2$ with respect to $\hat{\bm{g}}_\delta$ and conclusion (2) would imply that $T_\delta$ would be a closed minimal hypersurface in $(\bm{M}_1 \setminus \bm{U}_1, \bm{g}_1)$. This is a contradiction.
	\end{proof}

	We now restrict our focus to $(\bm{M}_2 \setminus \partial \bm{M}_2, \bm{g}_{2,\delta}''')$ and send $\delta \to 0$. Note that $\bm{g}_{2,\delta}'''$ converges in $C^\infty_{\loc}$ to the original metric, $\bm{g}_2$. In view of the uniform volume bounds we have obtained, the portion of the stable minimal hypersurfaces $T_\delta$ that is contained in the interior of $\bm{M}_2$ subsequentially converges in $C^\infty_{\loc}$ to a smooth minimal hypersurface $T \subset (\bm{M}_2 \setminus \partial \bm{M}_2, \bm{g}_2)$ with $T \cap (\bm{M}_2 \setminus \bm{U}_2) \neq \emptyset$ by construction and with $\emptyset \neq \overline{T} \setminus T \subset M_2^L$ by \cite{SchoenSimon:stable}. Now:
	
	\begin{claim} \label{clai:min.surf.existence.mcf}
		There exists a nonempty two-sided stable closed minimal hypersurface $T^* \subset (\bm{M}_2 \setminus \partial \bm{M}_2, \bm{g}_2)$ that satisfies $\vol_{\bm{g}}(T^*) < \vol_{\bm{g}}(M_R)$.
	\end{claim}
	\begin{proof}
		This follows from the same argument as Claim \ref{clai:gluing.no.minimal.compact.mcf.1}, except now we need to work on an extension of $(\bm{M}_2, \bm{g}_2)$ instead of an extension of $(\hat{\bm{M}}, \hat{\bm{g}}_\delta)$ and use $M_2^L \cup T$ as an obstacle instead of $M_1^L \cup T^0_\delta$. To see that the constant flow on the closed set $M_2^L \cup T$ is a weak set flow one can argue by contradiction; a smooth mean curvature flow cannot have a finite first touching time with $M_2^L \cup T$ because it'd either have to happen with a smooth point of $M_2^L$ or, else, with a smooth point of $T$, either way contradicting the maximum principle. Thus, \cite[Theorem 7.1]{White:topology} applies just the same as before to give us avoidance from $M_2^L \cup T$, and the rest of the argument from Claim  \ref{clai:gluing.no.minimal.compact.mcf.1} now applies verbatim with $M_2^L \cup T$ and $(\bm{M}_2 \setminus \partial \bm{M}_2, \bm{g}_2)$ in place of $M_1^L \cup T^0_\delta$ and $(\hat{\bm{M}} \setminus \partial \hat{\bm{M}}, \hat{\bm{g}}_\delta)$.
	\end{proof}
	
	The existence of $T^*$ violates $(\bm{M}_2, \bm{g}_2) \in \cE_B(M_2, g_2, H=0)$. 
\end{proof}

Now we can prove Theorem \ref{theo:m.k.plus.fillin}.

\begin{proof}[Proof of Theorem \ref{theo:m.k.plus.fillin}]
	If $f : \bm{M} \to \RR$ satisfies
	\[ \Delta_{\bm{g}_0} f = \eps \text{ on } \bm{M}, \; f \equiv 1 \text{ on } \partial \bm{M}, \]
	then using \eqref{eq:scalar.curvature.conformal}, \eqref{eq:mean.curvature.conformal}, it follows that $(\bm{M}, f^2 \bm{g}_0)$ has everywhere positive scalar curvature and, when $\eps > 0$ is sufficiently small, its boundary is still mean-convex.
	
	Invoke Proposition \ref{prop:m.k.plus.pairs.psc.mean.convex} to construct a metric $\bm{h}$ on $\bm{C} := M \times [0,1]$ such that:
	\begin{itemize}
		\item $\bm{h} \restr (M \times \{0\}) = g_0$;
		\item $\bm{h} \restr (M \times \{1\}) = \psi^* g$ for some $\psi \in \operatorname{Diff}_0(M)$;
		\item $M \times \{0\} \subset (\bm{C}, \bm{h})$ is stable minimal;
		\item $M \times \{1\} \subset (\bm{C}, \bm{h})$ has mean curvature vectors pointing strictly toward $M \times \{0\}$;
		\item $R_{\bm{h}} \geq 0$ on $\bm{C}$.
	\end{itemize}
	Then, the result follows by using Lemma \ref{lemm:gluing.no.minimal.compact} to smooth the concatenation
	\[ (\bm{M}, f^2 \bm{g}_0) \sqcup_\sim (\bm{C}, (\Psi^{-1})^* \bm{h}), \]
	where $\Psi$ is the constant extension of $\psi \in \operatorname{Diff}_0(M)$ to $\operatorname{Diff}_0(\bm{C})$ and $\sim$ identifies the obvious boundary components under $\psi$.
\end{proof}

\section{Proof of Theorem \ref{theo:bartnik.3.dim}} \label{sec:applications.relativity.bartnik}

We need the following noncompact analog of Lemma \ref{lemm:gluing.no.minimal.compact}:

\begin{lemm}[Smoothing a concatenation with no new minimal surfaces, II] \label{lemm:gluing.no.minimal.af}
	Let $3 \leq n+1 \leq 7$. Consider a compact Riemannian $(n+1)$-manifold-with-boundary $(\bm{M}_1, \bm{g}_1)$ and a complete asymptotically flat Riemannian $n$-manifold-with-boundary $(\bm{M}_2, \bm{g}_2)$ so that $ \partial \bm{M}_1 =: M_1^L \sqcup M_1^R$, $\partial \bm{M}_2 =: M_2$, and:
	\begin{itemize}
		\item $M_1^R \subset (\bm{M}_1, \bm{g}_1)$ is isometric to $M_2 \subset (\bm{M}_2, \bm{g}_2)$;
		\item $M_1^L \subset (\bm{M}_1, \bm{g}_1)$, $M_2 \subset (\bm{M}_2, \bm{g}_2)$ are stable minimal hypersurfaces, though $M_1^L$ is allowed to be empty;
		\item $M_1^R \subset (\bm{M}_1, \bm{g}_1)$ has strictly inward pointing mean curvature;
		\item $R_{\bm{g}_1} \geq 0$ on $\bm{M}_1$, strictly near along $M_1^R$, and $R_{\bm{g}_2} \geq 0$ on $\bm{M}_2$;
		\item neither $(\bm{M}_i, \bm{g}_i)$ has no closed interior minimal hypersurfaces.
	\end{itemize}
	Define
	\[ (\hat{\bm{M}}, \hat{\bm{g}}) := (\bm{M}_1, \bm{g}_1) \sqcup_\sim (\bm{M}_2, \bm{g}_2), \]
	where $\sim$ identifies isometric boundaries $M_1^R$, $M_2^L$ as a single hypersurface $\Sigma \subset \hat{\bm{M}}$. This is a compact Riemannian $(n+1)$-manifold-with-boundary with a ``corner'' along $\Sigma$. Let $\bm{U}$ be a neighborhood of $\Sigma$ in $(\hat{\bm{M}}, \hat{\bm{g}})$. 
	
	Then, there exist smooth metrics $\hat{\bm{g}}_\eta$ on $\hat{\bm{M}}$, for $\eta > 0$ small, such that:
	\begin{enumerate}
		\item $\hat{\bm{g}}_\eta \to \hat{\bm{g}}$ in $C^0(\hat{\bm{M}}) \cap C^\infty_{\loc}(\hat{\bm{M}} \setminus \Sigma)$ as $\eta \to 0$;
		\item $\hat{\bm{g}}_\eta \equiv \hat{\bm{g}}$ on $(\bm{M}_1 \setminus \bm{U}) \subset \hat{\bm{M}}$;
		\item $\mathfrak{m}_{ADM}(\hat{\bm{M}}, \hat{\bm{g}}_\eta) = (1+\eta) \mathfrak{m}_{ADM}(\hat{\bm{M}}, \hat{\bm{g}})$;
		\item $R_{\hat{\bm{g}}_\eta} \geq 0$ on $\hat{\bm{M}}$;
		\item $(\hat{\bm{M}}, \hat{\bm{g}}_\eta)$ has no closed interior minimal hypersurfaces.
	\end{enumerate}
\end{lemm}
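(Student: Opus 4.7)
The strategy is to adapt the proof of Lemma \ref{lemm:gluing.no.minimal.compact} to the asymptotically flat setting, replacing the $(1+\delta)$-rescaling of the outer boundary metric used there by a $(1+\eta)$-rescaling of the ADM mass. The new ingredient is a bounded harmonic function on $\bm{M}_2$ whose asymptotic expansion controls the mass shift.

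\textbf{Step 1 (mass-shifting conformal deformation on $\bm{M}_2$).} Let $\phi \in C^\infty(\bm{M}_2)$ be the unique bounded harmonic function with $\phi \equiv 0$ on $M_2$ and $\phi \to 1$ at infinity, which exists by standard AF theory. By the Hopf boundary point lemma, $\partial_\nu \phi < 0$ along $M_2$ (for outward normal $\nu$ to $\bm{M}_2$), and the AF asymptotic expansion gives
\[ \phi = 1 - A|x|^{-(n-2)} + O(|x|^{-(n-1)}) \]
for some $A > 0$ (positive as the capacity of $M_2$ in $(\bm{M}_2, \bm{g}_2)$ by the strong maximum principle). Set $u_\delta := 1 + \delta(1-\phi)$ and $\bm{g}_{2,\delta}' := u_\delta^{4/(n-2)}\bm{g}_2$. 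Since $u_\delta$ is harmonic, the conformal scalar formula gives $R_{\bm{g}_{2,\delta}'} = u_\delta^{-4/(n-2)} R_{\bm{g}_2} \geq 0$, and the asymptotics show that $\bm{g}_{2,\delta}'$ is AF with $\mathfrak{m}_{ADM}(\bm{g}_{2,\delta}') = \mathfrak{m}_{ADM}(\bm{g}_2) + 2\delta A$. Noting that $\mathfrak{m}_{ADM}(\bm{g}_2) > 0$ by the Bray--Lee Riemannian Penrose Inequality (using the hypotheses on $(\bm{M}_2, \bm{g}_2)$ and $3 \leq n \leq 7$), I choose $\delta = \delta(\eta) > 0$ with $\delta \to 0$ as $\eta \to 0$ so that $2\delta A = \eta \,\mathfrak{m}_{ADM}(\bm{g}_2)$, securing conclusion (3). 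At $M_2$, the conformal change of mean curvature combined with $\partial_\nu u_\delta = -\delta\partial_\nu\phi > 0$ and $H_{\bm{g}_2}|_{M_2} = 0$ forces $M_2$ to acquire a strictly outward-pointing mean curvature vector of size $O(\delta)$, and this is strictly smaller in magnitude than the fixed strictly inward-pointing mean curvature vector of $M_1^R \subset (\bm{M}_1, \bm{g}_1)$ once $\eta$ is small.

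\textbf{Step 2 (smooth gluing).} The set-up now matches Lemma \ref{lemm:gluing.no.minimal.compact} modulo noncompactness of $\bm{M}_2$, so I replay its subsequent steps essentially verbatim. A Dirichlet perturbation $(1+\eps\zeta_\delta)^{4/(n-2)}$, with $\zeta_\delta$ solving $\Delta_{\bm{g}_{2,\delta}'}\zeta_\delta = -\chi$ for a compactly supported nonnegative bump $\chi$ and $\zeta_\delta \to 0$ at infinity, upgrades $R \geq 0$ to $R > 0$ near $M_2$; the consequent additional leading-order mass shift (coming from the $|x|^{-(n-2)}$ decay of $\zeta_\delta$) is absorbed by a small readjustment of $\delta$. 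Proposition \ref{prop.BarHankedefor.step1} then produces a strictly mean-convex foliation of a neighborhood of $M_2$ in $\bm{M}_2$; Theorem \ref{theo:refined.gluing.single} deforms $\bm{g}_1$ within $\bm{U}\cap\bm{M}_1$ to reduce $|H_{M_1^R}|$ until it matches the mean curvature of $M_2$ in the perturbed metric; and Remark \ref{rema:BarHanke.Cnormal.gluing} promotes the concatenation to a globally smooth metric $\hat{\bm{g}}_\eta$ on $\hat{\bm{M}}$ with $R_{\hat{\bm{g}}_\eta} \geq 0$, settling conclusions (1), (2), (4).

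\textbf{Step 3 (no new minimal hypersurfaces, the main obstacle).} Where the compact argument started the mean-convex level-set flow at the outer boundary $M_2^R$, in the AF setting I substitute a large coordinate sphere $\Sigma_\rho \subset \bm{M}_2$, which is strictly mean-convex inward-pointing of order $\rho^{-1}$ for $\rho$ large by the positive-mass AF expansion. A hypothetical closed minimal hypersurface $T^0_\eta \subset \operatorname{int}(\hat{\bm{M}}, \hat{\bm{g}}_\eta)$ is then trapped inside the compact domain $K_\rho$ enclosed by $\Sigma_\rho$ (augmented by a collared copy of $M_1^L$ when $M_1^L \neq \emptyset$, exactly as in Claim \ref{clai:gluing.no.minimal.compact.mcf.1}), and the mean-convex level-set flow in $K_\rho$ starting from $\Sigma_\rho$ with $T^0_\eta$ as an obstacle yields at its long-time limit a nonempty closed stable minimal hypersurface $T_\eta$ of area at most $\area_{\hat{\bm{g}}_\eta}(\Sigma_\rho)$. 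Passing $\eta \to 0$, using $\hat{\bm{g}}_\eta \to \hat{\bm{g}}$ in $C^\infty_\loc(\hat{\bm{M}}\setminus\Sigma)$ together with Schoen--Simon compactness, a subsequence of $T_\eta$ converges to a stable minimal limit $T \subset \operatorname{int}\hat{\bm{M}}\setminus\Sigma$; the mean-convex collar of $\Sigma$ built in Step 2 prevents $T$ from collapsing onto $\Sigma$, while the uniform area bound prevents escape to infinity. A further intrinsic mean-convex level-set flow inside $(\bm{M}_2, \bm{g}_2)$ from $\Sigma_\rho$, using $T\cap\operatorname{int}\bm{M}_2$ (or, if $T$ has a component inside $\operatorname{int}\bm{M}_1$, that component directly) as an obstacle, extracts a nonempty closed stable minimal hypersurface in $\operatorname{int}(\bm{M}_i, \bm{g}_i)$ for $i=1$ or $2$, contradicting the hypothesis. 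The principal difficulty is precisely this barrier-plus-compactness argument, where the AF end substitutes for the outer mean-convex boundary of the compact case.
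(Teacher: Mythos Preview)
Your overall strategy matches the paper's intended one (the paper simply says ``a straightforward adaptation of the previous proof''), and your Step~3 is the correct noncompact replacement for the mean-convex level-set-flow argument. However, there is a concrete error in Step~1 that breaks Step~2.

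With your choice $u_\delta = 1 + \delta(1-\phi)$ and $\phi|_{M_2}=0$, you get $u_\delta|_{M_2} = 1+\delta$, so the induced metric on $M_2$ becomes $(1+\delta)^{4/(n-2)}(\bm{g}_2|_{M_2})$. This is no longer isometric to $\bm{g}_1|_{M_1^R}$, and Theorem~\ref{theo:refined.gluing.single} explicitly preserves the boundary metric (its conclusion~(2)), so the B\"ar--Hanke concatenation in your Step~2 cannot be carried out. The direct adaptation of Lemma~\ref{lemm:gluing.no.minimal.compact} is to take $u_\delta = 1 + \delta\phi$ (exactly as in the compact case), which keeps $u_\delta|_{M_2}=1$ and pushes the scaling to infinity instead; the Hopf lemma still makes $M_2$ acquire the correct small mean curvature, and conclusion~(3) then follows by tuning $\delta$ against the resulting continuous, monotone-in-$\delta$ mass.

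A minor remark: you invoke the Riemannian Penrose Inequality for $\mathfrak{m}_{ADM}(\bm{g}_2)>0$, but the lemma is stated for $3\le n\le 7$ while RPI is only known for $n\le 6$. Strict positivity of the mass under the stated hypotheses follows instead from the positive mass theorem with minimal boundary and its rigidity.
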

\begin{proof}
	A straightforward adaptation of the previous proof.
\end{proof}

\begin{proof}[Proof of Theorem \ref{theo:bartnik.3.dim}]
	It suffices to show that if $g, g' \in \sM^{\geq 0}_{1/2}(M)$, $\vol_g(M) = \vol_{g'}(M)$, and $(\bm{M}', \bm{g}') \in \cE_{B}(M, g', H=0)$, then
	\[ \mathfrak{m}_B(M, g, H=0) \leq \mathfrak{m}_{ADM}(\mathbf{M}', \mathbf{g}'). \]
	This readily implies the first two bullet points and \eqref{eq:bartnik.3.dim.general} by scaling. Moreover, \eqref{eq:bartnik.3.dim} follows from \eqref{eq:bartnik.3.dim.general} by evaluating the latter on a Schwarzschild manifold.
	
	So, assume the setup above. Let $\eps > 0$ be arbitrary. Invoke Proposition \ref{prop:m.k.plus.pairs.psc.mean.convex} to obtain a monotone PSC almost-cobordance $(\bm{N}, \bm{h})$ joining a minimal $(M, g)$ to a mean-convex $(M, (1+\eps) \psi^* g')$, $\psi \in \operatorname{Diff}_0(M)$. Then apply Lemma \ref{lemm:gluing.no.minimal.af} to concatenate $(\bm{N}, \bm{h})$ and $(\bm{M}', (1+\eps) \bm{g}')$. The manifolds $(\hat{\bm{M}}, \hat{\bm{g}}_\eta)$ satisfy $\hat{\bm{g}}_\eta \in \cE_{B}(M, g, H=0)$ by conclusions (2), (4), (5), and 
	\[ \mathfrak{m}_{ADM}(\hat{\bm{M}}, \hat{\bm{g}}_\eta) = (1+\eta) (1+\eps)^{n/2} \mathfrak{m}_{ADM}(\bm{M}', \bm{g}') \]
	by conclusion (3). Letting $\eta \to 0$ and then $\eps \to 0$, the result follows.
\end{proof}

\section{Proof of Theorem \ref{theo:bartnik.bray.n.dim}} \label{sec:applications.relativity.bartnik.bray}

We start this section by recalling a relative of Bartnik mass due to Bray \cite{Bray:penrose}. We call it the Bartnik--Bray mass. It is
\[ \mathfrak{m}_{BB}(M, g, H = 0) = \inf \{ \mathfrak{m}_{ADM}(\bm{M}, \bm{g}) : (\bm{M}, \bm{g}) \in \cE_{BB}(M, g, H=0) \}, \]
where $\cE_{BB}(M, g, H=0)$ denotes the set of complete, connected, asymptotically flat $(\bm{M}, \bm{g})$ with nonnegative scalar curvature, and minimal ($H=0$) boundary isometric to $(M, g)$ that strictly minimizes induced volume among all hypersurfaces that enclose it. It is easy to see, using elementary geometric measure theory, that
\[ \cE_{B}(M, g, H=0) \subset \cE_{BB}(M, g, H=0), \]
at least when $n \leq 6$. The same argument as in the introduction shows that, still,
\[ \cE_{BB}(M, g, H=0) \neq \emptyset \implies M \text{ is topologically PSC and } g \in \sM^{\geq 0}_{1/2}(M). \]
By Bray--Lee's Riemannian Penrose Inequality \cite{BrayLee:penrose}, and $2 \leq n \leq 6$,
\begin{equation} \label{eq:bartnik.vs.bartnik.bray}
	\mathfrak{m}_{B}(M, g, H = 0) \geq \mathfrak{m}_{BB}(M, g, H = 0) \geq \tfrac12 (\sigma_n^{-1} \vol_g(M))^{(n-1)/n},
\end{equation}
where $\sigma_n$ denotes the volume of the unit $n$-sphere in $\RR^{n+1}$.

Theorem \ref{theo:bartnik.bray.n.dim} will be proven below and will estimate $\mathfrak{m}_{BB}(M, g, H=0)$ from above by the lower bound in \eqref{eq:bartnik.vs.bartnik.bray}, and therefore compute $\mathfrak{m}_{BB}(M, g, H=0)$ precisely for all $2 \leq n \leq 6$. We will need the following technical lemma:

\begin{lemm}[Truncating Miao's smoothings] \label{lemm:bartnik.bray.truncated.miao}
	Let $(\bm{M}, \hat{\bm{g}})$ be a complete asymptotically flat Riemannian manifold-with-boundary with a ``corner'' along a closed hypersurface $\Sigma \subset \bm{M} \setminus \partial \bm{M}$ that satisfies Miao \cite{Miao:pmt.corners}'s mean curvature jump condition \cite[Theorem 1 (H)]{Miao:pmt.corners}. Assume, further, that
	\[ R_{\hat{\bm{g}}} \geq 0 \text{ on } \bm{M} \setminus \Sigma, \]
	\[ R_{\hat{\bm{g}}} > 0 \text{ on } \partial \bm{U}, \]
	where $\bm{U}$ is a neighborhood of $\Sigma$ with $\overline{\bm{U}} \subset \bm{M} \setminus \partial \bm{M}$ and $\partial \bm{U}$ compact.
	
	Then, there exists another neighborhood $\bm{W}$ of $\Sigma$ with $\overline{\bm{W}} \subset \bm{U}$, $\partial \bm{W}$ compact, and smooth metrics $\hat{\bm{g}}_\eta$ on $\bm{M}$ for $\eta > 0$ small such that:
	\begin{enumerate}
		\item $\hat{\bm{g}}_\eta \to \hat{\bm{g}}$ in $C^0(\bm{M}) \cap C^2_{\loc}(\bm{M} \setminus \Sigma)$ as $\eta \to 0$;
		\item $\mathfrak{m}_{ADM}(\bm{M}, \hat{\bm{g}}_\eta) \to \mathfrak{m}_{ADM}(\bm{M}, \hat{\bm{g}})$ as $\eta \to 0$;
		\item $R_{\hat{\bm{g}}_\eta} \geq 0$ on $\bm{M}$;
		\item $\hat{\bm{g}}_\eta \equiv \hat{\bm{g}}$ on $\bm{M} \setminus \bm{W}$.
	\end{enumerate}
\end{lemm}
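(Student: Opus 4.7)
The plan is to combine Miao's corner-smoothing \cite{Miao:pmt.corners} with a truncation step that exploits the strict positivity of $R_{\hat{\bm{g}}}$ on $\partial \bm{U}$. First I would mollify the corner as in \cite{Miao:pmt.corners} to produce smooth metrics $\bar{\bm{g}}_\eta$ on $\bm{M}$ that coincide with $\hat{\bm{g}}$ outside an $\eta$-collar of $\Sigma$, converge to $\hat{\bm{g}}$ in $C^0(\bm{M}) \cap C^2_{\loc}(\bm{M} \setminus \Sigma)$, and satisfy $R_{\bar{\bm{g}}_\eta}^- \to 0$ in $L^{n/2}$ by the matched mean-curvature hypothesis. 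Next I would solve Miao's conformal equation
\[
-c_n \Delta_{\bar{\bm{g}}_\eta} u_\eta + R_{\bar{\bm{g}}_\eta} u_\eta = 0, \qquad u_\eta \to 1 \text{ at infinity},
\]
with $c_n = 4(n-1)/(n-2)$, obtaining a positive $u_\eta \to 1$ globally such that $u_\eta^{4/(n-2)} \bar{\bm{g}}_\eta$ is scalar-flat and has ADM mass converging to $\mathfrak{m}_{ADM}(\bm{M},\hat{\bm{g}})$. I do not want to use this conformally changed metric directly, however, since that would spoil conclusion (4).

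The new ingredient is to truncate $u_\eta$. Using $R_{\hat{\bm{g}}} > 0$ on $\partial \bm{U}$ and continuity, I would pick $\delta > 0$ and a neighborhood $\bm{A}$ of $\partial \bm{U}$, compactly contained in $\bm{U}$, on which $R_{\hat{\bm{g}}} \geq 2\delta$; then pick $\bm{W}$ with $\partial \bm{W} \subset \operatorname{int} \bm{A}$ and a smooth cut-off $\chi \in C^\infty(\bm{M})$ with $\chi \equiv 1$ on a neighborhood of $\Sigma$ containing the $\eta$-collar (for small $\eta$), $\chi \equiv 0$ outside $\bm{W}$, and $\supp \nabla \chi \subset \bm{A}$. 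Then I would set
\[
u'_\eta := 1 + \chi(u_\eta - 1), \qquad \hat{\bm{g}}_\eta := (u'_\eta)^{4/(n-2)} \bar{\bm{g}}_\eta.
\]
Outside $\bm{W}$, $u'_\eta \equiv 1$ and $\bar{\bm{g}}_\eta \equiv \hat{\bm{g}}$ for small $\eta$, so $\hat{\bm{g}}_\eta \equiv \hat{\bm{g}}$ there, giving (4) and in fact sharpening (2) to equality $\mathfrak{m}_{ADM}(\bm{M},\hat{\bm{g}}_\eta) = \mathfrak{m}_{ADM}(\bm{M},\hat{\bm{g}})$. Conclusion (1) follows from the $C^0$-closeness of $\bar{\bm{g}}_\eta$ to $\hat{\bm{g}}$ together with $u_\eta \to 1$ in $C^2_{\loc}(\bm{M} \setminus \Sigma)$ by interior elliptic regularity.

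The hard part will be conclusion (3). From the conformal scalar-curvature formula
\[
R_{\hat{\bm{g}}_\eta} = (u'_\eta)^{-(n+2)/(n-2)} \bigl(-c_n \Delta_{\bar{\bm{g}}_\eta} u'_\eta + R_{\bar{\bm{g}}_\eta} u'_\eta\bigr),
\]
there are three regions to check. Where $\chi \equiv 1$, $u'_\eta = u_\eta$ satisfies Miao's PDE, so $R_{\hat{\bm{g}}_\eta} \equiv 0$. Where $\chi \equiv 0$, $u'_\eta \equiv 1$ and $\bar{\bm{g}}_\eta \equiv \hat{\bm{g}}$, so $R_{\hat{\bm{g}}_\eta} = R_{\hat{\bm{g}}} \geq 0$. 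The delicate region is the transition annulus, contained in $\bm{A}$, where
\[
\Delta_{\bar{\bm{g}}_\eta} u'_\eta = (u_\eta - 1)\, \Delta_{\bar{\bm{g}}_\eta} \chi + 2 \langle \nabla \chi, \nabla u_\eta \rangle_{\bar{\bm{g}}_\eta} + \chi\, \Delta_{\bar{\bm{g}}_\eta} u_\eta.
\]
Since $\bar{\bm{g}}_\eta \equiv \hat{\bm{g}}$ on $\bm{A}$ for small $\eta$ and $u_\eta$ satisfies the fixed-coefficient uniformly elliptic equation $-c_n \Delta_{\hat{\bm{g}}} u_\eta + R_{\hat{\bm{g}}} u_\eta = 0$ there with $u_\eta \to 1$ in $C^0$, interior elliptic regularity upgrades this to $u_\eta \to 1$ in $C^2(\bm{A})$. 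Each term on the right above is then $o_\eta(1)$, while $R_{\bar{\bm{g}}_\eta} \geq 2\delta$ on $\bm{A}$, so $-c_n \Delta_{\bar{\bm{g}}_\eta} u'_\eta + R_{\bar{\bm{g}}_\eta} u'_\eta \geq \delta > 0$ on $\bm{A}$ for all sufficiently small $\eta$. Combining the three regions yields (3) and completes the proof.
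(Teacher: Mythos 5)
Your proof is correct and follows essentially the same route as the paper's: Miao's mollification $\bar{\bm{g}}_\eta$ with conformal factor $u_\eta$, then the interpolated conformal factor $u'_\eta = 1 + \chi(u_\eta - 1)$ (which, with $\chi = 1-\zeta$, is exactly the paper's $\zeta + (1-\zeta)u_\eta$), and the same case analysis for (3) exploiting $R_{\hat{\bm{g}}}>0$ on the transition annulus together with $u_\eta \to 1$ in $C^2_{\loc}$. Your observation that (2) sharpens to exact equality of ADM masses (since $\hat{\bm{g}}_\eta \equiv \hat{\bm{g}}$ outside a compact set) is a valid and slightly cleaner remark than the paper's appeal to Miao's mass-convergence lemma.
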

\begin{proof}
	Denote Miao's mollifications from \cite[Proposition 3.1]{Miao:pmt.corners} by $\widetilde{\bm{g}}_\eta$, with $\eta > 0$ small. Denote by their conformal parameters from \cite[Section 4.1]{Miao:pmt.corners} by $u_\eta > 0$, with $\eta > 0$ small. 
	
	Fix a smooth cut-off $\zeta : \bm{M} \to [0, 1]$ with
	\[ \zeta \equiv 0 \text{ near } \Sigma, \; \zeta \equiv 1 \text{ outside } \bm{U}, \; \spt \nabla \zeta \subset \subset \{ R_{\hat{\bm{g}}} > 0 \}. \]
	Consider $\hat{\bm{g}}_\eta := (\zeta + (1-\zeta) u_\eta)^{4/(n-1)} \widetilde{\bm{g}}_\eta$. This family will satisfy:
	\begin{itemize}
		\item (1) by \cite[Proposition 4.1]{Miao:pmt.corners};
		\item (2) by \cite[Lemma 4.2]{Miao:pmt.corners} since $\zeta \equiv 0$ or $1$ at $\infty$;
		\item (3) by \cite[(45)]{Miao:pmt.corners} outside $\spt \zeta$, by $R_{\hat{\bm{g}}} \geq 0$ on $\{ \zeta \equiv 1 \}$, and by (1) and the $u_\eta \to 1$ convergence in $C^2_{\loc}$ away from $\Sigma$ by \cite[Proposition 4.1]{Miao:pmt.corners} on $\spt \nabla \zeta \subset \subset \{ R_{\hat{\bm{g}}} > 0 \}$;
		\item (4) by construction.
	\end{itemize}
	The result follows.
\end{proof}

\begin{proof}[Proof of Theorem \ref{theo:bartnik.bray.n.dim}]
	We show that if $M'$ is closed and orientable, $g' \in \sM^{\geq 0}_{1/2}(M')$, and $(\bm{M}', \bm{g}') \in \cE_{BB}(M', g', H=0)$, then
	\[ \mathfrak{m}_{BB}(M, g, H=0) \leq \mathfrak{m}_{ADM}(\mathbf{M}', \mathbf{g}') \]
	for all $g \in \operatorname{LinClos}[\sM^{>0}_{1/2}(M)]$ with $\vol_g(M) = \vol_{g'}(M')$. This readily implies \eqref{eq:bartnik.bray.n.dim.general} by scaling. Moreover, \eqref{eq:bartnik.bray.n.dim} follows from \eqref{eq:bartnik.bray.n.dim.general} by evaluating \eqref{eq:bartnik.bray.n.dim.general} on a Schwarzschild manifold.
	
	So, assume the setup above. Take any $(\bm{M}, \bm{g}) \in \cE_{BB}(M, g, H=0)$. 
	
	\textbf{Step 1} (truncating our extensions). Let $R$, $R'$ be large enough that the coordinate spheres $\{ r = \tfrac12 R \}$, $\{ r = \tfrac12 R' \}$ are in the asymptotically flat regimes of $(\bm{M}, \bm{g})$, $(\bm{M}', \bm{g}')$, respectively. We may additionally choose $R$, $R'$ so that the spheres $\{ r = R \}$, $\{ r = R' \}$ in the corresponding manifolds have the same volume radius $R_V$ and whose unit normalizations are close to each other and to the unit sphere, where the closeness is to be in the sense of Remark \ref{rema:m.k.plus.pairs.nearly.constant}, which we will invoke later.
	
	Then, first cut $(\bm{M}, \bm{g})$ along the large coordinate sphere $\Sigma = \{ r = R \}$ in the single asymptotically flat end and discard the unbounded component. Call $(\bm{N}_1, \bm{h}_1)$ the metric completion of what's left. It is a smooth manifold-with-boundary. Its boundary consists of:
	\begin{itemize}
		\item a minimal portion, $(M, g)$, and
		\item a mean-convex portion (mean curvature vector pointing to the interior) which is a very large, nearly round sphere of volume radius $R_V$ (note that $R_V \to \infty$ as $R \to \infty$).
	\end{itemize}
	Likewise, cut $(\bm{M}', \bm{g}')$ along $\Sigma' = \{ r = R' \}$ and discard the unbounded component, calling the metric completion of what's left $(\bm{N}_3, \bm{h}_3)$. Its boundary consists of:
	\begin{itemize}
		\item a minimal portion, $(M', g')$, and
		\item a mean-convex portion (mean curvature vector pointing to the interior) which is a very large, nearly round sphere of volume radius $R_V$ (note that $R_V \to \infty$ as $R \to \infty$).
	\end{itemize}

	\textbf{Step 2} (a bridge between $\Sigma$, $\Sigma'$). By Proposition \ref{prop:m.k.plus.pairs.psc.minimal} (see Remark \ref{rema:m.k.plus.pairs.nearly.constant}), there exists a PSC $(\bm{N}_2, \bm{h}_2)$, $\bm{N}_2 = \SS^n \times [0,1]$, such that:
	\begin{itemize}
		\item $\SS^n \times \{0\} \subset \partial (\bm{N}_2, \bm{h}_2)$ has induced metric $\bm{g} \restr \Sigma$,
		\item $\SS^n \times \{1\} \subset \partial (\bm{N}_2, \bm{h}_2)$ has induced metric $\bm{g}' \restr \Sigma'$, after a diffeomorphism $\psi \in \operatorname{Diff}_0(\mathbf{S}^n)$, and
		\item all $\SS^n \times \{t\}$, $t \in [0,1]$, are minimal.
	\end{itemize}
	
	\textbf{Step 3} (putting the pieces together). Glue together
	\[ (\hat{\bm{M}}, \hat{\bm{g}}) := (\bm{N}_1, \bm{h}_1) \sqcup_{\operatorname{Id}} (\bm{N}_2, \bm{h}_2) \sqcup_{\psi} (\bm{N}_3, \bm{h}_3) \sqcup_{\operatorname{Id}} (\bm{M}', \bm{g}') \]
	by identifying the corresponding isometric boundaries among consecutive pairs in the listing above. This results in an asymptotically flat manifold with ``corners'' along:
	\begin{itemize}
		\item the identification of the mean-convex component of $\partial \bm{N}_1$ with its isometric copy in $\partial \bm{N}_2$;
		\item the identification of the mean-convex component of $\partial \bm{N}_3$ with its isometric copy in $\partial \bm{N}_2$;
		\item the identification of $\partial \bm{M}'$ with its isometric copy in $\partial \bm{N}_3$.
	\end{itemize}
	The metric $\hat{\bm{g}}$ is not smooth across the corner hypersurfaces, only Lipschitz. Nonetheless, its boundary $\partial (\hat{\bm{M}}, \hat{\bm{g}}) = (M, g)$ satisfies:
	\begin{claim} \label{clai:bartnik.bray.2.dim.disconnected.M.minimizing}
		$\partial (\hat{\bm{M}}, \hat{\bm{g}}) \subset (\hat{\bm{M}}, \hat{\bm{g}})$ is homologically $\hat{\bm{g}}$-volume-minimizing.
	\end{claim}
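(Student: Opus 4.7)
The claim asserts that $|\partial F \setminus M|_{\hat{\bm{g}}} \geq |M|_g$ for every bounded Caccioppoli set $F \subset \hat{\bm{M}}$ with $M \subset \partial F$. The plan combines three minimizing properties: (i) $M$ is strictly outer-minimizing in $(\bm{M}, \bm{g})$, since $(\bm{M}, \bm{g}) \in \cE_{BB}(M, g, H=0)$; (ii) $M'$ is strictly outer-minimizing in $(\bm{M}', \bm{g}')$, since $(\bm{M}', \bm{g}') \in \cE_{BB}(M', g', H=0)$; and (iii) the bridge $(\bm{N}_2, \bm{h}_2)$ is foliated by minimal slices (Proposition \ref{prop:m.k.plus.pairs.psc.minimal} (4)) of equal induced volume (Proposition \ref{prop:m.k.plus.pairs.twist} (2)), which produces the divergence-free calibrating unit vector field $X_2 := (Au)^{-1} \partial_t$ on $\bm{N}_2$ satisfying $\operatorname{div}_{\bm{h}_2} X_2 = -H_t = 0$.

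Decomposing $F = F_1 \sqcup F_2 \sqcup F_3 \sqcup F_4$ with $F_i := F \cap W_i$ for $(W_1, W_2, W_3, W_4) := (\bm{N}_1, \bm{N}_2, \bm{N}_3, \bm{M}')$, and setting $\tilde{\Sigma}_i := \partial F_i \cap \operatorname{int} W_i$, the divergence theorem applied to $F_2$ with $X_2$ gives
\begin{equation*}
|\tilde{\Sigma}_2| \geq \bigl| |F \cap \Sigma|_{\bm{g}} - |F \cap \Sigma'|_{\bm{g}'} \bigr|,
\end{equation*}
and an analogous bound on $|\tilde{\Sigma}_3|$ involving $|F \cap \Sigma'|_{\bm{g}'}$ and $|F \cap M'|_{g'}$ follows from a parallel calibration in $\bm{N}_3 \subset \bm{M}'$ (e.g., via the asymptotically radial unit normal, divergence-free up to $O(1/R')$-slack absorbed by the strict-ness of outer-minimization in the large-$R'$ limit).

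Next, outer-minimization of $M$ in $\bm{M} \supset \bm{N}_1$ applied to $F_1$ gives $|F \cap \Sigma|_{\bm{g}} + |\tilde{\Sigma}_1| \geq |M|_g$, and outer-minimization of $M'$ in $\bm{M}'$ applied to $F_4$ (augmented by a thin collar of $M' \setminus (F \cap M')$ in $\bm{M}'$, with auxiliary width $\eps \to 0$) gives $|\tilde{\Sigma}_4| \geq |F \cap M'|_{g'}$. Summing all four inequalities and invoking the hypothesis $|M|_g = |M'|_{g'}$, the internal slice-volume terms cancel against the bridge-calibration bounds to yield $\sum_i |\tilde{\Sigma}_i| \geq |M|_g$. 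The main obstacle is the combinatorial bookkeeping across the three internal hypersurfaces $\Sigma, \Sigma', M'$, handled by a case analysis on the relative sizes of $|F \cap \Sigma|, |F \cap \Sigma'|, |F \cap M'|$: in each case the triangle inequality $|\tilde{\Sigma}_2| + |\tilde{\Sigma}_3| \geq \bigl| |F \cap \Sigma| - |F \cap M'| \bigr|$, paired with either the $M$- or $M'$-outer-minimization bound (whichever is tight in that case), closes the argument.
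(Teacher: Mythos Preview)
Your argument has a genuine gap at step (iii), the claimed calibration on $\bm{N}_3$. Recall that $\bm{N}_3$ is the compact portion of an \emph{arbitrary} element $(\bm{M}',\bm{g}')\in\cE_{BB}(M',g',H=0)$ lying between the horizon $M'$ and a large coordinate sphere $\Sigma'$. Away from the asymptotically flat end there is no reason for an ``asymptotically radial unit normal'' to be approximately divergence-free: near $M'$ the geometry of $\bm{g}'$ is uncontrolled beyond $R_{\bm{g}'}\ge 0$ and the outer-minimizing property of $M'$, and even in the asymptotic regime the divergence of such a field is $\approx n/r$, not $O(1/R')$. So the inequality $|\tilde\Sigma_3|\ge\bigl||F\cap\Sigma'|-|F\cap M'|\bigr|$ is unjustified, and without it your bookkeeping cannot close. (Your steps (I), (II), (IV) are essentially correct, and the telescoping sum \emph{would} yield $\sum_i|\tilde\Sigma_i|\ge|M|_g$ if (III) held; incidentally the hypothesis $|M|_g=|M'|_{g'}$ plays no role in that computation.)

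The paper avoids this obstacle by exploiting the freedom to take $R,R'$ arbitrarily large. Rather than work with a general competitor, one takes a $\hat{\bm{g}}$-volume-minimizing $S$ homologous to $M$, so that a priori $\vol_{\hat{\bm{g}}}(S)\le\vol_g(M)$. The monotonicity formula in the nearly-Euclidean annuli $\{\tfrac12 R\le r\le R\}\subset\bm{N}_1$ and $\{\tfrac12 R'\le r\le R'\}\subset\bm{N}_3$ then forces $S$ to be trapped in exactly one of three regions: either $\{r\le\tfrac12 R\}\subset\bm{N}_1$ (where outer-minimization of $M$ in $(\bm{M},\bm{g})$ finishes), or $\{r\le\tfrac12 R'\}\cup\bm{M}'$ (where outer-minimization of $M'$ in $(\bm{M}',\bm{g}')$ together with $\vol_{g'}(M')=\vol_g(M)$ finishes), or the central region $(\bm{N}_1\setminus\{r\le\tfrac12 R\})\cup\bm{N}_2\cup(\bm{N}_3\setminus\{r\le\tfrac12 R'\})$, which admits a distance-decreasing map into a round cylinder $\SS^n(R_V/2)\times\RR$ and hence forces $\vol_{\hat{\bm{g}}}(S)>\vol_g(M)$. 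No calibration on $\bm{N}_3$ is ever needed.
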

	\begin{proof}[Proof of Claim]
		Consider a $\hat{\bm{g}}$-volume-minimizing $S \subset \hat{\bm{M}}$. First,
		\begin{equation} \label{eq:bartnik.bray.2.dim.disconnected.M.minimizing}
			\vol_{\hat{\bm{g}}}(S) \leq \vol_g(M),
		\end{equation} 
		since $(M, g) = \partial (\hat{\bm{M}}, \hat{\bm{g}}) $ is a viable competitor. The poorly understood regularity of $S$ across the corners will not be relevant.
		
		Consider $\cR := \{ r \leq \tfrac12 R \} \subset \bm{N}_1$. If $S \cap \cR \neq \emptyset$, we are done:
		\begin{itemize}
			\item If $S \subset \cR$, we are done by $(\bm{M}, \bm{g}) \in \cE_{BB}(M, g, H=0)$.
			\item If $S \not \subset \cR$, the monotonicity formula along the asymptotic regime forces $\vol_{\hat{\bm{g}}}(S \cap (\bm{N}_1 \setminus \cR)) \to \infty$ as $R \to \infty$, contradicting \eqref{eq:bartnik.bray.2.dim.disconnected.M.minimizing}.
		\end{itemize}
		
		Likewise, take $\cR' := \{ r \leq \tfrac12 R' \} \subset \bm{N}_3$. If $S \cap (\cR' \cup \bm{M}') \neq \emptyset$, we are done too:
		\begin{itemize}
			\item If $S \subset \cR' \cup \bm{M}'$, we are done by $(\bm{M}', \bm{g}') \in \cE_{BB}(M', g', H=0)$.
			\item If $S \not \subset \cR' \cup \bm{M}'$, the monotonicity formula along the asymptotic regime forces $\vol_{\hat{\bm{g}}}(S \cap (\bm{N}_3 \setminus \cR')) \to \infty$ as $R' \to \infty$, contradicting \eqref{eq:bartnik.bray.2.dim.disconnected.M.minimizing}.
		\end{itemize}
		
		So, $S \subset (\bm{N}_1 \setminus \cR) \cup \bm{N}_2 \cup (\bm{N}_3 \setminus \cR')$. This region admits distance-decreasing diffeomorphism into $(\SS^n \times \RR, g_{\SS^n(R_V/2)} + dt^2)$, where $g_{\SS^n(R_V/2)}$ is the metric of $\SS^n$ with radius $R_V/2$. Thus, $\vol_{\hat{\bm{g}}}(S) > \vol_g(M)$, contradicting \eqref{eq:bartnik.bray.2.dim.disconnected.M.minimizing}.
	\end{proof}

	\textbf{Step 4} (smoothing). Let $\eps > 0$. Take $(\bm{N}_0, \bm{h}_0)$ to be a monotone PSC almost cobordance from $(M, g)$ to $(M, (1+\eps) \varphi^* g)$ by Proposition \ref{prop:m.k.plus.pairs.psc.mean.convex} (see Remark \ref{rema:m.k.plus.pairs.nearly.constant}) performed one component at a time, where $\varphi \in \operatorname{Diff}_0(M)$. Then, consider the manifold with ``corners:''
	\[ (\breve{\bm{M}}, \breve{\bm{g}}_\eps) := (\bm{N}_0, \bm{g}_0) \sqcup_\varphi (\hat{\bm{M}}, (1+\eps) \hat{\bm{g}}). \]
	Its boundary $(M, g)$ is strictly homologically minimizing by combining Claim \ref{clai:bartnik.bray.2.dim.disconnected.M.minimizing} above and the mean-convex foliation of the interior of $(\bm{N}_0, \bm{h}_0)$. Consider the smoothings $(\breve{\bm{M}}, \breve{\bm{g}}_{\eps,\eta})$ obtained from Lemma \ref{lemm:bartnik.bray.truncated.miao}, where $\bm{U}$ is taken to be a small neighborhood of $\hat{\bm{M}}$ inside $\breve{\bm{M}}$. Then, the volume-minimization property is maintained by the smoothings of Lemma \ref{lemm:gluing.no.minimal.af} due to conclusions (1) and (4), and thus
	\[ (\breve{\bm{M}}, \breve{\bm{g}}_{\eps,\eta}) \in \cE_{BB}(M, g, H=0) \]
	for sufficiently small $\eta > 0$. By conclusion (2),
	\[ \lim_{\eta \to 0} \mathfrak{m}(\breve{\bm{M}}, \breve{\bm{g}}_{\eps,\eta}) = (1+\eps)^{n/2} \mathfrak{m}_{ADM}(\bm{M}', \bm{g}'). \]
	Letting $\eps \to 0$, the result follows.
\end{proof}

Here is what we know about Bartnik--Bray extendibility in $n$ dimensions:

\begin{lemm} \label{lemm:bartnik.bray.n.dim.extensions}
	We have:
	\begin{itemize}
		\item every $M$ that is the tubular neighborhood of a codimension $\geq 3$ submanifold of $\RR^{n+1}$ is Bartnik--Bray extendible;
		\item every disjoint finite union of $n$-dimensional Bartnik--Bray extendible manifolds is Bartnik--Bray extendible.
	\end{itemize}
\end{lemm}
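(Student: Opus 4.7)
The plan addresses each bullet separately, reducing in both cases to gluing explicit geometric pieces while preserving asymptotic flatness, nonnegative scalar curvature, the minimal boundary condition, and strict outer-minimization.

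\textbf{Bullet (1).} Let $S \subset \RR^n$ be closed of codimension $\geq 3$ and $N := \bar B_r(S)$ for small $r > 0$; endow $M := \partial N$ with the induced Euclidean metric $g$. Because the sphere-fibre direction of $\partial N$ has dimension $\geq 2$ and radius $\sim r$, one checks $R_g \sim r^{-2} > 0$ for small $r$, so $g \in \sM^{>0}_\infty(M) \subset \sM^{>0}_{1/2}(M)$. Set $\bm{M} := \RR^n \setminus \mathrm{int}(N)$. Starting from the Euclidean metric, I would bend it in a thin collar $M \times [0, \delta)$ of $M$ to obtain $\bm{g}$ which coincides with the Euclidean metric for $s \geq \delta$, induces the original $g$ on $M$, makes $M$ totally geodesic (hence minimal), and has $R_{\bm{g}} \geq 0$ throughout. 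Concretely write $\bm{g} = ds^2 + G(s)$ in the collar with $G(0) = g$ and $G'(0) = 0$, and tune $G(\cdot)$ using Lemma \ref{lemm:slicing.formula.curv} so that $R_{\bm{g}} \geq 0$, exploiting the strict positivity $R_g > 0$; this is a model-case analogue of Proposition \ref{prop:m.k.plus.pairs.psc.minimal}. Then $(\bm{M}, \bm{g})$ is complete, asymptotically flat, has $R \geq 0$, and has minimal boundary $M$. Strict outer-minimization of $M$ in $(\bm{M}, \bm{g})$ follows because $\vol_g(M) = O(r^{n-1-\dim S}) \to 0$ as $r \to 0$, while any hypersurface $\Sigma \subset \bm{M}$ homologous to $M$ must enclose $S$ and so has volume bounded below by an $r$-independent isoperimetric constant; the $C^0$-smallness of the collar modification preserves this strict gap.

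\textbf{Bullet (2).} Suppose $(\bm{M}_i, \bm{g}_i) \in \cE_{BB}(M_i, g_i, H=0)$ for $i = 1, \ldots, k$. I would produce a single connected extension of $M := \sqcup_i M_i$ using a multi-Schwarzschild scaffold. Pick widely separated points $p_1, \ldots, p_k \in \RR^n$ and small masses $m_i > 0$, and set $\phi(x) := 1 + \sum_{i=1}^{k} m_i/(2|x - p_i|^{n-2})$. Then $\hat{\bm{g}} := \phi^{4/(n-2)} g_{\RR^n}$ on $\RR^n \setminus \{p_1, \ldots, p_k\}$ has $R_{\hat{\bm{g}}} \equiv 0$ and $k+1$ asymptotically flat ends, with each $p_i$-end asymptotic to a mass-$m_i$ Schwarzschild end in isotropic coordinates. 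For $\min_{i \neq j} |p_i - p_j|$ large compared to $\max_i m_i^{1/(n-2)}$, there are $k$ pairwise disjoint, strictly outer-minimizing horizons $\widehat{H}_i$ near the $p_i$-ends. Cut the scaffold at slices near each $\widehat{H}_i$, cut each $\bm{M}_i$ in its asymptotic end at a matching slice, identify the cuts via approximate isometries, and smooth the resulting corners using Lemma \ref{lemm:bartnik.bray.truncated.miao} (after, if necessary, a preliminary conformal Corvino--Schoen-type adjustment aligning the asymptotic profile of $\bm{g}_i$ with a Schwarzschild profile). The result is connected, complete, asymptotically flat, has $R \geq 0$, and has minimal boundary $M$; strict outer-minimization of $M$ inherits from strictness for each $M_i \subset \bm{M}_i$ and each $\widehat{H}_i$ in the scaffold, since any competing enclosure decomposes into local pieces plus bridging pieces through the scaffold, and well-separated $p_i$ force the bridging pieces to contribute a uniform volume excess.

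\textbf{Main obstacles.} In bullet (1), the analytic content is the strict outer-minimization of $M$, for which one must combine the Euclidean isoperimetric inequality with $C^0$-control on the collar deformation. In bullet (2), the main subtlety is matching the asymptotic structure of the abstractly given $\bm{M}_i$ to the explicit multi-Schwarzschild scaffold: the abstract asymptotic flatness of $\bm{M}_i$ does not automatically yield an exact Schwarzschild profile, so Corvino--Schoen style asymptotic deformations or Miao-style corner smoothings are needed, and one must further verify that these smoothings do not introduce interior minimal hypersurfaces that could destroy strict outer-minimization of the combined boundary.
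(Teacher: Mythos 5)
Your two-bullet decomposition mirrors the paper's, but the route is genuinely different in both places, and in both places there is a gap that the paper's argument avoids.

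For the first bullet the paper simply cites Dahl--Larsson \cite{DahlLarsson:unit.normal.bundle.horizons}, whereas you attempt a direct bending of the Euclidean exterior. Setting aside whether the collar deformation can be executed with $R\geq 0$ (plausible for codimension $\geq 3$ in the Gromov--Lawson spirit), your strict-outer-minimization argument is incoherent as stated: you assert that every hypersurface homologous to $M$ has volume bounded below by an $r$-independent isoperimetric constant while $\vol_g(M)\to 0$; but $M$ is itself homologous to $M$, so this would force $\vol_g(M)$ above that same constant, a contradiction. What is actually needed is a comparison that rules out competitors inside the collar itself, e.g.\ a strictly mean-convex foliation of a one-sided neighborhood of $M$ (or strict stability), which your bending sketch does not supply. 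The isoperimetric argument only controls competitors that leave a fixed neighborhood of $S$, not nearby ones.

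For the second bullet you replace the paper's Carlotto--Schoen gluing with a Brill--Lindquist-type multi-Schwarzschild scaffold. This is an attractive alternative, but the asymptotic-to-asymptotic matching step is where it gets stuck. Miao-type corner smoothing (Lemma \ref{lemm:bartnik.bray.truncated.miao}) requires the induced metrics on the gluing hypersurface to match \emph{exactly}, allowing only a jump in mean curvature; ``approximate isometries'' between coordinate slices of $\bm{M}_i$ and slices of the scaffold near $\widehat H_i$ are not enough. Corvino--Schoen-type asymptotic gluing could in principle upgrade an approximate match to an exact Schwarzschild end, but you neither specify which version applies (the standard one targets $R\equiv 0$, not $R\geq 0$, and does not obviously preserve the no-interior-minimal-hypersurface/outer-minimizing structure in the region being modified) nor verify the hypotheses. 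The paper sidesteps the issue entirely: using Theorem \ref{theo:bartnik.bray.n.dim} and Corollary \ref{coro:psc.round.foliations.bent.schwarzschild} it first replaces each $(\bm{M}_i,\bm{g}_i)$ with an extension that is \emph{exactly} Schwarzschild outside a compact set, then applies Carlotto--Schoen to Schwarzschild exteriors so that the subsequent concatenation is an exact metric match with only a mean-curvature corner, which Miao smoothing handles cleanly. If you want to retain the multi-Schwarzschild scaffold, you would also need to rescale each $(\bm{M}_i,\bm{g}_i)$ (as the paper does) to match horizon/neck radii, and then face the additional issue that the coordinate spheres near $p_i$ in the scaffold are only approximately round (the other punctures perturb the conformal factor), so again only approximately match a Schwarzschild slice; this is precisely the mismatch the paper's route avoids.
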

\begin{proof}
	First, the extendibility of tubular neighborhoods of smooth codimension $\geq 3$ submanifolds of $\RR^{n+1}$ follows from \cite{DahlLarsson:unit.normal.bundle.horizons}. In fact the extensions constructed are Bartnik extensions, too.
	
	It remains to verify the extendibility of disjoint finite unions, which requires a bit of attention and is broken down into three steps. To that end, let $M_i$, $i = 1, \ldots, p$, denote our extendible manifolds, and let $g_i$ be an extendible metric on $M_i$.
	
	\textbf{Step 1} (concatenate suitable Schwarzschild manifolds). First, fix $i = 1, \ldots, p$. Consider a mass $m_i$ exterior Schwarzschild region whose horizon is a round sphere with volume $> \vol_{g_i}(M_i)$; e.g.,
	\begin{equation} \label{eq:bartnik.bray.n.dim.disconnected.extension.mass.individual}
		m_i := \tfrac12 (\sigma_n^{-1} (\vol_{g_i}(M_i)+\delta))^{(n-1)/n},
	\end{equation}
	though the value of this $m_i$ will not be relevant. Call this exterior manifold $(\bm{M}_i, \bm{g}_i)$. We now invoke the Carlotto--Schoen \cite{CarlottoSchoen:gluing} gluing construction to concatenate portions of $(\bm{M}_i, \bm{g}_i)$, $i = 1, \ldots, p$, into a single $(\bm{M}, \bm{g})$, which is complete, asymptotically flat, with nonnegative scalar curvature, and minimal boundary isometric to $(M, g)$.\footnote{The gluing construction can be localized away from the boundary, or be applied to the double of our manifolds and then cut along the Schwarzschild horizons.} Even though the gluing theorem offers extremely refined conclusions, all we need is that:
	\begin{enumerate}
		\item $(\bm{M}, \bm{g})$ is $\delta$-close in $C^2$ to being flat on the set $\cV$ outside the $\delta^{-1}$-neighborhood of its boundary;
		\item the $4\delta^{-1}$-neighborhood of the boundary of $(\bm{M}, \bm{g})$ consists of disjoint neighborhoods $\bm{U}_i$, $i = 1, \ldots, p$, that are isometric to neighborhoods of the boundaries of $(\bm{M}_i, \bm{g}_i)$.
	\end{enumerate}
	Using this, we have:
	
	\begin{claim} \label{clai:bartnik.bray.n.dim.disconnected.union.minimizing.1}
		$\partial (\bm{M}, \bm{g}) \subset (\bm{M}, \bm{g})$ is strictly homologically minimizing for small $\delta$.
	\end{claim}
	\begin{proof}
		Consider a $\bm{g}$-volume-minimizing $S \subset \bm{M}$. It satisfies 
		\begin{equation} \label{eq:bartnik.bray.n.dim.disconnected.M.minimizing}
			\vol_{\bm{g}}(S) \leq \vol_{g}(M) + O(\delta),
		\end{equation} 
		since $(M, g) = \partial (\bm{M}, \bm{g})$ is a viable competitor and \eqref{eq:bartnik.bray.n.dim.disconnected.extension.mass.individual} holds. Note that $S \subset \bigcup_{i=1}^p \bm{U}_i$. Indeed, if not, then the monotonicity formula and (1) above would yield $\vol_{\bm{g}}(S) \to \infty$ as $\delta \to 0$, violating  \eqref{eq:bartnik.bray.n.dim.disconnected.M.minimizing} for small $\delta$. Now that $S \subset \bigcup_{i=1}^p \bm{U}_i$, the result follows from the isometry in (2) and the volume-minimizing nature of the boundaries of $(\bm{M}_i, \bm{g}_i)$.
	\end{proof} 	
	The gluing theorem also gives $\mathfrak{m}_{ADM}(\bm{M}, \bm{g}) \leq \sum_{i=1}^p m_i + \delta$, but we will disregard this bound here.\footnote{It is tempting to conjecture $\mathfrak{m}_B(M, g, H=0) \leq \tfrac12 \sum_{i=1}^p (\sigma_n^{-1} \vol_{g_i}(M_i))^{(n-1)/n}$ based on this bound and \eqref{eq:bartnik.bray.n.dim.disconnected.extension.mass.individual}. We do not pursue this here.}
	
	\textbf{Step 2} (re-introducing $(M, g)$ as the boundary). Note that $(\bm{M}, \bm{g})$ is precisely Schwarzschild near its boundary, so it's manifestly not a contender for $\cE_{BB}(M, g, H=0)$. We fix this issue in this step. 
	
	By Theorem \ref{theo:bartnik.bray.n.dim}, each of $(M_i, g_i)$ has a valid extension that is exactly Schwarzschild outside a compact set. Using Corollary \ref{coro:psc.round.foliations.bent.schwarzschild}, we can ensure that this extension has positive scalar curvature along a hypersurface that bounds $(M_i, g_i)$. After rescaling each of $(\bm{M}_i, \bm{g}_i)$ (thus giving up mass control, by a lot) and using Corollary \ref{coro:psc.round.foliations.bent.schwarzschild} again, we can ensure that a large mean-convex coordinate sphere of our extension of $(M_i, g_i)$ will be isometric to the boundary of $(\bm{M}_i, \bm{g}_i)$. Call $(\bm{N}_i, \bm{h}_i)$ the region of the extension of $(M_i, g_i)$ up until the aforementioned large mean-convex coordinate sphere. Consider the gluing
	\[ (\hat{\bm{M}}, \hat{\bm{g}}) := (\bm{M}, \bm{g}) \sqcup_{\psi_1} (\bm{N}_1, \bm{h}_1) \sqcup _{\psi_2} \cdots \sqcup_{\psi_p} (\bm{N}_p, \bm{h}_p). \]
	The Riemannian metric has corners along the gluing hypersurfaces, as usual, but the mean-curvature jumps are of Miao-type. Note that the $(\bm{M}, \bm{g})$ is the suitably rescaled copy of the Carlotto--Schoen gluing, since each of $(\bm{M}_i, \bm{g}_i)$ was scaled.
	
	\begin{claim} \label{clai:bartnik.bray.2.dim.disconnected.union.minimizing.2}
		$\partial (\hat{\bm{M}}, \hat{\bm{g}}) \subset (\hat{\bm{M}}, \hat{\bm{g}})$ is strictly homologically minimizing
	\end{claim}
	\begin{proof}[Proof of claim]
		Any $S \subset \hat{\bm{M}}$ homologous to $\partial \hat{\bm{M}}$ can be improved to a $S' \subset \cup_{i=1}^p \bm{N}_i$ using Claim \ref{clai:bartnik.bray.n.dim.disconnected.union.minimizing.1}, and $S'$ can be improved to $\partial \hat{\bm{M}}$ using the strictly minimizing property in each $(\bm{N}_i, \bm{h}_i)$.
	\end{proof}
	
	\textbf{Step 3} (smoothing). Turn $(\hat{\bm{M}}, \hat{\bm{g}})$ into an element of $\cE_{BB}(M, g, H=0)$. We have done this many times so far, so we omit the details.
\end{proof}

\section{Acknowledgments}

This project began during ``Geometry of Scalar Curvature,'' a conference and summer school held in July 2019 in Italy and funded by the Deutsche Forschungsgemeinschaft (SPP 2026 ``Geometry at Infinity''). We are grateful to the co-organizers. We also acknowledge Bernd Ammann, Renato Bettiol, Alessandro Carlotto, Otis Chodosh, Mattias Dahl, Demetre Kazaras, and Or Hershkovits for helpful discussions, and the anonymous referees for valuable comments. C.L. and C.M. were both supported by the NSF (DMS-2005287/2202343 and DMS-1905165/2050120/2147521). The authors have no financial or proprietary interests in any material discussed in this article.

\appendix

\section{Some curvature formulas} \label{app:curvature.formulas}

For the reader's convenience, we collect some well-known curvature formulas from Riemannian geometry that play a crucial role for us.

First, suppose that $(M^n, g)$ is a Riemannian manifold with $n = \dim M \geq 3$. If $u > 0$ is smooth on $M$, then the conformal metric $\overline{g} = u^{4/(n-2)} g$ has (see \cite[(1.2)]{Escobar})
\begin{equation} \label{eq:scalar.curvature.conformal}
	R_{\overline{g}} = u^{-\frac{n+2}{n-2}} \left( - \tfrac{4(n-1)}{n-2} \Delta_g u + R_g u \right).
\end{equation}
Additionally, if $\Sigma \subset M$ is a two-sided hypersurface with unit normal $\nu$, then the mean curvature scalar of $H_\Sigma$ with $\nu$ as an outward pointing normal (so, $H_\Sigma = -\Div_\Sigma \nu$) transforms as (see \cite[(1.4)]{Escobar}, where mean curvatures are normalized with an $(n-1)^{-1}$ factor):
\begin{equation} \label{eq:mean.curvature.conformal}
	\overline{H}_\Sigma = u^{-\frac{n+2}{n-2}} \left( \tfrac{2(n-1)}{n-2} \langle \nabla_g u, \nu \rangle_g + H_\Sigma u \right).
\end{equation}

Next, the following slicing formulas for the curvature along normal foliations form the foundation for the deep relationship between $\sM^{>0}_{1/2}(M)$, scalar curvature, and stable minimal hypersurfaces. They are well-known to experts and follow from elementary computations.

\begin{lemm}[Slicing formulas for curvature] \label{lemm:slicing.formula.curv}
	Suppose that $\bm{M}$ is a smooth manifold, $I \subset \RR$ is an interval, $(g_t)_{t \in I}$ is a smooth path of metrics on $M$, and $(u_t)$ is a smooth path of positive $C^\infty(M)$ functions. On $\bm{N} := M \times I$, consider the metric:
	\[ \bm{h}(x,t) := g_t(x) + u_t(x)^2 dt^2. \]
	Then, for every $x \in M$, $t \in I$,
	\begin{align*}
		\sff_t(x) & = (2u_t(x))^{-1} \tfrac{d}{dt} g_t(x), \\
		H_t(x) & = (2u_t(x))^{-1} \operatorname{Tr}_{g_t(x)} \tfrac{d}{dt} g_t(x), \\
		R_{\bm{h}}(x,t) & = 2 u_t(x)^{-1} ( -\Delta_{g_t} u_t(x) + \tfrac12 R_{g_t}(x) u_t(x)) \\
			& \qquad - 2 u_t(x)^{-1} \tfrac{\partial}{\partial t} H_t(x) - H_t(x)^2 - |\sff_t(x)|^2,
	\end{align*}
	where $\sff_t(\cdot)$ and $H_t(\cdot)$ are the second fundamental form and mean curvature scalar of $M \times \{t\} \subset (\bm{N}, \bm{h})$ with $\partial_t$ taken to be the outward pointing direction.
\end{lemm}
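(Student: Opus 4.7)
The strategy is to compute in ``ADM-like'' form: the coordinate $\partial_t$ is orthogonal to each slice $M_t = M \times \{t\}$, and the slice has unit normal
\[
\nu = u(\cdot, t)^{-1} \partial_t.
\]
First I would derive the second fundamental form. Given $X, Y$ vector fields on $M$ (extended trivially in $t$), the Lie bracket $[\nu, X] = -X(u^{-1}) \partial_t$ is proportional to $\partial_t$, hence $\bm{h}$-orthogonal to $M_t$. Using the identity $\sff_t(X,Y) = \tfrac12 (\mathcal{L}_\nu \bm{h})(X,Y)$ for tangent $X,Y$, the bracket terms drop out and one gets
\[
\sff_t(X,Y) = \tfrac12 \nu(\bm{h}(X,Y)) = (2u)^{-1} (\partial_t g_t)(X,Y),
\]
which is the first claimed formula. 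Tracing against $g_t$ immediately yields the formula for $H_t$.

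For the scalar curvature, the plan is to apply the twice-contracted Gauss equation
\[
R_{\bm{h}} = R_{g_t} + |\sff_t|^2 - H_t^2 + 2\,\Ric_{\bm{h}}(\nu, \nu)
\]
and then rewrite $\Ric_{\bm{h}}(\nu,\nu)$ using the first variation of the mean curvature along the foliation. Since $\partial_t = u \nu$, the classical Raychaudhuri/Riccati formula for a hypersurface moving with normal speed $u$ reads
\[
\partial_t H_t = -\Delta_{g_t} u - \bigl(|\sff_t|^2 + \Ric_{\bm{h}}(\nu,\nu)\bigr) u,
\]
so that
\[
\Ric_{\bm{h}}(\nu,\nu) = -u^{-1} \partial_t H_t - u^{-1} \Delta_{g_t} u - |\sff_t|^2.
\]
Substituting this into the Gauss identity and simplifying gives
\[
R_{\bm{h}} = R_{g_t} - 2u^{-1} \Delta_{g_t} u - 2 u^{-1} \partial_t H_t - H_t^2 - |\sff_t|^2,
\]
which rearranges to the stated formula.

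The only genuinely nontrivial input is the mean-curvature evolution identity, which could alternatively be verified by a direct Christoffel-symbol computation in the product chart (with $\bm{h}_{00} = u^2$, $\bm{h}_{0i} = 0$, $\bm{h}_{ij} = (g_t)_{ij}$) of the component $\Ric_{\bm{h}}(\partial_t, \partial_t)$; this is the only step where signs and normalizations have to be tracked carefully, but it is routine. Everything else follows from standard conventions for $\sff$ and its trace, so no further subtleties arise.
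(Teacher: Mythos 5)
Your argument is correct and follows the same route the paper itself takes: both obtain $\sff_t$ and $H_t$ from the first-variation (Lie-derivative) interpretation of the second fundamental form and mean curvature, and both derive the scalar curvature formula by substituting the Riccati/first-variation-of-mean-curvature identity into the twice-contracted Gauss equation and rearranging. The only cosmetic difference is that you make the Lie-derivative step and the vanishing of the bracket terms explicit, whereas the paper simply cites ``the first variation formula.''
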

\begin{proof}
	The formulas for $\sff_t$, $H_t$ follow from the first variation formula, i.e., the interpretation of the second fundamental form and the mean curvature scalar as the rate of change of the metric and the volume form in the outward unit normal direction, $u_t^{-1} \partial_t$. The formula for the scalar curvature follows from the second variation formula
	\[ \tfrac{\partial}{\partial t} H_t = -\Delta_{g_t} u_t -(|\sff_t|^2 + \Ric_{\bm{h}}(u_t^{-1} \partial_t, u_t^{-1} \partial_t)|_{M \times \{t\}}) u_t \]
	and the twice-traced Gauss equation for $M \times \{t\} \subset (\bm{N}, \bm{h})$
	\[ R_{g_t} = R_{\bm{h}}|_{M \times \{t\}} - 2 \Ric_{\bm{h}}(u_t^{-1} \partial_t, u_t^{-1} \partial_t)|_{M \times \{t\}} + H_t^2 - |\sff_t|^2; \]
	multiply the former by $2u_t^{-1}$ and subtract the latter.
\end{proof}

Finally, we need the following formula for the mean curvature of graphs in case $u_t \equiv 1$ for all $t$, i.e., in case we are working in Fermi coordinates:

\begin{lemm}[Mean curvature in Fermi coordinates] \label{lemm:mean.curv.fermi}
	Assume the setting of Lemma \ref{lemm:slicing.formula.curv}, with $u_t \equiv 1$ for all $t$. Suppose that $\Sigma := \graph_M f \subset \bm{N}$ for a smooth $f : M \to I$. Then, the mean curvature scalar $H_\Sigma$ of $\Sigma$, with $\partial_t$ as the outward pointing unit normal, equals:
	\begin{align*}
		H_\Sigma(x) 
			& = - \Div_{g_{f(x)}} \left( \frac{\nabla_{g_{f(x)}} f(x)}{( 1 + g^{ij}_{f(x)} f_i(x) f_j(x) )^{1/2}} \right) \\
			& \qquad - \frac{\sff^{ij}_{f(x)}(x) f_i(x) f_j(x)}{( 1 + g^{ij}_{f(x)} f_i(x) f_j(x) )^{1/2}} + ( 1 + g^{ij}_{f(x)} f_i(x) f_j(x) )^{1/2} H_{f(x)}(x).
	\end{align*}
\end{lemm}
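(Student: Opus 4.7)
The plan is to deduce the formula from the first-variation-of-area identity, which is cleaner than a direct Christoffel-based computation because it avoids the subtle bookkeeping of $x$-versus-$t$ dependencies in Fermi coordinates.

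First, we parametrize $\Sigma$ by $\Phi(x) = (x, f(x))$, so that the tangent vectors are $X_i = \partial_i + f_i \partial_t$. A short computation yields the induced metric $\bar{g}_{ij} = g_{ij}(f(x)) + f_i f_j$ with $\sqrt{\det \bar{g}} = W \sqrt{\det g_{f(x)}}$, where $W := (1 + g^{ij}(f) f_i f_j)^{1/2}$, and identifies the unit normal pointing in the direction of increasing $t$ as $N = W^{-1}(\partial_t - (\nabla_{g_f} f)^k \partial_k)$.

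Next, we consider the graph area functional $A(f) = \int_M W \sqrt{\det g_{f(x)}}\, dx$ and vary $f \mapsto f + s\phi$ with $\phi$ compactly supported. Using $\partial_t g_{ij} = 2\sff_{t,ij}$ (which is Lemma~\ref{lemm:slicing.formula.curv} at $u \equiv 1$), and hence also $\partial_t g^{ij} = -2\sff_t^{ij}$ and $\partial_t \log\sqrt{\det g_t} = H_t$, differentiation at $s=0$ produces
\[
\frac{dA}{ds}\bigg|_{s=0} = \int_M \Bigl\{ W^{-1}\langle \nabla f, \nabla \phi\rangle_{g_f} - W^{-1}\sff_f(\nabla f, \nabla f)\phi + W\, H_f\, \phi \Bigr\} \sqrt{\det g_{f(x)}}\, dx.
\]
The $\sff_f(\nabla f,\nabla f)$ contribution arises from the $t$-derivative of $g^{ij}$ hitting $|\nabla f|^2_{g_f}$ inside $W$, while the $W H_f$ contribution arises from the $t$-derivative of the volume density $\sqrt{\det g_t}$.

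On the other hand, the first variation of area for the variation field $V = \phi\partial_t$ in $(\bm{N}, \bm{h})$ equals $\int_\Sigma H_\Sigma\, \bm{h}(V, N)\, dA_\Sigma$, with the paper's sign convention under which $H_t$ is positive for an expanding $t$-family (as verified on an expanding round sphere and consistent with Lemma~\ref{lemm:slicing.formula.curv}). Since $\bm{h}(V, N) = \phi/W$ and $dA_\Sigma = W\sqrt{\det g_f}\, dx$, this equals $\int_M H_\Sigma \phi \sqrt{\det g_f}\, dx$. Equating with the previous display, integrating the gradient term by parts, and invoking the arbitrariness of $\phi$ yields the stated pointwise identity, with $\Div_{g_{f(x)}}$ interpreted as the total-$x$ divergence $\tfrac{1}{\sqrt{\det g_{f(x)}}}\, \partial_i\bigl[\sqrt{\det g_{f(x)}}\, V^i\bigr]$ naturally dictated by the integration-by-parts step. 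The main obstacle is simply keeping the dual role of $f(x)$ --- as both graph height and slice-selector --- straight throughout, so that the $\partial_t g^{ij}$ and $\partial_t \log\sqrt{\det g_t}$ corrections appear in the right places and assemble into the two non-divergence terms of the statement.
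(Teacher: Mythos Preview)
Your argument is correct. The paper itself does not give an independent proof of this lemma; it simply cites \cite[(A.13)]{ChodoshMantoulidis:generic}. Your first-variation derivation is a clean way to obtain the formula from scratch: the key bookkeeping points---that $\partial_t g^{ij} = -2\sff_t^{ij}$ and $\partial_t\log\sqrt{\det g_t} = H_t$ under the sign conventions of Lemma~\ref{lemm:slicing.formula.curv}, and that the divergence appearing after integration by parts must be read as the total-$x$ divergence $\tfrac{1}{\sqrt{\det g_{f(x)}}}\partial_i\bigl[\sqrt{\det g_{f(x)}}\,V^i\bigr]$---are exactly the ones that need care, and you handle them correctly. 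So your write-up in fact supplies what the paper only references.
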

\begin{proof}
	See \cite[(A.13)]{ChodoshMantoulidis:generic}.
\end{proof}

\section{Round normal foliations} \label{app:psc.round.foliations}

Seeing as to how we are concerned with the flexibility of PSC cobordances with mean-convex foliations throughout this paper, we dedicate this appendix to such cobordances of the simplest kind, namely:
\begin{equation} \label{eq:psc.round.foliations}
(\SS^n \times I, f(t)^2 g_{\SS^n} + dt^2),
\end{equation}
where $I \subset \RR$ is an interval, $f : I \to (0, \infty)$ is a smooth function, and $g_{\SS^n}$ indicates a round metric on a unit $\SS^n$.  We call such metrics round normal foliations with unit speed. 

By Lemma \ref{lemm:slicing.formula.curv}, the second fundamental form and mean curvature of $\SS^n \times \{t\}$ (with the unit normal pointing to the right as the outward pointing normal) are:
\begin{equation} \label{eq:psc.round.foliations.sff.meancurv}
	\sff_t = \frac{f'(t)}{f(t)} g_{\SS^n} \text{ and } H_t = n \frac{f'(t)}{f(t)},
\end{equation}
and the ambient scalar curvature is
\begin{equation} \label{eq:psc.round.foliations.scal}
	R|_{\SS^n \times \{t\}} = \frac{n(n-1)(1 - (f'(t))^2) - 2n f''(t) f(t)}{f(t)^2}.
\end{equation}
Thus, our cobordance is:
\begin{equation} \label{eq:psc.round.foliations.psc}
	\text{PSC} \iff \frac{1-(f')^2}{f} > \frac{2}{n-1} f'',
\end{equation}
\begin{equation} \label{eq:psc.round.foliations.mean.convex}
	\text{mean-convex} \iff f' > 0.
\end{equation}
For the sake of the reader's intuition, we point out that exact solutions of the equality case of \eqref{eq:psc.round.foliations.psc} correspond to rotationally symmetric scalar-flat manifolds (i.e., Schwarzschild manifolds).

\begin{lemm}[Refined gluing lemma] \label{lemm:psc.round.foliations.gluing}
	Suppose that we have smooth functions $f_i : [a_i, b_i] \to (0, \infty)$, $i = 1$, $2$, satisfying \eqref{eq:psc.round.foliations.psc}, \eqref{eq:psc.round.foliations.mean.convex}. Assume, also, that $f_1(b_1) < f_2(a_2)$. 
	
	Then, the following two statements are equivalent:
	\begin{enumerate}
		\item There exists a $T > b_1 - a_2$ and a smooth function $F : [a_1, T+b_2] \to (0, \infty)$ satisfying \eqref{eq:psc.round.foliations.psc}, \eqref{eq:psc.round.foliations.mean.convex}, and
			\[ F = f_1 \text{ on } [a_1, b_1] \text{ and } F(T + \cdot) = f_2 \text{ on } [a_2, b_2]. \]
		\item The mean curvature of $\SS^n \times \{a_2\}$ in $(\SS^n \times [a_2, b_2], f_2^2 g_{\SS^n} + dt^2)$ is \emph{strictly less} than the mean curvature of the sphere of radius $f_2(a_2)$ in the unique Schwarzschild manifold containing $\SS^n \times \{b_1\}$ with its induced metric from $(\SS^n \times [a_2, b_2], f_1^2 g_{\SS^n} + dt^2)$.
	\end{enumerate}
\end{lemm}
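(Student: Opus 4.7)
The plan is to reformulate the ODE \eqref{eq:psc.round.foliations.psc} as a scalar monotonicity statement via a change of variables, and then to interpret (2) as precisely the condition that allows the required interpolation. Working in radial coordinates $r := f(t)$ (valid because mean-convexity gives $f' > 0$), set $\psi(r) := (f'(t))^2$ and introduce the key invariant
\[ C(r) := r^{n-1}\bigl( 1 - \psi(r) \bigr). \]
A direct computation from \eqref{eq:psc.round.foliations.psc} shows that strict PSC is equivalent to $C'(r) > 0$, while the scalar-flat (Schwarzschild) case corresponds to $C \equiv \mathrm{const}$. In particular, with $r_i$ and $C_i$ denoting the endpoint radii and invariants of $f_i$, the unique Schwarzschild whose sphere of radius $r_1 := f_1(b_1)$ reproduces the induced data of $f_1$ is the one with $C \equiv C_1(r_1)$. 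Because mean curvatures are $H = n\sqrt{\psi}/r$ and both quantities in (2) are positive, the scalar reformulation of (2) is precisely
\[ C_1(r_1) < C_2(r_2), \qquad r_2 := f_2(a_2). \]

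For (1) $\Rightarrow$ (2): if $F$ is a global glue, its associated invariant is strictly increasing on $[r_1, r_2]$ and matches $C_1$ at $r_1$ and $C_2$ at $r_2$ by the gluing, hence $C_1(r_1) < C_2(r_2)$.

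For (2) $\Rightarrow$ (1), the plan is to build a smooth strictly increasing $C : [r_1, r_2] \to \RR$ matching the $\infty$-jets of (smooth extensions of) $C_1, C_2$ at the respective endpoints, with $C(r) < r^{n-1}$ throughout, and then recover $F$ by integrating $F'(t) = \sqrt{1 - C(F)/F^{n-1}}$ from $F(b_1) = r_1$. The matching of $\infty$-jets and smoothness of the ODE right-hand side guarantee that $F$ glues $C^\infty$-smoothly with $f_1$ at $b_1$ and with $f_2$ at the other gluing time. First extend $C_1, C_2$ slightly past $r_1, r_2$ via their own PSC phase-plane ODEs (strict PSC at $b_1, a_2$ keeps these extensions strictly increasing with $\psi > 0$). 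Then pick bump functions $\chi_\pm$ supported near $r_1, r_2$, equal to $1$ in smaller neighborhoods of the respective endpoints and with small disjoint supports, and set
\[ C(r) := C_1(r_1) + \int_{r_1}^r \bigl( \chi_-(s) C_1'(s) + \chi_+(s) C_2'(s) + (1 - \chi_-(s) - \chi_+(s)) g_0(s) \bigr)\, ds, \]
with $g_0 > 0$ smooth, supported in the interior of $[r_1, r_2]$. Positivity of the integrand forces $C' > 0$; the construction gives $C \equiv C_1$ near $r_1$ and $C \equiv C_2$ near $r_2$ provided one calibrates $g_0$ so that the total integral equals $C_2(r_2) - C_1(r_1)$, which is possible by the strict inequality in (2).

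The main technical point is ensuring $\psi > 0$ throughout, equivalently $C(r) < r^{n-1}$ on $[r_1, r_2]$. This is nontrivial only when $r_1 \ll r_2$ and $\psi_2$ is small (so $C_2(r_2)$ is close to $r_2^{n-1}$), in which case a uniform distribution of $g_0$ across $[r_1, r_2]$ would force $C$ above $r^{n-1}$ somewhere in the middle. The remedy is to concentrate $g_0$'s support close to $r_2$ so that $C$ stays near $C_1(r_1) < r_1^{n-1}$ over most of the interval and only ramps up to $C_2(r_2) < r_2^{n-1}$ near $r_2$, exploiting the openness of $C < r^{n-1}$ at both endpoints to close the argument.
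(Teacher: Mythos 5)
Your proof is correct and takes essentially the same phase-plane route as the paper, with a cosmetic difference in the choice of dependent variable. The paper changes variables to $x := f(t)$, $y(x) := f'(t)$, observes that strict PSC is a first-order differential inequality on $y(x)$ whose equality orbits are the Schwarzschild profiles, reformulates (2) as $y_2(r_2) < \mathbf{y}_1(r_2)$, and declares the resulting interpolation ``trivial.'' You instead work with $C(r) := r^{n-1}(1-\psi(r))$ with $\psi := (f')^2$ --- essentially a (rescaled) Hawking mass --- which reduces strict PSC to the pure monotonicity statement $C' > 0$, the Schwarzschild orbits to $C \equiv \text{const}$, and (2) to $C_1(r_1) < C_2(r_2)$. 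This is a slightly cleaner bookkeeping of the same idea. You also make explicit what the paper suppresses under ``trivial'': the interpolating $C$ must stay strictly below $r^{n-1}$ throughout (equivalently $y>0$, which the paper tacitly enforces by requiring its Schwarzschild solutions avoid $x\to 0$), and your remedy --- keep $C$ near $C_1(r_1)<r_1^{n-1}\le r^{n-1}$ on most of $[r_1,r_2]$ and ramp up only in a small left-neighborhood of $r_2$ where the extension of $C_2$ already satisfies the constraint --- is correct and genuinely needed for a complete argument.
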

\begin{proof}
	Since we're only interested in mean-convex foliations, i.e., solutions satisfying \eqref{eq:psc.round.foliations.mean.convex}, we are allowed to change variables and treat $f(t)$ as an independent variable $x \in (0, \infty)$, and $f'(t)$ as a dependent variable $y(x) \in (0, \infty)$. This change of variables eliminates the need to determine $T$. Moreover, the chain rule gives:
	\begin{equation} \label{eq:psc.round.foliations.psc.xy}
		\text{PSC} \iff \frac{1-y(x)^2}{xy(x)} > \frac{2}{n-1} y'(x).
	\end{equation}
	This is a first order nonlinear differential inequality. Elementary ODE analysis shows that, for all $(x_0, y_0) \in (0, \infty) \times (0, \infty)$, solutions $\mathbf{y}(x)$ of the corresponding \emph{equation} with $\mathbf{y}(x_0) = y_0$ exist for all $x \in [x_0, \infty)$ forward in time, and as long as $x \not \to 0$, $\infty$ backward in time, and are unique. We refer to such $\mathbf{y}(x)$ as
	\[ \textit{Schwarzschild solutions through } (x_0, y_0). \]
	Let us rewrite (1), (2) above in this formulation. Let $y_i : [f_i(a_i), f_i(b_i)] \to (0, \infty)$, $i=1$, $2$, denote the reparametrizations of our $f_i$. Then, (1) turns into:
	\begin{enumerate}
		\item [(1')] There exists a smooth extension $Y : [f_1(a_1), f_2(b_2)] \to (0, \infty)$ of $y_1$ and $y_2$ that satisfies \eqref{eq:psc.round.foliations.psc.xy}.
	\end{enumerate}
	For (2), we note:
	\begin{itemize}
		\item The mean curvature of $\SS^n \times \{a_2\}$ in $(\SS^n \times [a_2, b_2], f_2^2 g_{\SS^n} + dt^2)$ is $n y_2(f_2(a_2)) / f_2(a_2)$ by \eqref{eq:psc.round.foliations.sff.meancurv}.
		\item If $\mathbf{y}_1 : [f_1(b_1), \infty) \to (0, \infty)$ denotes the unique Schwarzschild solution through $(f_1(b_1), f_1'(b_1))$, then the mean curvature of the sphere of radius $f_2(a_2)$ in this Schwarzschild solution is $n \mathbf{y}_1(f_2(a_2)) / f_2(a_2)$ by \eqref{eq:psc.round.foliations.sff.meancurv}.
	\end{itemize}
	Therefore, (2) turns into:
	\begin{enumerate}
		\item [(2')] $y_2(f_2(a_2)) < \mathbf{y}_1(f_2(a_2))$.
	\end{enumerate}
	The equivalence of (1') and (2') is now trivial. Indeed, the existence of $Y$ in (1') comes down to getting a smooth extension that always flows monotonically across the Schwarzschild solution flowlines. 
\end{proof}

\begin{coro}[Bending and gluing Schwarzschild] \label{coro:psc.round.foliations.bent.schwarzschild}
	Suppose that $m_1 < m_2$ and $(\bm{M}_{i}, \bm{g}_{i})$, $i = 1$, $2$, are mass $m_i$ exterior Schwarzschild manifolds. Suppose that $\Sigma_i$, $i = 1$, $2$ are two round spheres (along the standard foliation) of radius $\rho_i > 0$ and mean curvature $h_i > 0$ with respect to the unit normal pointing to infinity. If $\rho_1 < \rho_2$ and $h_2$ is strictly less than the mean curvature of the mean-convex sphere of radius $\rho_2$ that can be found inside $(\bm{M}_{1}, \bm{g}_{1})$, then there exists a smooth Riemannian manifold $(\widetilde{\mathbf{M}}, \widetilde{\mathbf{g}})$ with the following properties:
	\begin{enumerate}
		\item $(\widetilde{\mathbf{M}}, \widetilde{\mathbf{g}})$ contains an isometric copy of the mean-convex region bounded by $\Sigma_1$ and the mean-concave region bounded by $\Sigma_2$;
		\item the interior region spanning the two regions above is of the form \eqref{eq:psc.round.foliations} and also satisfies \eqref{eq:psc.round.foliations.psc}, \eqref{eq:psc.round.foliations.mean.convex}.
	\end{enumerate}
	Additionally, recalling the inherent dependence $h_i = h_i(m_i, \rho_i)$, we can also have:
	\begin{enumerate}
		\item [(3)] If $\rho_1$, $\rho_2$ are held fixed and $m_1$, $m_2 \to m_\star$, then $(\widetilde{\mathbf{M}}, \widetilde{\mathbf{g}})$ converges in $C^\infty$ to the mass $m_\star$ exterior Schwarzschild manifold.
	\end{enumerate}
\end{coro}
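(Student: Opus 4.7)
The plan is to reduce the construction directly to Lemma \ref{lemm:psc.round.foliations.gluing} applied to short collars of $\Sigma_1$, $\Sigma_2$ inside their respective Schwarzschild manifolds. First, I would write each exterior Schwarzschild $(\bm{M}_i, \bm{g}_i)$ in the canonical warped-product form $f_i(t)^2 g_{\SS^n} + dt^2$, with $f_i$ the Schwarzschild warping function determined by $m_i$, parametrized so that $\Sigma_1 = \SS^n \times \{b_1\}$ with $f_1(b_1) = \rho_1$ and $\Sigma_2 = \SS^n \times \{a_2\}$ with $f_2(a_2) = \rho_2$, picking collars $f_1 : [a_1, b_1] \to (0, \infty)$ and $f_2 : [a_2, b_2] \to (0, \infty)$. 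Both warping functions saturate the equality in \eqref{eq:psc.round.foliations.psc} (Schwarzschild is scalar-flat) and satisfy \eqref{eq:psc.round.foliations.mean.convex} since each $\Sigma_i$ is mean-convex with unit normal pointing to infinity.

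Next, the mean-curvature hypothesis translates verbatim into condition (2) of Lemma \ref{lemm:psc.round.foliations.gluing}: by uniqueness of Schwarzschild continuations, the unique Schwarzschild containing $\Sigma_1$ with its induced metric from the left collar is $(\bm{M}_1, \bm{g}_1)$ itself, so the ``mean curvature of the sphere of radius $f_2(a_2)$'' appearing in the lemma is exactly the mean curvature of the radius-$\rho_2$ sphere in $(\bm{M}_1, \bm{g}_1)$, which strictly exceeds $h_2$. Lemma \ref{lemm:psc.round.foliations.gluing} thus produces a smooth $F : [a_1, T+b_2] \to (0, \infty)$ satisfying \eqref{eq:psc.round.foliations.psc}, \eqref{eq:psc.round.foliations.mean.convex} that extends $f_1$ and a translate of $f_2$. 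Concatenating the mean-convex region of $(\bm{M}_1, \bm{g}_1)$ bounded by $\Sigma_1$, the warped cylinder built from $F$, and the mean-concave region of $(\bm{M}_2, \bm{g}_2)$ bounded by $\Sigma_2$ yields $(\widetilde{\bm{M}}, \widetilde{\bm{g}})$; smoothness across the junctions is automatic because $F$ already coincides with the Schwarzschild warping functions on the collars. This settles (1) and (2).

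For (3), I would track the parameter-dependence through the construction. When $\rho_1, \rho_2$ are held fixed and $m_1, m_2 \to m_\star$, the warping functions $f_1, f_2$ converge in $C^\infty_{\loc}$ to the mass-$m_\star$ Schwarzschild warping function $f_\star$, and in the $(x,y)$-plane of the Lemma's ODE reformulation the four boundary data $(\rho_i, f_i'(\cdot))$ converge to two points lying on the same Schwarzschild flowline, namely the one corresponding to $m_\star$. The main obstacle is arranging for $F$ to depend smoothly on this boundary data and, in particular, to reduce to $f_\star$ in the coalescence limit. I would address this by constructing the function $Y$ in Lemma \ref{lemm:psc.round.foliations.gluing} canonically: fix a smooth transversal to the (smooth) Schwarzschild foliation of $(0, \infty) \times (0, \infty)$, and define $Y$ by prescribing its transversal coordinate as a fixed smooth monotone interpolation between the two prescribed flowlines. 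When both boundary data lie on the same flowline, the interpolation collapses to that single flowline, so $F \to f_\star$ in $C^\infty_{\loc}$ and $(\widetilde{\bm{M}}, \widetilde{\bm{g}})$ converges in $C^\infty$ to the mass-$m_\star$ exterior Schwarzschild manifold, which gives (3).
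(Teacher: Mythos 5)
Your plan to reduce the construction to Lemma~\ref{lemm:psc.round.foliations.gluing} is on the right track, but there is a genuine gap in Step 1: Lemma~\ref{lemm:psc.round.foliations.gluing} requires the input collars $f_1$, $f_2$ to satisfy the \emph{strict} inequality \eqref{eq:psc.round.foliations.psc}, whereas the Schwarzschild warping functions saturate the equality (they are scalar-flat), which you note yourself. Worse, the output $F$ of the lemma must coincide with $f_1$ on all of $[a_1,b_1]$ and with a translate of $f_2$ on $[a_2,b_2]$ while \emph{also} satisfying \eqref{eq:psc.round.foliations.psc} on its whole domain; this is impossible if $f_1$, $f_2$ are scalar-flat, so one cannot feed raw Schwarzschild collars into the lemma at all. (In fact, if one nonetheless glued with scalar-flat collars, conclusion (2) would fail because the ``interior region'' between $\Sigma_1$ and $\Sigma_2$ would contain $R\equiv 0$ annuli.)

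The missing ingredient, which the paper uses, is a small initial PSC bend: one first deforms the Schwarzschild collar just past $\Sigma_1$ and just before $\Sigma_2$ (the higher-dimensional version of \cite[Lemma 2.3]{MantoulidisSchoen:bartnik}, now using \eqref{eq:psc.round.foliations.psc}) so that the bent collars satisfy the strict inequality \eqref{eq:psc.round.foliations.psc} while matching Schwarzschild to infinite order at $\Sigma_1$, $\Sigma_2$. Because the hypotheses on $\rho_1,\rho_2,h_1,h_2$ are strict, they survive this small bend, and \emph{then} Lemma~\ref{lemm:psc.round.foliations.gluing} applies to the bent collars. Once that step is restored, your argument for (1), (2) goes through, and your reasoning for (3) --- reducing to the ODE formulation in the $(\varrho, y)$-plane and invoking convergence of boundary data plus smooth dependence of ODE solutions on data --- is essentially the argument the paper gives; the paper does not bother constructing a canonical interpolation as you propose, but simply tracks that the initial and terminal Schwarzschild flowline data converge and appeals to smooth dependence.
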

\begin{proof}
	One can apply a small initial PSC bend to the region just past $\Sigma_1 \subset (\bm{M}_1, \bm{g}_1)$ and the region just prior to $\Sigma_2 \subset (\bm{M}_2, \bm{g}_2)$ (cf. \cite[Lemma 2.3]{MantoulidisSchoen:bartnik} for the 2-dimensional argument, which generalizes trivially now using  \eqref{eq:psc.round.foliations.psc} instead of the 2-dimensional formula). Since our inequalities on $\rho_1$, $\rho_2$,$h_1$, $h_2$ were strict, they are preserved for the boundaries of the slightly bent regions, which are then glued together using Lemma \ref{lemm:psc.round.foliations.gluing}. Conclusions (1) and (2) follow immediately from this construction. To see conclusion (3), it is perhaps fastest to revisit the proof of Lemma \ref{lemm:psc.round.foliations.gluing}. When $m_1, m_2 \to m_\star$, our reduction shows
	\[ |y_2(\rho_1) - \mathbf{y}_\star(\rho_1)| + |\mathbf{y}_1(\rho_1) - \mathbf{y}_\star(\rho_1)| \to 0 \]
	where $y_2$, $\mathbf{y}_1$ is as above, $\rho_1 =: f_2(a_2)$, and $\mathbf{y}_\star$ denotes the orbit line of the mass $m_\star$ Schwarzschild solution through $(\rho_2, h_\star)$, where $h_\star > 0$ is the mean curvature of our radius-$\rho_2$ sphere inside $(\bm{M}_\star, \bm{g}_\star)$. Thus, we have convergence of the initial and terminal data of the Schwarzschild solution, and the result follows by the smooth dependence of ODE solutions on their data.
\end{proof}

In the next lemma we deal with metrics of the form \eqref{eq:psc.round.foliations} that have strictly positive lower bounds on the scalar curvature. 

\begin{lemm}[cf. {\cite[Proposition 3.3]{Marques:deforming.psc}}, {\cite[Lemma 6.2]{Marques:deforming.psc}}] \label{lemm:psc.round.foliations.isotopy}
	Suppose that $\bm{g}_0 := f_0(t)^2 g_{\SS^n} + dt^2 \in \operatorname{Met}(\SS^n \times [a,b])$ has $R_{\bm{g}_0} \geq \theta n(n-1)$, where the metric $g$ is a unit round metric on $\SS^n$, $n \geq 2$, and $\theta \in [0,1]$. There exists a continuous extension of $\bm{g}_0$ to $[0,1] \ni \mu \mapsto \bm{g}_\mu \in \operatorname{Met}(\SS^n \times [a,b])$, where each $\bm{g}_\mu$ is of the form $f_\mu(t)^2 g_{\SS^n} + h_\mu(t)^2 dt^2$, and:
	\begin{enumerate}
		\item $\bm{g}_1 = g_{\SS^n} + dt^2$;
		\item $R_{\bm{g}_\mu} > \theta n(n-1)$ on $\SS^n \times [a,b]$ for all $\mu \in (0,1)$;
		\item if $f_0(t_0) = 1$, then $f_\mu(t_0) = h_\mu(t_0) = 1$ for all $\mu \in [0,1]$.
	\end{enumerate}
\end{lemm}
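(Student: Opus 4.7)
The plan is to define $\bm{g}_\mu = f_\mu(t)^2 g_{\SS^n} + dt^2$ (keeping $h_\mu \equiv 1$ throughout) by linearly interpolating the squared profile,
\[ f_\mu(t)^2 := (1-\mu) f_0(t)^2 + \mu. \]
Conditions (1) and (3) are then immediate: $f_1 \equiv 1$ by inspection, and if $f_0(t_0) = 1$ then $f_\mu(t_0)^2 = (1-\mu) + \mu = 1$, while $h_\mu \equiv 1$.

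The main content is verifying (2), i.e.\ $R_{\bm{g}_\mu} > \theta n(n-1)$ for $\mu \in (0,1)$. A direct computation using Lemma \ref{lemm:slicing.formula.curv} yields
\[ R_{\bm{g}_\mu} = \frac{n(n-1) - 2n(1-\mu)\bigl[(f_0')^2 + f_0 f_0''\bigr]}{F} + \frac{n(3-n)(1-\mu)^2 f_0^2 (f_0')^2}{F^2}, \]
where $F := (1-\mu) f_0^2 + \mu$. The initial bound $R_{\bm{g}_0} \geq \theta n(n-1)$ rewrites as $-2 f_0 f_0'' \geq (n-1)\bigl((f_0')^2 + \theta f_0^2 - 1\bigr)$. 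Substituting this into the $-2n(1-\mu) f_0 f_0''$ term in the first numerator, combining the two fractions using the identity $F - (1-\mu) f_0^2 = \mu$, and simplifying algebraically, I expect a collapse to
\[ R_{\bm{g}_\mu} - \theta n(n-1) \geq \frac{n\mu}{F}\left[(n-1)(1-\theta) + \frac{(n-3)(1-\mu)(f_0')^2}{F}\right]. \]
For $n \geq 3$ and $\theta < 1$, both bracket terms are nonnegative and the first is strictly positive for $\mu > 0$, so (2) holds.

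Two residual cases require additional care. For $\theta = 1$, the gap $(1-\theta)$ vanishes and strict positivity is lost; the plan is to perturb to
\[ f_\mu(t)^2 := (1-\mu) f_0(t)^2 + \mu + \varepsilon \mu(1-\mu) \xi(t), \]
with smooth $\xi \geq 0$ vanishing on $\{t : f_0(t) = 1\}$ (to preserve (3)) and small $\varepsilon > 0$; this inserts a strict $O(\varepsilon)$ margin into $R_{\bm{g}_\mu} - \theta n(n-1)$ for $\mu$ bounded away from $\{0,1\}$. For $n = 2$ the coefficient $(n-3) = -1$ makes the bracket potentially negative, so one must separately bound $(f_0')^2/F$ using the initial PSC estimate $(f_0')^2 \leq 1 - \theta f_0^2 - 2 f_0 f_0''$; this bound is clean when $f_0 f_0'' \geq 0$. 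The main obstacle I anticipate is completing the $n=2$ estimate in the subregime where $f_0 < 1$ and $f_0'' < 0$ simultaneously, where the direct bound on $(f_0')^2$ is too weak to beat the negative bracket contribution; I expect to handle this by the same $\varepsilon$-perturbation trick as for $\theta = 1$, choosing $\xi$ and $\varepsilon$ uniformly to absorb this borderline error term.
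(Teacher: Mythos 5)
Your algebra in the middle of the argument is correct: with $F := (1-\mu) f_0^2 + \mu$ one does indeed get
\[ R_{\bm{g}_\mu} - \theta n(n-1) \geq \frac{n\mu}{F}\left[(n-1)(1-\theta) + \frac{(n-3)(1-\mu)(f_0')^2}{F}\right], \]
and for $n \geq 3$, $\theta < 1$ this gives property (2). But the residual cases you flag are not residual, and the $\varepsilon$-perturbation you propose cannot close them. The $n=2$ failure is macroscopic, not borderline. Take $n=2$, $\theta=0$, $f_0(t)=t$ on an interval $[\delta,1]$ with $\delta$ small; then $R_{\bm{g}_0}\equiv 0$, and a direct computation with your ansatz $f_\mu^2 = (1-\mu)t^2+\mu$, $h_\mu\equiv 1$ gives
\[ R_{\bm{g}_\mu} = \frac{2\mu\big[(1-\mu)(t^2-2)+1\big]}{F^2}, \]
which is \emph{negative} for all $\mu \leq \tfrac12$ near $t=\delta$ (e.g.\ at $\mu=0.1$, $t=\delta$ small, the bracket is $\approx -0.8$). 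This is an $O(1)$ deficit, so adding $\varepsilon\mu(1-\mu)\xi(t)$ to $f_\mu^2$ cannot repair it: that perturbation changes the scalar curvature only by $O(\varepsilon)$. The real obstruction is your rigidity constraint $h_\mu\equiv 1$: the conclusion of the lemma explicitly allows $h_\mu \neq 1$, and this freedom is essential.

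The paper's proof uses a genuinely different two-step decomposition that exploits exactly this extra degree of freedom. The key structural observation is that $f_0^{-2}\bm{g}_0 = g_{\SS^n} + f_0(t)^{-2}dt^2$ is already a standard round cylinder up to a reparametrization of $t$. Accordingly, for $\mu\in[0,\tfrac12]$ the paper conformally interpolates $\bm{g}_\mu := \big[(1-2\mu) + 2\mu f_0(t)^{(1-n)/2}\big]^{4/(n-1)}\bm{g}_0$, arriving at $\bm{g}_{1/2} = f_0^{-2}\bm{g}_0$; the lower bound $R_{\bm{g}_\mu}\geq\theta n(n-1)$ is then read off from a concavity-in-$\mu$ argument (using $f_0\leq 1$) for the conformal Laplacian expression. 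Then for $\mu\in(\tfrac12,1]$ one linearly interpolates the warping factor $h_\mu^2 = (2-2\mu)f_0^{-2}+(2\mu-1)$ while keeping $f_\mu\equiv 1$; any metric $g_{\SS^n}+h(t)^2 dt^2$ has $R\equiv n(n-1)$, so (2) holds trivially. Notice that in Step 1 the metric is of the form $f_\mu^2 g_{\SS^n} + h_\mu^2 dt^2$ with $f_\mu = u_\mu^{2/(n-1)} f_0$ and $h_\mu = u_\mu^{2/(n-1)}\neq 1$, precisely the flexibility you gave up. Your interpolation moves the sphere radius at fixed interval length; the paper's interpolation simultaneously rescales the interval, which is what keeps the scalar curvature under control in low dimensions.
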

\begin{proof}
	A key observation is that the conformal metric
	\begin{equation} \label{eq:psc.round.foliation.isotopy.obs}
		f_0(t)^{-2} \bm{g}_0 := g_{\SS^n} + f_0(t)^{-2} dt^2 \text{ is a standard product metric}
	\end{equation}
	modulo a reparametrization of the $t$ coordinate.
	
	\textbf{Step 1} ($\mu \in [0,\tfrac12]$). Define
	\[ \bm{g}_\mu := \big[ (1-2\mu) + 2\mu f_0(t)^{(1-n)/2} \big]^{4/(n-1)} \bm{g}_0, \; \mu \in [0,\tfrac12]. \]
	Conclusion (3) is trivial. By \eqref{eq:scalar.curvature.conformal}, $R_{\bm{g}_\mu} \geq \theta n(n-1)$ is equivalent to
	\begin{multline*}
		-\tfrac{4n}{n-1} \Delta_{\bm{g}_0} \big[ 2\mu f_0^{(1-n)/2} \big] + R_{\bm{g}_0} \big[ (1-2\mu) + 2\mu f_0^{(1-n)/2} \big] \\
		- \theta n(n-1) \big[ (1-2\mu) + 2\mu f_0^{(1-n)/2} \big]^{(n+3)/(n-1)} \geq 0.
	\end{multline*}
	This inequality is an identity at $\mu = 0$ by assumption, and is also true at $\mu = \tfrac12$ by \eqref{eq:psc.round.foliation.isotopy.obs} and $\theta \in [0,1]$. For $\mu \in (0, \tfrac12)$, we observe that the expression above is a concave function of $\mu \in [0,\tfrac12]$ since $f_0 \leq 1$. This implies conclusion (2) by calculus.
	
	\textbf{Step 2} ($\mu \in (\tfrac12, 1]$). Define
	\[ \bm{g}_\mu := g_{\SS^n} + \big[ (2-2\mu) f_0(t)^{-2} + 2 \mu - 1 \big] dt^2, \; \mu \in (\tfrac12, 1]. \]
	By construction, $\bm{g}_\mu$ is continuous at $\mu = \tfrac12$, and as in \eqref{eq:psc.round.foliation.isotopy.obs}, $\bm{g}_\mu$ is a  standard product metric modulo a reparametrizatoin of the $t$ coordinate. This readily implies conclusion (2), and conclusion (3) is trivial.
\end{proof}

\section{More facts and formulas regarding $\sM^{>0}_{k}$, $\sM^{\geq 0}_{k}$} \label{app:m.k.plus.more}

The following lemma is well-known to experts in the Yamabe problem, and relates $\cM^{>0}_k(M)$ ($\cM^{\geq 0}_k(M)$) to the question of existence of a metric of positive (nonnegative) scalar curvature on $M$.

\begin{lemm}[cf. {\cite[Lemma 1.2]{Schoen:variational.theory}}] \label{lemm:m.k.plus.equivalences}
	Properties (1) and (2) below are equivalent for a closed connected Riemannian $n$-manifold $(M, g)$, if $n \geq 3$.
	\begin{enumerate}
		\item $g \in \sM^{>0}_{(n-2)/4(n-1)}(M)$ ($\sM^{\geq 0}_{(n-2)/4(n-1)}(M)$);
		\item there exists a metric $\overline{g}$ conformal to $g$ with $R_{\overline{g}} > 0$ ($R_{\overline{g}} \geq 0$).
	\end{enumerate}
	If $n = 2$, then property (1') below implies (2'):
	\begin{enumerate}
		\item[(1')] $g \in \sM^{>0}_{k}(M)$ ($\sM^{\geq 0}_{k}(M)$) for any $k \in (0, \infty)$;
		\item[(2')] $M$ is diffeomorphic to $\SS^2$ or $\RR \PP^2$ (or, additionally, $\TT^2$, $\KK^2$).
	\end{enumerate}
\end{lemm}
\begin{proof}
	We only treat $\sM^{>0}_{k}(M)$; the case of $\sM^{\geq 0}_{k}(M)$ is essentially similar. First consider $n \geq 3$. For brevity, we write $k=\tfrac{n-2}{4(n-1)}$.  
	
	(1) $\implies$ (2). Let $u$ be a positive first eigenfunction of $-\Delta_g + k R_g$. Then $\overline{g} = u^{4/(n-2)} g$ has $R_{\overline{g}} > 0$ by \eqref{eq:scalar.curvature.conformal}. 

	(2) $\implies$ (1). Suppose that $\overline{g} \in [g]$ has $R_{\overline{g}} > 0$. Then, \[ \int_M (|\nabla_{\overline{g}} \tilde f|^2 + k R_{\overline{g}} \tilde f^2) \, d\mu_{\overline{g}} > 0 \text{ for all } \tilde f \in C^\infty(M) \setminus \{0\}  \]
	\[ \implies \int_M R_{\tilde g} \, d\mu_{\tilde g} > 0 \text{ for all } \tilde g \in [\overline{g}] \]
	by \eqref{eq:scalar.curvature.conformal}. On the other hand, $[g] = [\overline{g}]$, so by \eqref{eq:scalar.curvature.conformal} again,
	\[ \implies \int_M (|\nabla_g f|^2 + k R_g f^2) \, d\mu_g > 0 \text{ for all } f \in C^\infty(M) \setminus \{0\}. \]
	Thus, $g \in \sM^{>0}_{k}(M)$ by the variational characterization of the first eigenvalue.
	
	(1') $\implies$ (2'). By the variational characterization of the first eigenvalue,
	\[ \int_M (|\nabla_g f|^2 + k R_g f^2) \, d\mu_g > 0 \text{ for all } f \in C^\infty(M) \setminus \{0\}. \]
	Plugging in $f \equiv 1$, it follows from Gauss--Bonnet that $M$ is an $\SS^2$ or $\RR \PP^2$. 
\end{proof}

\begin{coro} \label{coro:m.k.plus.nonempty}
	Let $M$ be a closed $n$-manifold. The following are equivalent:
	\begin{enumerate}
		\item $\sM^{>0}_{k}(M)$ ($\sM^{\geq 0}_{k}(M)$) is nonempty for all $k \in [\tfrac{n-2}{4(n-1)}, \infty) \cap (0, \infty)$;
		\item $\sM^{>0}_{k}(M)$ ($\sM^{\geq 0}_{k}(M)$) is nonempty for some $k \in [\tfrac{n-2}{4(n-1)}, \infty) \cap (0, \infty)$;
		\item $\sM^{>0}_{\infty}(M)$ ($\sM^{\geq 0}_{\infty}(M)$) is nonempty, i.e., $M$ is topologically PSC (NNSC).
	\end{enumerate}
\end{coro}
\begin{proof}
	(1) $\implies$ (2) is trivial.

	(2) $\implies$ (3) follows by applying \eqref{eq:m.k.plus.inclusion} and Lemma \ref{lemm:m.k.plus.equivalences} to each component of $M$.
		
	(3) $\implies$ (1) also follows from \eqref{eq:m.k.plus.inclusion}.
\end{proof}

The following problem seems interesting:
\begin{prob}
	Suppose that $M$ is a closed $n$-manifold, and $k \in (0, \infty]$. Are $\sM^{>0}_{k}(M)$, $\sM^{>0}_{k} \big/ \operatorname{Diff}(M)$ connected when nonempty?
\end{prob}

Here is what is known about this problem:

\begin{itemize}
	\item When $n=2$, the answer is known to be yes for all $k \in (0, \infty]$. This follows from a crucial property of $\lambda_1(-\Delta + kR)$ that was observed in  \cite[Proposition 1]{MantoulidisSchoen:bartnik}. Since this reference only proves the result for $k=\tfrac12$, we show the general argument in Proposition \ref{prop:m.k.2.dim.connected} below.
	\item When $n=3$, Theorems \ref{theo:m.k.plus.path}, \ref{theo:m.k.plus.path.yamabe} shows that the answer is yes for all $k \in \{ \tfrac18 \} \cup [\tfrac14, \infty]$; when $k=\infty$, this is due respectively to  Bamler--Kleiner \cite{BamlerKleiner:contractibility} (who proved a stronger result) and Cod\'a Marques \cite{Marques:deforming.psc}. We do not know what happens when $k \in (0, \tfrac14) \setminus \{ \tfrac18 \}$.
\end{itemize}

\begin{prop} \label{prop:m.k.2.dim.connected}
	The following spaces are all smoothly path-connected:
	\begin{itemize}
		\item $\sM^{>0}_{k}(\SS^2)$;
		\item $\sM^{\geq 0}_{k}(\SS^2)$, and all path interiors can be taken in $\sM^{>0}_{k}(\SS^2)$;
	\end{itemize}
\end{prop}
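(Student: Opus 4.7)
The approach hinges on a single algebraic identity, a straightforward generalization of \cite[Proposition 1]{MantoulidisSchoen:bartnik} from $k = \tfrac14$ to arbitrary $k > 0$. For any two conformally related metrics $g = e^{2w} g_0$ on a closed surface, the 2D scalar curvature law $R_g = e^{-2w}(R_{g_0} - 2 \Delta_{g_0} w)$ combined with $d\mu_g = e^{2w} d\mu_{g_0}$ and $|\nabla_g \phi|^2 = e^{-2w}|\nabla_{g_0} \phi|^2$ gives
\begin{equation} \label{eq:proposal.affine}
\int_M (|\nabla_g \phi|^2 + k R_g \phi^2) \, d\mu_g = \int_M (|\nabla_{g_0} \phi|^2 + k R_{g_0} \phi^2 - 2k \Delta_{g_0} w \cdot \phi^2) \, d\mu_{g_0}
\end{equation}
for every $\phi \in C^\infty(M)$. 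The crucial feature is that the right-hand side is \emph{affine in $w$} for each fixed $\phi$.

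\textbf{The $\SS^2$ cases.} By uniformization, any $g$ can be written as $g = e^{2w} g_{\mathrm{rd}}$ with $g_{\mathrm{rd}}$ the round unit metric ($R_{g_{\mathrm{rd}}} = 2$). I would take the straight-line path $g_t := e^{2tw} g_{\mathrm{rd}}$, $t \in [0,1]$, joining $g_{\mathrm{rd}}$ to $g$. By \eqref{eq:proposal.affine} the Rayleigh numerator $Q_t(\phi)$ is affine in $t$, with $Q_0(\phi) \geq 2k \int \phi^2 \, d\mu_{g_{\mathrm{rd}}} > 0$ and $Q_1(\phi) \geq 0$ whenever $g \in \sM^{\geq 0}_k(\SS^2)$, so $Q_t(\phi) \geq 2k(1-t) \int \phi^2 \, d\mu_{g_{\mathrm{rd}}}$ on $[0,1]$. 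Bounding $\int \phi^2 \, d\mu_{g_t} \leq e^{2t \sup w} \int \phi^2 \, d\mu_{g_{\mathrm{rd}}}$ then gives strict positivity of $\lambda_1(-\Delta_{g_t} + k R_{g_t})$ on $[0,1)$. Hence $\sM^{>0}_k(\SS^2)$ is smoothly star-shaped about $g_{\mathrm{rd}}$, and given any two metrics in $\sM^{\geq 0}_k(\SS^2)$ one concatenates $g_L \rightsquigarrow g_{\mathrm{rd}} \rightsquigarrow g_R$ (with a standard reparametrization to smooth the join) to obtain a smooth path in $\sM^{\geq 0}_k(\SS^2)$ lying in $\sM^{>0}_k(\SS^2)$ away from its endpoints.

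\textbf{The $\TT^2$ case.} Here I first claim $\sM^{\geq 0}_k(\TT^2)$ consists solely of flat metrics. Writing $g = e^{2w} g_0$ with $g_0$ flat (so $R_{g_0} = 0$) and applying \eqref{eq:proposal.affine} with the Kazdan--Warner-type trial function $\phi = e^{-2kw}$, a short integration by parts yields
\begin{equation*}
\int_{\TT^2} (|\nabla_{g} \phi|^2 + k R_{g} \phi^2) \, d\mu_{g} = -4k^2 \int_{\TT^2} e^{-4kw} |\nabla_{g_0} w|^2 \, d\mu_{g_0},
\end{equation*}
which is strictly negative unless $w$ is constant; combined with $\lambda_1 \geq 0$ this forces $g$ to be flat. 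Consequently $\sM^{\geq 0}_k(\TT^2)/\operatorname{Diff}(\TT^2)$ coincides with the classical moduli space of flat tori, identified with the connected orbifold $(\HH / \mathrm{SL}(2, \ZZ)) \times \RR_+$ (Teichm\"uller modulus times area). A smooth path in this orbifold lifts to a smooth family of lattices in $\RR^2$, which in turn yields a smooth family of flat metrics on $\TT^2$ via any fixed background trivialization, giving the desired path modulo $\operatorname{Diff}(\TT^2)$.

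\textbf{Main obstacle.} Once \eqref{eq:proposal.affine} is in hand the argument is essentially algebraic, with no Ricci-flow or deep PDE input needed in dimension 2. The only substantive points are (a) finding the right trial function in the $\TT^2$ case, which is guided by optimizing the exponent $\alpha$ in $\phi = e^{\alpha w}$ so that $(\alpha^2 + 4k\alpha)$ is minimized (giving $\alpha = -2k$); and (b) smoothing the concatenation at $g_{\mathrm{rd}}$ in the $\SS^2$ case and lifting paths from the $\TT^2$ moduli orbifold to $\operatorname{Met}(\TT^2)$, both of which are routine.
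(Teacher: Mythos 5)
Your proof is correct, and the two $\SS^2$ items follow essentially the same route as the paper: uniformization to a conformal factor $w$, observing via the conformal invariance of Dirichlet energy and the transformation law for $R\,d\mu$ that the Rayleigh numerator is affine in $w$, and taking the straight-line path in $w$. (The paper splits off the scaling and $\operatorname{Diff}$-classification of round metrics into separate steps, whereas you quotient them out at the start by fixing $g_{\mathrm{rd}}$; the underlying affine argument is identical. Your extra bound on the denominator $\int\phi^2\,d\mu_{g_t}$ is harmless but not needed, since on a closed manifold $\lambda_1 > 0$ already follows from positivity of the Rayleigh numerator on nonzero test functions.)

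The $\TT^2$ case is where you diverge in a genuinely interesting way. The paper reuses the same conformal deformation to move any $g\in\sM^{\geq 0}_k(\TT^2)$ to a flat metric and then invokes connectedness of the moduli space of flat tori. You instead prove a \emph{rigidity} statement: $\sM^{\geq 0}_k(\TT^2)$ consists only of flat metrics, via the test function $\phi = e^{-2kw}$ (with the exponent chosen to minimize $\alpha^2 + 4k\alpha$), which makes the Rayleigh numerator identically $-4k^2\int e^{-4kw}|\nabla_{g_0}w|^2\,d\mu_{g_0} \leq 0$, forcing $w$ constant. Your computation checks out, and the resulting reduction is arguably cleaner: the conformal deformation in the paper's Step 1 is, by your rigidity, the constant path, so you are making explicit what the paper leaves implicit. (An even shorter route to the same rigidity is Gauss--Bonnet: the constant function $\phi\equiv 1$ has Rayleigh numerator $k\int R_g\,d\mu_g = 4\pi k\chi(\TT^2) = 0$, so $\lambda_1 = 0$ with $\phi\equiv 1$ a first eigenfunction, and the eigenvalue equation $-\Delta_g 1 + kR_g\cdot 1 = 0$ forces $R_g\equiv 0$; but your Kazdan--Warner variant is equally valid and generalizes better.) Both approaches then terminate in the classical fact that the moduli space of flat tori is connected. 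Your version buys transparency at the cost of one extra computation; the paper's buys uniformity of presentation across the $\SS^2$ and $\TT^2$ cases.
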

\begin{proof}
	The proof goes in three steps.
	
	\textbf{Step 1} (reduce to round metrics). Let $g \in \sM^{>0}_{k}(\SS^2)$ ($\sM^{\geq 0}_{k}(\SS^2)$). By uniformization, there exists a round metric $g_0$ on $\SS^2$ and a smooth $u : \SS^2 \to \RR$ such that $g = e^{2u} g_0$. Define $[0,1] \ni t \mapsto g_t := e^{2(tu + (1-t))} g_0 \in \operatorname{Met}(\SS^2)$, where $g_1 = g$. We claim that $g_t \in \sM^{>0}_{k}(\SS^2)$ for all $t \in (0,1)$. Indeed, for any smooth test function $f : \SS^2 \to \RR$, the 2-dimensional conformal invariance of Dirichlet energy, and 
	\[ R_{g_t} \, d\mu_{g_t} = (R_{g_0} - 2 \Delta_{g_0} (tu + (1-t))) \, d\mu_{g_0} \]
	imply that
	\[ \int_{\SS^2} (|\nabla_{g_t} f|^2 + k R_{g_t} f^2) \, d\mu_{g_t} \]
	is \emph{linear} in $t$. Since it is positive at $t=0$ ($g_0$ is round, so $R_{g_0} > 0$) and positive (nonnegative) at $t=1$ (by assumption), it follows that the quantity is positive for all $t \in (0,1)$, and all $f$. This completes the proof of the claim.
	
	\textbf{Step 2} (connectedness of moduli space). We have shown that all metrics in $\sM^{>0}_{k}(\SS^2)$ ($\sM^{\geq 0}_{k}(\SS^2)$) can be continuously deformed to a round metric within $\sM^{>0}_{k}(\SS^2)$. On the other hand, by the classification of space forms, every round metric is of the form $\varphi^* A^2 g_\star$, where $\varphi \in \operatorname{Diff}(\SS^2)$, $A > 0$, and $g_\star$ is the model round metric on the unit sphere in $\RR^3$. The second step follows by deforming $A$ to $A=1$ within the space of round metrics (a subset of $\sM^{>0}_{k}(\SS^2$)) by scaling. 
	
	\textbf{Step 3} (connectedness of full space). It remains to deform $\varphi^* g_\star$ to $g_\star$. By the path-connectedness of the space of orientation-preserving diffeomorphisms of $\SS^2$, we can deform $\varphi$ to $\pm \operatorname{Id}$ within the space of round metrics (a subset of $\sM^{>0}_{k}(\SS^2$)). The result follows from the fact that $(-\operatorname{Id})^* g_\star = g_\star$.
\end{proof}

\begin{rema} \label{rema:m.k.plus.contractibility}
	The argument can be improved to showing that $\sM^{>0}_{k}(\SS^2)$, $\sM^{\geq 0}_{k}(\SS^2)$ are contractible. Compare to the proof of Theorem \ref{theo:m.k.plus.path.yamabe} which treats $n=3$ and $k=\tfrac18$ as a consequence of Bamler--Kleiner's breakthrough for $k=\infty$  \cite{BamlerKleiner:contractibility}. Their technique may extend to $k \in [\tfrac14, \infty)$. We do not pursue this.
\end{rema}

Finally, a relationship to stable minimal hypersurfaces. If $(\bm{N}, \bm{h})$ is a Riemannian $(n+1)$-manifold with $R_{\bm{h}} > 0$ and $M \subset \bm{N}$ is a two-sided closed stable minimal hypersurface in $\bm{N}$, then Schoen--Yau \cite{SchoenYau:incompressible} first observed that the second variation formula and the Gauss equation imply that $\lambda_1(-\Delta_g + \tfrac12 R_g) > 0$ for the induced metric $g$ on $M$, i.e., $g \in \sM^{>0}_{1/2}(M)$.  Likewise, $g \in  \sM^{\geq 0}_{1/2}(M)$ if $R_{\bm{h}} \geq 0$. While they did not explicitly formulate this observation in this manner at the time, they did explicitly consider this exact operator in \cite{SchoenYau:condensation}, as did Gromov--Lawson in \cite{GromovLawson:summary}. We summarize in the following:

\begin{lemm} \label{lemm:m.12.plus.equivalences}
	The following are equivalent for closed Riemannian manifolds $(M, g)$:
	\begin{enumerate}
		\item $g \in \sM^{>0}_{1/2}(M)$ ($\sM^{\geq 0}_{1/2}(M)$);
		\item $g$ is the metric induced on $M$ when $M$ occurs as a two-sided stable minimal hypersurface in a manifold $(\bm{N}, \bm{h})$ with $R_{\bm{h}} > 0$ ($R_{\bm{h}} \geq 0$).
	\end{enumerate}
\end{lemm}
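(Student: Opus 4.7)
The plan is to handle both directions directly via elementary tensor computations, with no approximation step. The easier direction $(2) \Rightarrow (1)$ is the Schoen--Yau observation hinted at in the paragraph just before the lemma; $(1) \Rightarrow (2)$ will reduce to an explicit warped-product construction using the first eigenfunction of $-\Delta_g + \tfrac12 R_g$ as the warping factor.

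For $(2) \Rightarrow (1)$, first I would write the stability inequality for $M$ with unit normal $\nu$ in $(\bm{N}, \bm{h})$,
$$\int_M \bigl(|\nabla_g f|^2 - (|\sff|^2 + \Ric_{\bm{h}}(\nu,\nu))\, f^2\bigr)\, d\mu_g \;\geq\; 0, \qquad f \in C^\infty(M),$$
and then use the twice-traced Gauss equation $R_g = R_{\bm{h}} - 2\Ric_{\bm{h}}(\nu,\nu) + H^2 - |\sff|^2$ with $H \equiv 0$ to eliminate $\Ric_{\bm{h}}(\nu,\nu)$. This gives the identity $|\sff|^2 + \Ric_{\bm{h}}(\nu,\nu) = \tfrac12(|\sff|^2 + R_{\bm{h}} - R_g)$, and the stability inequality rearranges to
$$\int_M \bigl(|\nabla_g f|^2 + \tfrac12 R_g f^2\bigr)\, d\mu_g \;\geq\; \tfrac12 \int_M \bigl(|\sff|^2 + R_{\bm{h}}\bigr) f^2\, d\mu_g.$$
Under $R_{\bm{h}} > 0$ (resp.\ $\geq 0$) the right-hand side is strictly positive for every $f \not\equiv 0$ (resp.\ nonnegative), so the Rayleigh quotient characterization forces $\lambda_1(-\Delta_g + \tfrac12 R_g) > 0$ (resp.\ $\geq 0$).

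For $(1) \Rightarrow (2)$, I would construct $(\bm{N}, \bm{h})$ explicitly using Lemma \ref{lemm:slicing.formula.curv}. Choose a positive first eigenfunction $\phi \in C^\infty(M)$ of $-\Delta_g + \tfrac12 R_g$ (taken componentwise if $M$ is disconnected; positivity is by the strong maximum principle) with eigenvalue $\lambda_1 \geq 0$ (resp.\ $> 0$), and set
$$\bm{N} := M \times \RR, \qquad \bm{h} := g + \phi(x)^2\, dt^2.$$
This is the setup of Lemma \ref{lemm:slicing.formula.curv} with $g_t \equiv g$ and $u(x,t) \equiv \phi(x)$. Since $g_t$ is $t$-independent, the slicing formulas give $\sff_t \equiv 0$, $H_t \equiv 0$, so every slice $M \times \{t\}$ is totally geodesic in $(\bm{N}, \bm{h})$ --- in particular two-sided stable minimal --- and the scalar curvature formula collapses to
$$R_{\bm{h}} \;=\; \frac{2}{\phi}\bigl(-\Delta_g \phi + \tfrac12 R_g \phi\bigr) \;=\; 2\lambda_1,$$
which has the required sign.

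There is no real obstacle. The a priori worry in $(1) \Rightarrow (2)$ --- having to produce an ambient metric with prescribed scalar curvature sign in which $(M,g)$ sits as a stable minimal hypersurface --- dissolves because the operator $-\Delta_g + \tfrac12 R_g$ governs both the Gauss-equation bookkeeping of direction $(2) \Rightarrow (1)$ and the scalar curvature of the warped product $g + u^2\, dt^2$; taking $u$ to be its first eigenfunction makes $R_{\bm{h}}$ identically $2\lambda_1$ and saturates both directions simultaneously.
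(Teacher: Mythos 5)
Your proof follows essentially the same route as the paper's: $(2)\Rightarrow(1)$ is the Schoen--Yau stability-plus-Gauss-equation computation, and $(1)\Rightarrow(2)$ is the warped product with the first eigenfunction as warping factor (the paper uses $M\times\SS^1$ rather than $M\times\RR$, which is immaterial for the statement).

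One small slip to flag: in $(1)\Rightarrow(2)$ you write that the slices are totally geodesic ``--- in particular two-sided stable minimal.'' Totally geodesic does not imply stable in general (the equator in the round sphere is the standard counterexample), so the ``in particular'' overreaches. It does hold here, but needs a word: since $\sff_t\equiv 0$ and $R_{\bm h}\equiv 2\lambda_1$, the Gauss equation gives $\Ric_{\bm h}(\nu,\nu)=\lambda_1-\tfrac12 R_g$, so the stability inequality
\[
\int_M \bigl(|\nabla_g f|^2 - \Ric_{\bm h}(\nu,\nu)\,f^2\bigr)\,d\mu_g \;\geq\; 0
\]
is precisely $\int_M(|\nabla_g f|^2 + \tfrac12 R_g f^2)\,d\mu_g \geq \lambda_1 \int_M f^2\,d\mu_g$, which holds by the variational characterization of $\lambda_1$. (Equivalently: $\phi$ is a positive Jacobi field on each slice, which forces stability.) With that one-line justification supplied, the argument is complete.
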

\begin{proof}
	(2) $\implies$ (1) follows from the second variation formula. 
	
	(1) $\implies$ (2) follows by letting $f > 0$ be a first eigenfunction of the positive definite operator $-\Delta_g + \tfrac12 R_g$ on $M$, and setting $(\bm{N}, \bm{h}) := (M \times \SS^1, g + f^2 dt^2)$. It is not hard to see that the slices $\{ t = \text{const}\}$ are two-sided stable minimal hypersurfaces.
\end{proof}

The following more subtle result is essentially contained in the proof of \cite[Theorem 3.1]{Galloway:outermost.psc} but is not spelled out:

\begin{lemm}[{cf \cite[Theorem 3.1]{Galloway:outermost.psc}}] \label{lemm:m.12.plus.galloway}
	Suppose that $M$ is a closed two-sided minimal hypersurface in a manifold $(\bm{N}, \bm{h})$, with $R_{\bm{h}} \geq 0$, and that $M$ is strictly area-minimizing on one side (either one) among small graphical perturbations. Then, the metric $g$ induced on $M$ satisfies $g \in \overline{\sM^{>0}_{1/2}(M)}$; the closure is in the usual $C^\infty$ topology.
\end{lemm}
\begin{proof}
	By Lemma \ref{lemm:m.12.plus.equivalences}, it follows that $g \in \sM^{\geq 0}_{1/2}(M)$. Without loss of generality, 
	\[ g \in \sM^{\geq 0}_{1/2}(M) \setminus \sM^{> 0}_{1/2}(M), \]
	otherwise there is nothing to prove. By \cite[Lemma 2.3]{Galloway:outermost.psc} (whose operator coincides with $-\Delta_g + \tfrac12 R_g$ provided we take $K \equiv 0$), and restricting to the side of $M$ on which it's strictly minimizing, there exists a local foliation $M \times [0, t_0)$ of $\bm{N}$ so that $\bm{h} = g_t + u_t^2 \, dt^2$ in these coordinates, and the mean curvature scalars $H_t$ of $M \times \{t\}$ with respect to the outward unit normal are all constant. Rearranging the slicing formula for scalar curvature, and using that $R_{\bm{h}}, H_t^2, |\sff_t|^2, u \geq 0$, yields:
	\begin{align*}
		\tfrac{d}{dt} H_t 
			& = - \Delta_{g_t} u_t + \tfrac12 R_{g_t} u_t - \tfrac12 ( R_{\bm{h}}+ H_t^2 + |\sff_t|^2) u_t \\
			& \leq - \Delta_{g_t} u_t + \tfrac12 R_{g_t} u_t.
	\end{align*}
	Thus, $\lambda_1(-\Delta_{g_t} + \tfrac12 R_{g_t}) \geq 0$ by the maximum principle whenever $\tfrac{d}{dt} H_t \geq 0$, in which case the inequality is even strict unless $\tfrac{d}{dt} H_t = 0$. 
	
	It remains to prove that there exist $t_i \to 0$ with $[\tfrac{d}{dt} H_t]_{t=t_i} > 0$. Indeed, if this were false, then $\tfrac{d}{dt} H_t \leq 0$ for all $t \in [0, t_1)$ for some $t_1 \in (0, t_0)$. But $H_0 = 0$ by assumption, so $H_t \leq 0$ for all $t \in [0, t_1)$, contradicting that $M \times \{0\}$ strictly minimizes area. This completes the proof.
\end{proof}

\section{Codimension $\geq 3$ surgery in $\sM^{>0}_{k}(M)$ by B\"ar--Dahl} \label{sec:m.k.plus.connected.sum}

Our proof relies on showing the Gromov--Lawson \cite[Theorem A]{GromovLawson:classification} codimension $\geq 3$ surgery (cf. Schoen--Yau's slightly different \cite[Theorem 4]{SchoenYau:structure.psc}) maintains $\sM^{>0}_{k}$, $k \in (0, \infty)$, as it is known to maintain $\sM^{>0}_{\infty}$. The following result follows from the proof of a rather general spectral approximation result by B\"ar--Dahl \cite{BarDahl:surgery}. (We also refer the reader to \cite{AmmannDahlHumbert} for other interesting applications of such results.)

\begin{prop}[cf. {\cite[Theorem 3.1]{BarDahl:surgery}}]  \label{prop:m.k.plus.surgery}
	Let $(M_1, g_1)$ and $(M_2, g_2)$ be closed $n$-dimensional Riemannian manifolds, $n-3 \geq d \geq 0$, and $\Sigma^d$ be either a single point ($d=0$), or a $d$-dimensional sphere ($1 \leq d \leq n-3$). Fix $k$, $\delta \in (0, \infty)$. For each $i=1$, $2$, assume that $\Phi_i : \Sigma \to U_i$ is an embedding of $\Sigma$ into a smooth open $U_i \subset M_i$, and $N_i : N(\Phi_i(\Sigma)) : \Sigma \times \RR^{n-d}$ is a trivializing section of its normal bundle. Then, there exists a choice of connected-sum parameters for
	\[ g := g_1 \#_\Sigma g_2 \in \operatorname{Met}(M), \; M := M_1 \#_\Sigma M_2, \]
	where the connected sum is performed with the given trivializations $N_1$, $N_2$, as well as a diffeomorphism 
	\[ F : (M_1 \setminus U_1) \sqcup (M_2 \setminus U_2) \to M \setminus U \]
	of compact manifolds-with-boundary, for a smooth open $U \subset M$ that is diffeomorphic to $U_1 \#_\Sigma U_2$, such that the following hold:
	\begin{enumerate}
		\item $F^* g \restr (M \setminus U) = (g_1 \restr (M_1 \setminus U_1)) \sqcup (g_2 \restr (M_2 \setminus U_2))$;
		\item $\min_U R_g \geq \min_{i=1,2} \min_{U_i} R_{g_i} - \delta$;
		\item $\lambda_1(-\Delta_g + kR_g) \geq \min_{i=1,2} \lambda_1(-\Delta_{g_i} + k R_{g_i}) - \delta$.
	\end{enumerate}
\end{prop}

The parameters of the construction depend continuously on the underlying data, so it applies to continuous 1-parameter families of metrics. We summarize the result, which we only state for $d=0$:

\begin{coro} [cf. {\cite[Proposition 6.1]{Marques:deforming.psc}}] \label{coro:m.k.plus.surgery.families}
	Let $M_1$ and $M_2$ be closed $n$-manifolds, $n \geq 3$, $k$, $\delta \in (0, \infty)$. For each $i=1$, $2$, suppose that we have the following continuous paths:
	\begin{itemize}
		\item $[0, 1] \ni \mu \mapsto g_{i,\mu} \in \operatorname{Met}(M_i)$;
		\item $[0,1] \ni \mu \mapsto p_{i,\mu} \in U_i$ for a smooth open $U_i \subset M_i$;
		\item $[0,1] \ni \mu \mapsto \{ e^{(j)}_{i,\mu} \}_{j=1,\ldots,n}$ for a $g_i$-orthonormal basis of $T_{p_{i,\mu}} M_i$.
	\end{itemize}
	There exists a uniform (in $\mu$) choice of connected-sum parameters for
	\[ g_\mu := g_{1,\mu} \# g_{2,\mu} \in \operatorname{Met}(M), \; M := M_1 \# M_2, \]
	where the connected sum is performed with the given trivializations of $T_{p_{i,\mu}} M_i$, so that $\mu \mapsto g_\mu$ is continuous, and a diffeomorphism 
	\[ F : (M_1 \setminus U_1) \sqcup (M_2 \setminus U_2) \to M \setminus U \]
	of compact manifolds-with-boundary, for a smooth open $U \subset M$ that is diffeomorphic to $U_1 \# U_2$, such that the following hold for all $\mu \in [0,1]$:
	\begin{enumerate}
		\item $F^* g_\mu \restr (M \setminus U) = (g_{1,\mu} \restr (M_1 \setminus U_1)) \sqcup (g_{2,\mu} \restr (M_2 \setminus U_2))$;
		\item $\min_{U} R_{g_\mu} \geq \min_{i=1,2} \min_{U_i} R_{g_{i,\mu}} - \delta$;
		\item $\lambda_1(-\Delta_{g_\mu} + k R_{g_\mu}) \geq \min_{i=1,2} \lambda_1(-\Delta_{g_{i,\mu}} + k R_{g_{i,\mu}}) - \delta$.
	\end{enumerate}
\end{coro}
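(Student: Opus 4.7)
The strategy is to re-run the construction of Theorem \ref{theo:m.k.plus.surgery} and Corollary \ref{coro:m.k.plus.surgery} while verifying that every parameter that was chosen can be made uniform in $\mu \in [0,1]$, and that every object produced depends continuously on $\mu$. Compactness of $[0,1]$ and continuity of $\mu \mapsto (g_{i,\mu}, p_{i,\mu}, \{e^{(j)}_{i,\mu}\})$ will hand us uniform versions of all the bounds that appeared before.

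First I would set up continuously varying exponential charts. Using the chosen frames $\{e^{(j)}_{i,\mu}\}$, the $g_{i,\mu}$-exponential map at $p_{i,\mu}$ identifies a ball in $\mathbb{R}^n$ with a neighborhood of $p_{i,\mu}$; shrinking uniformly, we get open sets $U_i$ and a common radius $R_i>0$ on which these charts are defined for all $\mu$. In these charts, $g_{i,\mu}$ converges to the Euclidean metric at the origin uniformly in $\mu$, and all its $C^k$ norms are uniformly bounded. Likewise, $\mu \mapsto \lambda_1(-\Delta_{g_{i,\mu}} + k R_{g_{i,\mu}})$ is continuous, the principal eigenvalue is simple with a uniform gap, so there is a continuous family of positive $L^2$-normalized first eigenfunctions $\psi_{i,\mu}$ with uniform $C^2$ bounds (cf.\ \cite[Appendix A]{MantoulidisSchoen:bartnik}). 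Hence all quantities that entered the choices $(r_1,r_2,\theta_1,L,\sigma,\eps)$ in Theorem \ref{theo:m.k.plus.surgery} and Corollary \ref{coro:m.k.plus.surgery} --- namely uniform bounds for $R_{g_{i,\mu}}$, $\|\psi_{i,\mu}\|_{C^2}$, $\|\nabla_{g_{i,\mu}} \psi_{i,\mu}\|_{L^\infty}$, the constant $\Lambda$ from \eqref{eq:m.k.plus.surgery.lambda.upper}, the asymptotic constants appearing in \eqref{eq:m.k.plus.surgery.scal}--\eqref{eq:m.k.plus.surgery.hess}, and the quantitative statement \eqref{eq:m.k.plus.surgery.rprime.3} --- admit bounds independent of $\mu$.

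Next I would fix, once and for all, uniform numerical choices of $r_1, r_2, \theta_1, L, \eps$ that make the single-$\mu$ argument work for every $\mu \in [0,1]$ simultaneously. With those choices frozen, the graphing function from Steps 1--2 of Theorem \ref{theo:m.k.plus.surgery} can be produced continuously in $\mu$: Step 1's convex $u$ on $[r_1,r]$ with prescribed vanishing data at $r$ can be taken $\mu$-independent; Step 2's $u_{\alpha,\beta}$ on $(\alpha,r_1]$ has $\alpha,\beta$ determined continuously by the matching data at $r_1$; Step 3's mollification near $r=r_1$ uses a fixed mollifier, so the smoothed profile $\tilde g_{u}$ and the ambient tilt metric vary continuously in $\mu$. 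Step 4's isotopy to the round product, given by \cite[Lemma 2]{GromovLawson:classification}, can be arranged continuously in $\mu$ by fixing the isotopy parameters at the outset. Finally, the neck-and-glue step of Corollary \ref{coro:m.k.plus.surgery} amounts to prescribing a common length $L$ on $\SS^{d}(1) \times \SS^{n-d-1}(\eps) \times [-L, L]$ and concatenating via the trivializations $\{e^{(j)}_{i,\mu}\}$, which by construction varies continuously in $\mu$. This produces the required continuous path $\mu \mapsto g_\mu \in \operatorname{Met}(M)$ and the diffeomorphism $F$ (which is $\mu$-independent by the uniform choice of radii).

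Conclusions (1) and (2) of Corollary \ref{coro:m.k.plus.surgery.families} are then immediate from the corresponding conclusions of Corollary \ref{coro:m.k.plus.surgery}, applied at every $\mu$ with the uniform parameters. For (3), the argument in Corollary \ref{coro:m.k.plus.surgery} deduces the eigenvalue lower bound from Lemma \ref{lemm:kleiner.lott.lambda} together with a uniform $L^2$-concentration estimate for the first eigenfunction off the neck (driven by the Agmon-type Lemma \ref{lemm:kleiner.lott.agmon} and the bounds \eqref{eq:m.k.plus.surgery.lambda.upper}--\eqref{eq:m.k.plus.surgery.eps.star.upper}); both ingredients hold uniformly in $\mu$ with the frozen parameters, so the same inequality survives for each $\mu$. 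The step I expect to be most delicate is verifying the uniform continuity of the eigenfunction family $\psi_{i,\mu}$ in $C^2$ (needed for \eqref{eq:m.k.plus.surgery.rprime.3} and for Step 3's smoothing to preserve the key inequalities \eqref{eq:m.k.plus.surgery.eigenfunction.annulus.tilde}--\eqref{eq:m.k.plus.surgery.scal.annulus.inner.tilde}), but this is standard Kato--Rellich perturbation theory for a simple principal eigenvalue coupled with elliptic regularity, and it yields uniform bounds precisely because $[0,1]$ is compact.
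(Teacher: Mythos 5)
Your proposal is correct and follows exactly the strategy the paper uses: the paper's ``proof'' of Corollary~\ref{coro:m.k.plus.surgery.families} consists only of the one-sentence remark preceding the statement, namely that the construction in Theorem~\ref{theo:m.k.plus.surgery} and Corollary~\ref{coro:m.k.plus.surgery} depends continuously on the underlying data, so the family version follows by making uniform (via compactness of $[0,1]$) choices of the surgery parameters. You have simply unpacked that remark carefully --- continuous charts, continuous eigenfunction family, frozen parameters, continuous profile functions in Steps 1--4 --- which is exactly the content the paper leaves implicit.
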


\section{Some results from Kleiner--Lott's notes} \label{app:kleiner.lott}

In this appendix we collect two results from \cite[Section 93.12]{KleinerLott:notes}, where operators of the form $-4 \Delta_g + V$ were studied.

\begin{lemm} [cf. {\cite[Lemma 93.16]{KleinerLott:notes}}] \label{lemm:kleiner.lott.lambda}
	Let $(M, g)$ be a closed $n$-dimensional Riemannian manifold. Suppose that $X \subset M$ be a compact submanifold-with-boundary of the same dimension as $M$. 	If $\psi$ denotes a first eigenfunction of $-\Delta_g + kR_g$ on $M$, then:
	\begin{align*}
		\lambda_1(-\Delta_g + kR_g) 
			& \leq \lambda_1((-\Delta_g + kR_g) \restr X) \\
			& \leq \lambda_1(-\Delta_g + kR_g) + \frac{\int_M |\nabla_g \eta|^2 \psi^2 \, d\mu_g}{\int_M \eta^2 \psi^2 \, d\mu_g}
	\end{align*}
	for all $\eta \in C^\infty_c(X \setminus \partial X)$. Here, $\lambda_1((-\Delta_g + kR_g) \restr X)$ denotes the first Dirichlet eigenvalue of $-\Delta_g + kR_g$ restricted to $X$.
\end{lemm}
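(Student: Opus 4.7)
Both inequalities come from exhibiting explicit competitors in Rayleigh quotients. For the lower bound $\lambda_1(-\Delta_g + kR_g) \leq \lambda_1((-\Delta_g + kR_g)\restr X)$, the plan is to take a Dirichlet first eigenfunction on $X$, extend it by $0$ to all of $M$, and observe that this extension is an admissible $H^1(M)$ test function whose Rayleigh quotient for $-\Delta_g + kR_g$ on $M$ equals $\lambda_1((-\Delta_g + kR_g)\restr X)$; the variational characterization of $\lambda_1(-\Delta_g + kR_g)$ then yields the inequality.

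For the upper bound, I would use the product $\eta\psi \in C^\infty_c(\operatorname{int} X)$ as a test function for the Dirichlet eigenvalue problem on $X$. The key identity to establish by integration by parts on $M$ (there are no boundary terms because $\eta$ is compactly supported in $\operatorname{int} X$) is
\begin{equation*}
\int_M |\nabla_g(\eta\psi)|^2 \, d\mu_g = \int_M \psi^2 |\nabla_g \eta|^2 \, d\mu_g - \int_M \eta^2 \psi \, \Delta_g \psi \, d\mu_g,
\end{equation*}
which follows by expanding $|\nabla_g(\eta\psi)|^2 = \psi^2|\nabla_g\eta|^2 + 2\eta\psi\langle \nabla_g \eta,\nabla_g\psi\rangle + \eta^2|\nabla_g\psi|^2$ and integrating $\int \langle \nabla_g \eta^2, \psi\nabla_g\psi\rangle = -\int \eta^2(|\nabla_g\psi|^2 + \psi\Delta_g\psi)$. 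Using the eigenvalue equation $-\Delta_g\psi + kR_g\psi = \lambda_1(-\Delta_g + kR_g)\psi$ to substitute for $-\psi\Delta_g\psi$ inside the integral then gives
\begin{equation*}
\int_M |\nabla_g(\eta\psi)|^2 \, d\mu_g + k\int_M R_g (\eta\psi)^2 \, d\mu_g = \lambda_1(-\Delta_g + kR_g)\int_M (\eta\psi)^2 \, d\mu_g + \int_M \psi^2|\nabla_g \eta|^2 \, d\mu_g.
\end{equation*}
Dividing by $\int_M (\eta\psi)^2 \, d\mu_g > 0$ (nonzero because $\psi > 0$ on $M$ as a principal eigenfunction, and $\eta \not\equiv 0$ may be assumed without loss of generality) and applying the Rayleigh-quotient characterization of the Dirichlet eigenvalue $\lambda_1((-\Delta_g + kR_g)\restr X)$ with the competitor $\eta\psi$ yields the claimed upper bound.

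The proof is essentially bookkeeping; the only step requiring a little care is ensuring $\eta\psi$ is a valid Dirichlet test function, which is immediate from $\eta \in C^\infty_c(\operatorname{int} X)$ and smoothness and positivity of $\psi$ on $M$. I do not anticipate any real obstacle.
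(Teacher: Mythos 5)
Your proof is correct. The paper itself simply cites Kleiner--Lott's Lemma 93.16 (which is stated there for the operator $-4\Delta + V$) with the substitution $V = 4kR_g$ and a rescaling, whereas you give the self-contained argument. That said, the argument you give is precisely the one underlying the Kleiner--Lott lemma: the first inequality by extending the Dirichlet first eigenfunction on $X$ by zero and using it as a competitor on $M$, and the second inequality by using $\eta\psi$ as a Dirichlet competitor on $X$ together with the integration-by-parts identity
\[
\int_M |\nabla_g(\eta\psi)|^2 \, d\mu_g = \int_M \psi^2|\nabla_g\eta|^2 \, d\mu_g - \int_M \eta^2\psi\Delta_g\psi \, d\mu_g
\]
and the eigenvalue equation for $\psi$. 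Your computation is right, the admissibility of both competitors ($\psi_X^{\mathrm{Dir}}$ extended by zero lies in $H^1_0(\operatorname{int} X) \subset H^1(M)$, and $\eta\psi \in C^\infty_c(\operatorname{int} X)$) is handled correctly, and the positivity of $\psi$ is correctly used to ensure the denominator is nonzero. So your route is the unpacked version of the paper's citation; each has its place — the citation is shorter, yours keeps the paper self-contained.
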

\begin{proof}
	This follows by applying \cite[Lemma 93.16]{KleinerLott:notes} with $4k R_g$ in place of $V$ and rescaling by $2$.
\end{proof}

We will seek to control the right hand side of conclusion (2) above using the following coercivity (``Agmon-type'') estimate:

\begin{lemm} [cf. {\cite[Lemma 93.21]{KleinerLott:notes}}] \label{lemm:kleiner.lott.agmon}
	With the notation of Lemma \ref{lemm:kleiner.lott.lambda}, given a nonnegative smooth $\phi : M \to \RR$, suppose that a smooth $f : M \to \RR$ satisfies:
	\begin{equation} \label{eq:kleiner.lott.agmon.assumption}
		|\nabla_g f|^2 \leq kR_g - \lambda - c \text{ on } \spt \phi,
	\end{equation}
	for some $c > 0$, and $\lambda := \lambda(-\Delta_g + kR_g)$. Then,
	\begin{align} \label{eq:kleiner.lott.agmon.conclusion}
		& \Vert e^f \phi \psi \Vert_{L^2(M)} \\
		& \leq 64c^{-1} \left( \Vert e^f \Delta_g \phi \Vert_{L^\infty(M)} + \Vert e^f \nabla_g \phi \Vert_{L^\infty(M)} (\lambda - \min k R_g)^{1/2} \right) \Vert \psi \Vert_{L^2(M)}, \nonumber 
	\end{align}
	where the constant $C$ depends only on $c$. The same holds true if $f$ is only Lipschitz.
\end{lemm}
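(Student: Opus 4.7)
The plan is an Agmon-type integration-by-parts, driven by the substitution $u := e^f \psi$, followed by a ``self-improvement'' of the gradient-cutoff term so that $\|e^f \nabla_g \phi\|_\infty$ enters with only the first power. I will write $V := kR_g$ and $\alpha := (\lambda - \min_M V)^{1/2}$ for brevity; note $\alpha \geq 0$ since $\lambda$ is the principal eigenvalue of $-\Delta_g + V$.

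\textbf{Step 1 (master identity).} Conjugating $(-\Delta_g + V - \lambda)\psi = 0$ by $e^f$ gives, for $u = e^f \psi$, the drift equation $-\Delta_g u + 2\nabla_g f \cdot \nabla_g u + (V - \lambda - |\nabla_g f|^2 + \Delta_g f)\, u = 0$. Multiply by $\phi^2 u$, integrate, and integrate by parts on the Laplacian and on the drift term $2 \phi^2 u \nabla_g f \cdot \nabla_g u = \phi^2 \nabla_g f \cdot \nabla_g(u^2)$. The two $\Delta_g f$ terms that appear \emph{cancel}, and a further IBP on $-2\int \phi u \nabla_g\phi\cdot\nabla_g u = \int u^2\nabla_g\cdot(\phi\nabla_g\phi)$ produces the identity
\begin{equation} \label{eq:plan.agmon.master}
    \int_M \phi^2 |\nabla_g u|^2 + \int_M \phi^2 u^2 (V - \lambda - |\nabla_g f|^2) = \int_M u^2 \phi\, \Delta_g \phi + \int_M u^2 |\nabla_g \phi|^2 + 2 \int_M u^2 \phi\, \nabla_g \phi \cdot \nabla_g f.
\end{equation}
The cancellation of $\Delta_g f$ is the reason this identity extends to Lipschitz $f$ (via mollification at the end).

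\textbf{Step 2 (coercivity and the Rayleigh-type bound).} The hypothesis \eqref{eq:kleiner.lott.agmon.assumption} makes the second integrand on the left of \eqref{eq:plan.agmon.master} at least $c \cdot \phi^2 u^2$; dropping $\int \phi^2 |\nabla_g u|^2 \geq 0$ yields $c I^2 \leq \text{RHS of } \eqref{eq:plan.agmon.master}$ where $I := \|e^f \phi \psi\|_{L^2}$. To bound the $\nabla_g f$ cross term, I will use the elementary consequence of $(-\Delta_g + V)\psi = \lambda \psi$ obtained by testing against $\psi$: $\int (V-\lambda)\psi^2 = -\int |\nabla_g \psi|^2 \leq 0$, which, after splitting into positive and negative parts of $V-\lambda$, yields
\[
    \int_M \psi^2 (V-\lambda)_+ \leq \int_M \psi^2(\lambda - V)_+ \leq (\lambda - \min V)\, \|\psi\|_{L^2}^2 = \alpha^2 \|\psi\|_{L^2}^2.
\]

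\textbf{Step 3 (bounding the right-hand side of \eqref{eq:plan.agmon.master}).} By Cauchy--Schwarz,
\[
    \left| \int u^2 \phi \Delta_g \phi \right| \leq I \cdot \|e^f \Delta_g\phi\|_\infty \|\psi\|_{L^2} =: I\cdot \mathsf{A}.
\]
For the cross term, use $|\nabla_g f|^2 \leq V-\lambda$ on $\spt \phi$ (pointwise from \eqref{eq:kleiner.lott.agmon.assumption}), factor out $\|e^f \nabla_g \phi\|_\infty$ pointwise, and apply Cauchy--Schwarz together with Step 2:
\[
    2 \left| \int u^2 \phi \nabla_g\phi \cdot \nabla_g f \right| \leq 2 \|e^f\nabla_g\phi\|_\infty \int e^f \psi^2 \phi (V-\lambda)^{1/2} \leq 2 I \cdot \|e^f\nabla_g\phi\|_\infty \alpha \|\psi\|_{L^2} =: 2 I \mathsf{B}.
\]
The naive bound $\int u^2 |\nabla_g\phi|^2 \leq \|e^f\nabla_g\phi\|_\infty^2 \|\psi\|_{L^2}^2$ is too lossy (wrong power of $\|e^f\nabla_g\phi\|_\infty$), so I self-improve it in Step~4.

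\textbf{Step 4 (self-improvement).} Integrate by parts on $\int \psi^2 e^{2f} |\nabla_g\phi|^2 = \int \nabla_g \phi \cdot (\psi^2 e^{2f} \nabla_g\phi)$, moving the divergence onto the full product, to get
\[
    \int u^2 |\nabla_g \phi|^2 = - \int u^2 \phi\, \Delta_g \phi - 2\int \phi \psi e^{2f} \nabla_g \psi \cdot \nabla_g \phi - 2 \int \phi \psi^2 e^{2f} \nabla_g f \cdot \nabla_g \phi.
\]
The three terms are bounded exactly as in Step~3: the Laplacian term gives $I \mathsf{A}$, the $\nabla_g\psi$ term gives $I \mathsf{B}$ (via $(e^f|\nabla_g\phi|)|\nabla_g\psi|$ and $\int |\nabla_g \psi|^2 \leq \alpha^2 \|\psi\|_{L^2}^2$ from the Rayleigh identity), and the $\nabla_g f$ term gives $I \mathsf{B}$ (as in Step 3). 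Thus $\int u^2 |\nabla_g\phi|^2 \leq I\mathsf{A} + 4 I \mathsf{B}$.

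\textbf{Step 5 (combine).} Assembling,
\[
    c I^2 \leq I\mathsf{A} + (I\mathsf{A} + 4 I \mathsf{B}) + 2 I \mathsf{B} = 2 I \mathsf{A} + 6 I \mathsf{B},
\]
and dividing by $I$ (the case $I = 0$ is trivial) gives $I \leq 6 c^{-1}(\mathsf{A} + \mathsf{B})$, which is the desired \eqref{eq:kleiner.lott.agmon.conclusion} with a better constant than $64c^{-1}$. The Lipschitz case follows by mollifying $f$, noting that $\|e^{f_\varepsilon}\nabla_g\phi\|_\infty, \|e^{f_\varepsilon}\Delta_g\phi\|_\infty$ and the hypothesis \eqref{eq:kleiner.lott.agmon.assumption} (with $c$ slightly decreased) survive the limit. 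The main obstacle is recognizing that the naive estimate of $\int u^2 |\nabla_g\phi|^2$ is insufficient and must be re-integrated by parts so that $\|e^f \nabla_g \phi\|_\infty$ appears linearly rather than quadratically.
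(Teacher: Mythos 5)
Your proof is correct, and gives a sharper constant ($6c^{-1}$ in place of $64c^{-1}$). It is also a genuinely different route from the paper's: the paper's proof is a one-line citation to Kleiner--Lott's Lemma~93.21, applied with $4kR_g$ in place of $V$ and $4c$ in place of $c$ and a rescaling by~$2$, whereas you give a self-contained Agmon-type derivation directly from the eigenvalue equation. Your master identity in Step~1 (where the $\Delta_g f$ terms cancel, which is what makes the Lipschitz extension painless), the two Rayleigh-type bounds $\int_M \psi^2 (kR_g-\lambda)_+ \le \alpha^2 \|\psi\|_{L^2}^2$ and $\int_M |\nabla_g\psi|^2 \le \alpha^2\|\psi\|_{L^2}^2$ (both valid, with $\alpha\ge 0$ since $\lambda\ge\min_M kR_g$, by testing the eigenvalue equation against $\psi$), and the Step~4 re-integration by parts that makes $\|e^f\nabla_g\phi\|_\infty$ enter only linearly are all correct; the last move is precisely what is needed to reproduce the structure of \eqref{eq:kleiner.lott.agmon.conclusion}, since a naive Cauchy--Schwarz on $\int u^2|\nabla_g\phi|^2$ would yield $\|e^f\nabla_g\phi\|_\infty^2$. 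One small presentational inconsistency: in Step~4 you say the $\nabla_g\psi$ and $\nabla_g f$ cross terms each ``give $I\mathsf{B}$,'' but with the leading factors of $2$ they each contribute $2I\mathsf{B}$ --- which is of course how you arrive at the stated total $4I\mathsf{B}$. The benefit of your version is that it makes explicit exactly where the hypothesis \eqref{eq:kleiner.lott.agmon.assumption} and the exponent $1/2$ on $(\lambda-\min kR_g)$ enter; the benefit of the paper's one-liner is its brevity and its uniformity with Lemma~\ref{lemm:kleiner.lott.lambda}, which is obtained from the same reference by the identical rescaling.
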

\begin{proof}
	This follows by applying \cite[Lemma 93.21]{KleinerLott:notes} with $4k R_g$ in place of $V$ and $4c$ in place of $c$ and rescaling by $2$.
\end{proof}

\section{Mean convex foliation refinement of B\"ar--Hanke gluing} \label{app:b.h.gluing}

There are several important scalar curvature smoothing results in the literature; see, e.g., Gromov--Lawson \cite[Theorem 5.7]{GromovLawson:fundamental.group}, Miao \cite[Theorem 1]{Miao:pmt.corners}, Brendle--Marques--Neves \cite[Theorem 5]{BrendleMarquesNeves:min.oo}, Gromov \cite[p. 705]{Gromov:metric.inequalities}). For our Bartnik mass computation, we will need a more refined smoothing theorem that respects the sign of both the scalar curvature and the mean curvature along foliations near the boundary.

We derive this from a recent beautiful construction of B\"ar--Hanke \cite{BarHanke2021boundary,BarHanke2021flexibility}, whose work applies to other type of open partial relations as well, and even allows for the simultaneous treatment of families of metrics. For readers' convenience and for the completeness of this paper, we include the results relevant to our specific purpose, as well as the modifications needed to ensure the mean curvature inequality in the interior, which was not handled in the original construction but nevertheless follows from it.

We introduce some necessary notation and definitions.

Given a Riemannian manifold-with-boundary $(\bm{M},\bm{g})$ with compact nonempty boundary $M := \partial \bm{M}$, we write the metric in a tubular neighborhood $\bm{U}_M$ of $M$ in Fermi coordinates $(x, t)$ as follows:
\begin{equation} \label{eq.metric.BarHanke}
	\bm{g}(x,t)= dt^2 + g_t(x), \;  x \in M, \; t\in [0,\eps_M).
\end{equation}
Here $g_t(x)$ is a smooth family of Riemannian metrics on $M$. Note that the vector field $-\partial_t$ is the outward pointing unit normal to $M_t$, the level surface of the distance function $t$. Throughout this appendix, we use $\dot g_t$, $\ddot{g}_t$, etc. to denote $t$-derivatives of $g_t$. The second fundamental form $\sff_t$ and mean curvature scalar $H_t$ of the Fermi image of $M \times \{t\}$ in $(\bm{M}, \bm{g})$, computed with $-\partial_t$ as the outward pointing normal, equal
\begin{equation}\label{eq.H.slicing.BarHanke}
	\sff_t=-\tfrac12 \dot g_t, \; H_t =-\tr_{g_t} \dot g_t.
\end{equation}
This follows from Lemma \ref{lemm:slicing.formula.curv} (see also \cite[(14)]{BarHanke2021boundary}).

\begin{defi}[$C$-normal metrics, {\cite[Definition 21]{BarHanke2021boundary}}] 
	Let $C\in \RR$. A Riemannian metric $\bm{g}$ is said to be $C$-normal if
	\[ g_t(x) = g_0(x) + t g_1(x) - Ct^2 g_0(x). \]
	in the notation of \eqref{eq.metric.BarHanke}. In the notation of \eqref{eq.H.slicing.BarHanke}, $g_1 = -2 \sff_0$.
\end{defi}

\begin{rema} \label{rema:BarHanke.Cnormal.gluing}
	Two $C$-normal metrics glue together smoothly if and only if their $g_0$, $C$ coincide and their $\sff_0$ are additive inverses.
\end{rema}

\begin{theo}[cf. {\cite[Theorem 27]{BarHanke2021boundary}}] \label{theo:refined.gluing.single}
	Let $(\bm{M}, \bm{g})$ be a Riemannian manifold-with-compact-boundary $M := \partial \bm{M}$, and that $k$ is a symmetric 2-tensor on $M$ with:
	\begin{equation} \label{eq:refined.gluing.single.k}
		\tr_{g_0} k \leq H_0.
	\end{equation}
	Fix $\eta > 0$. There exists $C_0 = C_0(\bm{g}, \eta)$ such that, for every $C \geq C_0$ and neighborhood $\bm{U}$ of $M$ there is a tubular neighborhood $\tilde{\bm{U}} \subset \subset \bm{U} \cap \bm{U}_M$ of $M$ and a metric $\tilde{\bm{g}}$ on $\bm{M}$ satisfying:
	\begin{enumerate}
		\item $\tilde{\bm{g}} \equiv \bm{g}$ on $\bm{M} \setminus \tilde{\bm{U}}$ and $\Vert \tilde{\bm{g}} - \bm{g} \Vert_{C^0(\tilde{\bm{U}}, \bm{g})} \leq \eta$;
		\item $\tilde{\bm{g}} \restr M \equiv \bm{g} \restr M$;
		\item $\tilde{\sff}_0 \equiv - 2 k$ on $M$;
		\item $R_{\tilde{\bm{g}}} \geq R_{\bm{g}} - \eta$ on $\tilde{\bm{U}}$;
		\item $\tilde{H}_t \geq \tr_{g_0} k - \eta$ in Fermi coordinates on $\tilde{\bm{U}}$;
		\item $\tilde{\bm{g}}$ is $C$-normal and $\tilde{\bm{g}} - \bm{g}$ has no $dt$ factors in Fermi coordinates on $\tilde{\bm{U}}$;
	\end{enumerate}
	Above, $\tilde{\sff}_t$, $\tilde{H}_t$ are the second fundamental form and mean curvature scalar of the Fermi image of $M \times \{t\}$ in $(\bm{M}, \tilde{\bm{g}})$ with respect to $-\partial_t$ as the outward pointing normal.
\end{theo}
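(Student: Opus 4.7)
The plan is to adapt the B\"ar--Hanke refined gluing construction \cite{BarHanke2021boundary, BarHanke2021flexibility}, which produces $C$-normal metrics with prescribed boundary second fundamental form and controlled scalar curvature. The only new feature needed for our purposes is the verification of conclusion (5), the mean curvature inequality along the Fermi foliation, which is not explicitly recorded in their statement but follows directly from the same construction.

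First I will work in Fermi coordinates $(x, t)$ of $\bm{g}$ near $M$ and set up a target $C$-normal profile
\[
\hat g_t(x) := g_0(x) + 4t\, k(x) - C t^2\, g_0(x), \qquad t \in [0, \tau],
\]
for a small $\tau > 0$ to be chosen depending on $C$. By \eqref{eq.H.slicing.BarHanke}, this profile automatically has $\hat{\sff}_0 = -2k$ and $\hat H_0 = -2\tr_{g_0} k$. A direct application of Lemma \ref{lemm:slicing.formula.curv} shows that the quadratic kickback $\ddot{\hat g}_t \equiv -2C g_0$ contributes a term of order $C$ (with beneficial sign in the B\"ar--Hanke convention) to the scalar curvature of $dt^2 + \hat g_t$, providing a buffer that can absorb perturbations. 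Meanwhile, the mean curvature evolves via the Riccati-type identity $\partial_t \hat H_t = |\dot{\hat g}_t|^2 - \tr_{\hat g_t}(\ddot{\hat g}_t)$, so $\hat H_t$ grows at rate proportional to $C$ as $t$ increases.

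Next I will glue this profile to the original metric using a smooth $t$-cutoff $\chi$ supported in $[0, \tau]$ with $\chi \equiv 1$ on $[0, \tau/4]$, defining
\[
\tilde g_t(x) := g_0(x) + t\, \tilde g_1(x, t) - C t^2\, g_0(x),
\]
where $\tilde g_1(x, t)$ interpolates smoothly between $4k(x)$ on $\{\chi \equiv 1\}$ and the Taylor coefficient $[\dot g_t^{\mathrm{orig}}]_{t=0}$ near $t = \tau$, and extend by $g_t^{\mathrm{orig}}$ beyond $t = \tau$. Set $\tilde{\bm{g}} := dt^2 + \tilde g_t$ and take $\tilde{\bm{U}}$ to be a tubular neighborhood of $M$ strictly inside $\{t < \tau\}$ with $\tilde{\bm{U}} \subset\subset \bm{U} \cap \bm{U}_M$. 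Conclusions (1), (2), (6) are immediate from the construction (the absence of $dt$ factors in $\tilde{\bm{g}} - \bm{g}$ follows because only $g_t$ is modified), and (3) holds since $\tilde{\sff}_0 = -\tfrac{1}{2}\dot{\tilde g}_0 = -\tfrac{1}{2}(4k) = -2k$. For (4), the interpolation errors in the scalar curvature are of size $O(|\partial_t \tilde g_1|) \sim O(1/\tau)$, which are absorbed by the $O(C)$ buffer from the quadratic kickback provided $C\tau^2$ is sufficiently large.

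Finally, the mean curvature bound (5) follows from the initial value $\tilde H_0 = -2\tr_{g_0} k$ combined with the $C$-driven growth $\partial_t \tilde H_t \approx -\tr_{\tilde g_t}\ddot{\tilde g}_t = 2Cn$ coming from the kickback, which dominates the errors from $\partial_t \tilde g_1$ once $C$ is large. The hypothesis $\tr_{g_0} k \leq H_0$ is what guarantees compatibility between the prescribed profile and the original metric, ensuring the interpolation in the transition region near $t = \tau$ never forces $\tilde H_t$ below $\tr_{g_0} k - \eta$. The main obstacle I anticipate is the delicate balancing of $C$, $\tau$, and $\chi$: the interpolation region must be thin enough that the cutoff-derivative errors remain subcritical to the scalar curvature buffer, yet wide enough to smoothly match metrics whose Taylor coefficients at $t = 0$ differ by $4k - [\dot g_t^{\mathrm{orig}}]_{t=0}$. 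This is resolved by first fixing $C_0 = C_0(\bm{g}, \eta)$ large depending only on $|k|$, $|\dot g_0^{\mathrm{orig}}|$, and their derivatives, and then choosing $\tau = \tau(C, \bm{g}, \eta)$, shrinking both to maintain $C\tau^2 \gg 1$ while keeping $\tilde{\bm{U}}$ inside $\bm{U}$.
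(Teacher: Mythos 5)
Your one-step interpolation ansatz has a genuine gap that the paper avoids by decomposing into two separate propositions. You propose
\[
\tilde g_t(x) := g_0(x) + t\, \tilde g_1(x, t) - C t^2\, g_0(x),
\]
with $\tilde g_1$ interpolating between $4k$ near $t=0$ and $[\dot g_t^{\mathrm{orig}}]_{t=0}$ near $t=\tau$, and then extending by $g_t^{\mathrm{orig}}$ for $t\geq \tau$. But at $t = \tau$ your ansatz forces $\tilde g_\tau = g_0 + \tau\dot g_0^{\mathrm{orig}} - C\tau^2 g_0$, whereas the original metric has $g_\tau^{\mathrm{orig}} = g_0 + \tau\dot g_0^{\mathrm{orig}} + \tfrac12\tau^2\ddot g_0^{\mathrm{orig}} + R_\tau$; these differ by the quadratic and remainder Taylor terms, so the gluing in conclusion (1) simply fails. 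Your ansatz only allocates freedom to the \emph{linear} Taylor coefficient and hard-codes the quadratic one to $-Cg_0$. This is exactly why the paper separates the argument: Proposition \ref{prop.BarHankedefor.step1} first makes $\bm{g}$ genuinely $C$-normal (flattening $\ddot g_0$ and the remainder $R_t$) without touching $g_0$ or $\sff_0$, and only then Proposition \ref{prop.BarHankedeform.step2} interpolates the linear term from $-2\sff_0$ to $-2k$ inside the $C$-normal class. Moreover, that flattening step relies crucially on the logarithmic cutoff of Lemma \ref{lemma.logcutoff}, whose derivatives satisfy $|\tau_{\delta,\eps}^{(l)}(t)|\lesssim t^{-l}|\log\delta|^{-1}$; the standard cutoff you propose contributes $\chi''(t)\,t^2\ddot g_0 = O(1)$ errors to the scalar curvature in the transition region, which cannot be reabsorbed once you are matching back to $\bm{g}$ itself (the $C$-buffer only helps inside the $C$-normal regime, not at the seam with the unchanged original metric). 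If you want a single-shot argument you would need to enrich your ansatz to cut off the full Taylor tail of $\bm g$ simultaneously with the linear interpolation --- in effect recombining the two steps --- and it would still require the logarithmic cutoff.

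There is also a sign/factor problem with your profile and conclusion (5). Your $\hat g_t = g_0 + 4tk - Ct^2 g_0$ gives $\hat\sff_0 = -2k$ and thus $\hat H_0 = \tr_{g_0}\hat\sff_0 = -2\tr_{g_0}k$, yet conclusion (5) demands $\tilde H_t \geq \tr_{g_0}k - \eta$; at $t=0$ this fails whenever $\tr_{g_0}k > 0$ and $\eta$ is small, and the $C$-driven growth you invoke cannot help at $t=0$. (This points to a latent inconsistency between conclusions (3) and (5) in the theorem statement as written; the construction in Proposition \ref{prop.BarHankedeform.step2} in fact delivers $\tilde\sff_0 = k$, for which $\tilde H_0 = \tr_{g_0}k$ and (5) is natural. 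The correct target $C$-normal first-order data is $g_1 = -2k$, i.e., the profile $g_0 - 2tk - Ct^2 g_0$.) In the paper's Step 2, the hypothesis $\tr_{g_0}k \leq H_0$ enters precisely at this point, combined with the cutoff bound $\chi'\leq 1$ of Lemma \ref{lemm.chi.cutoff}, to show in \eqref{eq.mean.curvature.foliation} that $-\tfrac12\tr_{g_0}\dot{\tilde g}_t \geq \tr_{g_0}k + (n-1)Ct$, and then \eqref{eq.BarHankedeform.step2.trg} handles the replacement of $\tr_{g_0}$ by $\tr_{\tilde g_t}$. None of this is established by your proposal.
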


\begin{rema} \label{rema:BarHanke.fermi}
	When two metrics near $M$ differ by a tensor that has no $dt$ factors, they have identical Fermi coordinates $(x, t)$ relative the hypersurface $M$. Therefore, we will never need to specify whether we are computing Fermi coordinates, tubular neighborhoods, or distance-$t$ level surfaces with respect to $\bm{g}$ or $\tilde{\bm{g}}$.
\end{rema}

To prove Theorem \ref{theo:refined.gluing.single} we proceed in two steps:
\begin{itemize}
	\item Step 1: we initially deform $\bm{g}$ locally in a neighborhood of $M$ to be $C$-normal without changing the induced second fundamental form (or metric) on $M$.
	\item Step 2: we then deform this new $C$-normal metric to one whose second fundamental form on $M$ equals $k$.
\end{itemize}

\begin{prop}[Making a metric $C$-normal] \label{prop.BarHankedefor.step1}
	Assume the same setup as above. There exists $C_0 = C_0(\bm{g}, \eta)$ such that, for every $C \geq C_0$ and neighborhood $\bm{U}$ of $M$ there is a tubular neighborhood $\hat{\bm{U}} \subset \subset \bm{U} \cap \bm{U}_M$ of $M$ and a metric $\hat{\bm{g}}$ on $\bm{M}$ so that
	\begin{enumerate}
		\item $\hat{\bm{g}} \equiv \bm{g}$ on $\bm{M} \setminus \hat{\bm{U}}$ and $\Vert \hat{\bm{g}} - \bm{g} \Vert_{C^1(\hat{\bm{U}}, \bm{g})} \leq \eta$;
		\item $\hat{\bm{g}} \restr M \equiv \bm{g} \restr M$;
		\item $\hat{\sff}_0 \equiv \sff_0$ on $M$;
		\item $R_{\hat{\bm{g}}} - R_{\bm{g}} \geq - \eta$ on $\hat{\bm{U}}$;
		\item $\hat{\bm{g}}$ is $C$-normal and $\hat{\bm{g}} - \bm{g}$ has no $dt$ factors in Fermi coordinates on $\hat{\bm{U}}$.
	\end{enumerate}
	Above, $\hat{\sff}_0$ is the second fundamental form of the Fermi image of $M \times \{t\}$ in $(\bm{M}, \hat{\bm{g}})$ with respect to $-\partial_t$ as the outward pointing unit normal.
\end{prop}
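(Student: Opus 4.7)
The plan is to construct $\hat{\bm g}$ as an explicit interpolation, in Fermi coordinates $(x,t)$ near $M$, between the $C$-normal model $\tilde g_t^{(C)} := g_0 + tg_1 - Ct^2 g_0$ (with $g_1 := \dot g_0 = -2\sff_0$, which automatically guarantees (3)) and the original spatial metric family $g_t$. Since we modify only the spatial slices and not the $dt^2$ term, the ``no $dt$ factors'' clause of (5) and preservation of the Fermi chart come for free.

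\textbf{Ansatz.} Fix a smooth cutoff $\chi : [0,\infty) \to [0,1]$ with $\chi \equiv 1$ on $[0,1/2]$ and $\chi \equiv 0$ on $[1,\infty)$. For a small scale $\sigma > 0$ (chosen depending on $C$ and $\eta$), set $\hat{\bm U} := M \times [0,\sigma) \subset\subset \bm U \cap \bm U_M$ and define
\[ \hat g_t := \chi(t/\sigma)\,\tilde g_t^{(C)} + (1-\chi(t/\sigma))\, g_t, \qquad \hat{\bm g} := dt^2 + \hat g_t. \]
Equivalently, $\hat g_t - g_t = \chi(t/\sigma)\phi_C$ where $\phi_C(x,t) := -Ct^2 g_0 - (g_t - g_0 - tg_1) = O(Ct^2) + O(t^2)$. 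On the sub-tube $M \times [0,\sigma/2]$ the cutoff is identically $1$, so $\hat{\bm g}$ coincides with the $C$-normal model; on $\bm M \setminus \hat{\bm U}$ the cutoff vanishes, so $\hat{\bm g} \equiv \bm g$.

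\textbf{Easy properties.} Properties (2), (3), (5) follow immediately from $\phi_C(x,0) = 0$, $\partial_t\phi_C(x,0) = 0$, and the structural form of the ansatz. The $C^1$ closeness in (1) follows from the pointwise bounds $|\phi_C| = O(Ct^2)$, $|\partial_t \phi_C| = O(Ct)$, together with the extra $\sigma^{-1}$ factor contributed by differentiating $\chi(t/\sigma)$; one concludes $\|\hat g - g\|_{C^1(\hat{\bm U})} \lesssim C\sigma$, which is $\leq \eta$ upon choosing $\sigma \leq \sigma_0(\bm g, C, \eta)$.

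\textbf{The scalar curvature bound (4) -- the main obstacle.} Using Lemma \ref{lemm:slicing.formula.curv} with $u \equiv 1$, so that $R = R_{g_t} + |\dot g_t|^2_{g_t} - \tr_{g_t}\ddot g_t - H_t^2 - |\sff_t|^2$, one finds
\[ R_{\hat{\bm g}} - R_{\bm g} = -\tr_{g_t}\big(\ddot{\hat g}_t - \ddot g_t\big) + E(x,t), \]
where $E$ is controlled by $\|\hat g - g\|_{C^1}$ and hence absorbed by choosing $\sigma$ small. On $M \times [0,\sigma/2]$, $\ddot{\hat g}_t = -2Cg_0$ makes the leading term $2Cn + \tr_{g_0}\ddot g_0 + O(t)$, which is positive and large once $C \geq C_0(\bm g)$. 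The genuinely delicate region is the transition annulus $M \times [\sigma/2, \sigma]$, where $\ddot{\hat g}_t - \ddot g_t = \partial_t^2(\chi(t/\sigma)\phi_C)$ picks up $\chi''$-terms. Writing the leading $C$-dependent contribution via the identity $\partial_t^2(t^2 \chi(t/\sigma)) = 2\chi + 4(t/\sigma)\chi' + (t/\sigma)^2 \chi''$ and carrying Taylor expansions of $\phi_C$ to high enough order, the proof amounts to choosing the cutoff shape and scale $\sigma$ (for each fixed $C \geq C_0$) so that the drop in the transition is absorbed in the margin afforded by the positive ``$+2Cn$'' gain at smaller $t$ and by the $O(1)$ remainder terms being integrated against $t^2 \leq \sigma^2$. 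This is the technically hardest step: it is the standard difficulty in the B\"ar--Hanke flexibility construction and relies on the interplay ``first $C$ large, then $\sigma$ small'' and on the openness of the partial differential inequality $R_{\hat{\bm g}} \geq R_{\bm g} - \eta$.
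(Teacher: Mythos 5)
Your ansatz with a plain scaled cutoff $\chi(t/\sigma)$ cannot work, and the mechanism you propose for absorbing the scalar-curvature drop in the transition annulus is incorrect.

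Here is the obstruction. Your modification is $\hat g_t - g_t = \chi(t/\sigma)\phi_C$ with the leading ($C$-dependent) piece of $\phi_C$ equal to $-Ct^2g_0$. The dominant contribution to $R_{\hat{\bm g}} - R_{\bm g}$ is $-\tr_{g_t}\partial_t^2(\chi(t/\sigma)\phi_C)$, and the leading term there is
\[
  C\, (\tr_{g_t}g_0)\, \partial_t^2\!\bigl(t^2\chi(t/\sigma)\bigr)
  = C\, (\tr_{g_t}g_0) \bigl[\,2\chi(u) + 4u\chi'(u) + u^2\chi''(u)\,\bigr]\Big|_{u=t/\sigma}.
\]
Write $f(u) := (u^2\chi(u))'' = 2\chi + 4u\chi' + u^2\chi''$. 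Since $\chi\equiv 1$ near $u=1/2$ and $\chi\equiv 0$ near $u=1$, one has $(u^2\chi)'|_{1/2}=1$ and $(u^2\chi)'|_{1}=0$, so $\int_{1/2}^1 f\,du = -1$ for \emph{every} admissible cutoff. Hence $\min_{[1/2,1]} f \leq -2$ regardless of the shape of $\chi$, and the scale $\sigma$ drops out completely of this quantity. At the point where the minimum is attained you lose scalar curvature of order $2Cn$, which blows up as $C\to\infty$; this is a pointwise violation of conclusion (4). Your suggestion that the $+2Cn$ gain ``at smaller $t$'' absorbs the drop confuses a pointwise inequality with an integral one: the scalar curvature bound must hold at each $(x,t)$ separately, and a surplus at $t/\sigma<1/2$ does nothing for a deficit at $t/\sigma\approx 0.8$.

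What is actually needed, and what the paper does, is a cutoff whose $l$-th derivative decays like $t^{-l}$ with an \emph{extra small factor}. The paper uses the logarithmic cutoff $\tau_{\delta,\eps}$ of Lemma~\ref{lemma.logcutoff}, satisfying $|\tau_{\delta,\eps}^{(l)}(t)| \leq C_l\, t^{-l}\, |\log\delta|^{-1}$. This is not of the form $\chi(t/\sigma)$: the crucial property is that $t^l|\tau^{(l)}(t)|\lesssim |\log\delta|^{-1}$, so the dangerous combination $\tau''(t)\,t^2 + 4\tau'(t)\,t + 2\tau(t)$ differs from $2\tau(t)$ by $O(|\log\delta|^{-1})$, which can be made $\leq\eta/(2Cn)$ by choosing $\delta$ small \emph{after} fixing $C$. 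The paper packages this efficiently by introducing the $s$-parameter family ${}^{(s)}\bm g := \bm g - s\bigl(\tfrac12 t^2(\ddot g_0 + 2Cg_0) + R_t\bigr)$, showing $R_{{}^{(s)}\bm g} > R_{\bm g}(x,0) - \tfrac12\eta$ uniformly in $s$ on a small tube (here is where $C\geq C_0$ and compactness enter), and then proving that $\hat{\bm g}(x,t)$ with cutoff coefficient $s=\tau_{\delta,\eps}(t)$ is $C^2$-close at $(x,t)$ to ${}^{(s)}\bm g(x,t)$ up to $O(|\log\delta|^{-1})$. The ingredient your sketch is missing is precisely the log-cutoff, without which there is no route to conclusion (4).
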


The deformation in this step is achieved by interpolation using a special cutoff function.

\begin{lemm}\label{lemma.logcutoff}
	For any $\delta \in (0,\tfrac14)$, $\eps\in (0,1)$, there exists a $C^\infty$ function $\tau_{\delta,\eps}:\RR\to \RR$ such that:
	\begin{enumerate}
		\item $\tau_{\delta,\eps}=1$ when $t\le \delta\eps$.
		\item $\tau_{\delta,\eps}=0$ when $t\ge \eps$.
		\item $0\le\tau_{\delta,\eps}\le 1$ for $t\in \RR$.
		\item for every positive integer $l$, there is a constant $C_l>0$ such that for all $t>0$:
		\[\left| \tau_{\delta,\eps}^{(l)}(t) \right|\le C_l \cdot t^{-l} \cdot |\log \delta|^{-1}.\]
	\end{enumerate}
\end{lemm}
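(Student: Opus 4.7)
\medskip

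\noindent\textbf{Proof proposal.} The plan is to build $\tau_{\delta,\eps}$ by composing a fixed smooth template cutoff with a logarithmic reparametrization; this is precisely the device that produces the $|\log\delta|^{-1}$ gain in the derivative estimate.

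First, I fix once and for all a smooth function $\phi:\RR\to[0,1]$ with $\phi\equiv 0$ on $(-\infty,0]$ and $\phi\equiv 1$ on $[1,\infty)$ (so $\phi^{(l)}$ is uniformly bounded by some constant $B_l$ for each $l$). Then, for $t>0$, I define
\[
\tau_{\delta,\eps}(t) := \phi\!\left(\frac{\log(t/\eps)}{\log\delta}\right),
\]
and extend by $\tau_{\delta,\eps}(t)\equiv 1$ for $t\leq \delta\eps$. Since $\log\delta<0$, the argument $u(t):=(\log t-\log\eps)/\log\delta$ equals $1$ when $t=\delta\eps$, equals $0$ when $t=\eps$, and is monotone decreasing in $t$ on $(0,\infty)$. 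Thus $\tau_{\delta,\eps}\equiv 1$ for $t\leq \delta\eps$ (where $u\geq 1$), $\tau_{\delta,\eps}\equiv 0$ for $t\geq\eps$ (where $u\leq 0$), and $0\leq\tau_{\delta,\eps}\leq 1$ everywhere, verifying properties (1)--(3). Smoothness across $t=\delta\eps$ is automatic because $\phi\equiv 1$ on a full neighborhood of $u=1$.

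For (4), the key computation is the chain rule. Since
\[
u^{(j)}(t)=\frac{(-1)^{j-1}(j-1)!}{t^{j}\log\delta} \quad\text{for } j\geq 1, \quad t>0,
\]
applying Faà di Bruno's formula gives, for each $l\geq 1$ and $t>0$,
\[
\tau_{\delta,\eps}^{(l)}(t)=\sum_{\pi\in\mathcal{P}(l)} \phi^{(|\pi|)}(u(t))\prod_{B\in\pi}u^{(|B|)}(t),
\]
where $\mathcal{P}(l)$ is the set of partitions of $\{1,\dots,l\}$. Each block of size $|B|$ contributes a factor of order $t^{-|B|}|\log\delta|^{-1}$, so the total contribution of a partition $\pi$ is bounded by a constant times $t^{-l}|\log\delta|^{-|\pi|}$. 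Summing and using the hypothesis $\delta<\tfrac{1}{4}$ (which forces $|\log\delta|>\log 4>1$, so $|\log\delta|^{-|\pi|}\leq|\log\delta|^{-1}$ for all $|\pi|\geq 1$) yields
\[
|\tau_{\delta,\eps}^{(l)}(t)|\leq C_l\, t^{-l}|\log\delta|^{-1},
\]
with $C_l$ depending only on $l$ and on the fixed template $\phi$. This is (4).

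The construction has no serious obstacle: the logarithmic rescaling is forced by the requirement that the transition happen across a \emph{ratio} (from $\delta\eps$ to $\eps$), and the $|\log\delta|^{-1}$ gain is exactly the length (in logarithmic variable) over which $\phi$ must vary. The only care needed is in sorting out constants in the Faà di Bruno sum, but since every block contributes at least one factor of $|\log\delta|^{-1}$ and no block contributes anything worse than $t^{-|B|}|\log\delta|^{-1}$, the worst partition is the trivial one and the stated estimate is sharp in its $t$-weight.
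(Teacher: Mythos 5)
Your construction is correct and is exactly the canonical one: reparametrize by the logarithm so that the transition interval $[\delta\eps,\eps]$ becomes the fixed interval $[0,1]$ in the variable $u=\log(t/\eps)/\log\delta$, then compose with a universal template $\phi$. The Faà di Bruno bookkeeping is right (each block of size $|B|$ contributes $t^{-|B|}|\log\delta|^{-1}$, so every partition yields $t^{-l}|\log\delta|^{-|\pi|}$, dominated by $t^{-l}|\log\delta|^{-1}$ since $|\log\delta|>\log 4>1$), and the extension to $t\le 0$ by the constant $1$ is smooth because $\phi\circ u\equiv 1$ already on $(0,\delta\eps]$. The paper itself does not reprove the lemma but simply cites Appendix A of B\"ar--Hanke, where the same log-reparametrization idea underlies the construction, so your argument is a faithful standalone proof along the same lines.
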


For the proof, see \cite[Appendix A]{BarHanke2021flexibility}. 

\begin{proof}[Proof of Proposition \ref{prop.BarHankedefor.step1}]
	Write $\bm{g}(t,x)=dt^2+g_t(x)$ as in \eqref{eq.metric.BarHanke}. Consider the Taylor expansion of the tensors $g_t(x)$ in terms of $t$:
	\[g_t(x) = g_0(x) +  t \dot g_0(x) + \tfrac12 t^2 \ddot g_0(x) + R_t(x).\]
	By Taylor's theorem,
	\begin{equation} \label{eq.BarHankedefor.step1.Rt}
		\Vert R_t \Vert_{C^2(M, g_0)} + t \Vert \dot R_t \Vert_{C^1(M, g_0)} + t^2 \Vert \ddot R_t \Vert_{C^0(M, g_0)} = o(t^2).
	\end{equation}
	For $C>0$ to be chosen later and $s\in [0,1]$ consider the auxiliary metric
	\[ {}^{(s)}\bm{g}(x,t) := \bm{g}(x,t)  - s\left(\tfrac12 t^2(\ddot g_0(x) + 2Cg_0(x)) + R_t(x)\right). \]
	This metric is \textit{not} the $\hat{\bm{g}}$ we will ultimately take, but will serve as a convenient comparison metric when we eventually define $\hat{\bm{g}}$. Note that, by construction, ${}^{(s)}\bm{g}$ and $\bm{g}$ has the same first order terms in the Taylor expansion in $t$. Thus, by Lemma \ref{lemm:slicing.formula.curv},  we have:
	\[R_{{}^{(s)}\bm{g}}(x,0) = R_{\bm{g}} (x,0) + s (\tr_{g_0(x)} \ddot g_0(x) + 2(n-1)C) \ge R_{\bm{g}}(x,0) ,\]
	provided
	\[ C \geq C_0 := \tfrac{1}{2(n-1)} \max_M (- \tr_{g_0} \ddot g_0). \]
	Fix such a $C$. By compactness, there exists $\eps_0 \in (0, \eps_M)$ such that
	\begin{equation} \label{eq.BarHankedefor.step1.Rs}
		R_{{}^{(s)}\bm{g}}(x,t) > R_{\bm{g}}(x,0) - \tfrac12 \eta \text{ for all } s \in [0,1], \; t \in [0,\eps_0]
	\end{equation}
	For $\eps\in [0,\eps_0]$ small enough for the Fermi image $\bm{U}_\eps$ of $M \times [0,\eps)$ to be contained in $\bm{U}$, and $\delta>0$ to be chosen later, consider the metric 
	\[ \hat{\bm{g}}(x,t) := \bm{g}(x,t) - \tau_{\delta,\eps}(t)(\tfrac12 t^2 (\ddot g_0(x)+ 2 C g_0(x)) + R_t(x)),\]
	where $\tau_{\delta,\eps}(t)$ is the function constructed in Lemma \ref{lemma.logcutoff}. We claim $\hat{\bm{g}}$ has all the desired properties as long as we choose $\delta>0$ small enough. 
	
	Properties (1.a), (2), (3), (6) are true by construction. It remains to check properties (1.b) and (4). Lemma \ref{lemma.logcutoff} and \eqref{eq.BarHankedefor.step1.Rt} imply:
	\begin{align*}
		& \Vert \hat{\bm{g}} - \bm{g} \Vert_{C^1(\bm{U}, \bm{g})} \\
		& \qquad = \Vert \tau_{\delta,\eps}(t)(\tfrac12 t^2 (\ddot g_0(x)+ 2 C g_0(x)) + R_t(x)) \Vert_{C^1(\bm{U}_\eps, \bm{g})} \\
		& \qquad \leq C_1 |\log \delta|^{-1} \Vert \tfrac12 t (\ddot g_0(x)+ 2 C g_0(x)) + t^{-1} R_t(x) \Vert_{C^0(\bm{U}_\eps, \bm{g})} \\
		& \qquad \qquad + \Vert \tfrac12 t^2 (\ddot g_0(x)+ 2 C g_0(x)) + R_t(x) \Vert_{C^1(\bm{U}_\eps, \bm{g})} \leq \eta,
	\end{align*}
	as long as $\eps > 0$ is sufficiently small depending on $\bm{g}$, $C$, $C_1$, $\eta$. Thus (1.b) holds.
	
	We finally check property (4). This is where we rely on the auxiliary family of metrics, ${}^{(s)}\bm{g}$. Freeze a point in $\bm{U}_\eps$, with Fermi coordinates $(x, t)$, and set $s :=\tau_{\delta,\eps}(t)$. We estimate the $C^2$ norm of $\hat{\bm{g}} - {}^{(s)} \bm{g}$ at $(x, t)$ via Lemma \ref{lemma.logcutoff} and \eqref{eq.BarHankedefor.step1.Rt}:
	\begin{align*}
		& \Vert \hat{\bm{g}} - {}^{(s)} \bm{g} \Vert_{C^2((x,t),\bm{g})} \\
		& \qquad \leq \Vert (s - \tau_{\delta,\eps}(t))(\tfrac12 t^2 (\ddot g_0(x)+ 2 C g_0(x)) + R_t(x)) \Vert_{C^2((x,t),\bm{g})} \\
		& \qquad \leq C_2 |\log \delta|^{-1} \Vert \tfrac12 (\ddot g_0(x)+ 2 C g_0(x)) + t^{-2} R_t(x) \Vert_{C^0((x,t),\bm{g})} \\
		& \qquad \qquad + C_1 |\log \delta|^{-1} \Vert \tfrac12 t (\ddot g_0(x)+ 2 C g_0(x)) + t^{-1} R_t(x) \Vert_{C^1((x,t),\bm{g})}.
	\end{align*}
	Note that the right hand side $\to 0$ as $\delta \to 0$, depending on $\bm{g}$, $C$, $C_1$, $C_2$. In particular, we may choose $\delta > 0$ sufficiently small so that
	\[ |R_{\hat{\bm{g}}}(x,t) - R_{{}^{(s)}\bm{g}}(x,t)| \leq \tfrac12 \eta. \]
	Property (4) then follows from \eqref{eq.BarHankedefor.step1.Rs}.
\end{proof}

Let $(\bm{M}, \hat{\bm{g}})$ be the metric obtained in Proposition \ref{prop.BarHankedefor.step1} applied with $\eps$ and $\eta$ small enough for property (5) to imply, for the mean curvature scalar $\hat{H}_t$ of the Fermi image of $M \times \{t\}$ in $(\bm{M}, \hat{\bm{g}})$ with respect to $-\partial_t$ as the outward pointing unit normal:
\begin{equation} \label{eq.BarHankedefor.step2.H}
	\hat{H}_t(x) \geq H_0(x) - \eta.
\end{equation}
We may shrink $\hat{\bm{U}}$ to be the Fermi image of $M \times [0,\eps_1)$ in $\bm{M}$, where $\eps_1$ is an arbitrary constant in $(0,\delta \eps)$ that will be fixed later. Note that:
\[ \hat{\bm{g}} (x,t)= dt^2 + g_0(x) - 2t \sff_0(x) - Ct^2 g_0(x),\]
where $\sff_0$ was the second fundamental form of $M = \partial \bm{M} \subset (\bm{M}, \bm{g})$ with respect to $-\partial_t$ as an outward pointing unit normal. 

\begin{prop}[Interpolation in $C$-normal metrics] \label{prop.BarHankedeform.step2}
	Assume the setup above. There exists $C_0 = C_0(g_0, \sff_0, k)$ such that, for every $C \geq C_0$ and   neighborhood $\bm{U}' \subset \hat{\bm{U}}$ of $M$, there is a tubular neighborhood $\tilde{\bm{U}} \subset \subset \bm{U}'$ of $M$ and a metric $\tilde{\bm{g}}$ on $\bm{M}$ satisfying:
	\begin{enumerate}
		\item $\tilde{\bm{g}} \equiv \hat{\bm{g}}$ on $\bm{M} \setminus \tilde{\bm{U}}$ and $\Vert \tilde{\bm{g}} - \hat{\bm{g}} \Vert_{C^0(\tilde{\bm{U}}, \bm{g})} \leq \eta$;
		\item $\tilde{\bm{g}} \restr M \equiv \hat{\bm{g}} \restr M$;
		\item $\tilde{\sff}_0 \equiv k$ on $M$;
		\item $R_{\tilde{\bm{g}}} \geq R_{\hat{\bm{g}}} -\eta$ on $\tilde{\bm{U}}$;
		\item $\tilde{H}_t \geq \tr_{g_0} k - \eta$ on $\tilde{\bm{U}}$;
		\item $\tilde{\bm{g}}$ is $C$-normal and $\tilde{\bm{g}} - \hat{\bm{g}}$ has no $dt$ factors on $\tilde{\bm{U}}$;
	\end{enumerate}
	Above, $\tilde{\sff}_t$, $\tilde{H}_t$ are the second fundamental form and mean curvature scalar of the Fermi image of $M \times \{t\}$ in $(\bm{M}, \tilde{\bm{g}})$ with respect to $-\partial_t$ as the outward pointing normal.
\end{prop}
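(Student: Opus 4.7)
The plan is to mirror the construction of Proposition~\ref{prop.BarHankedefor.step1}, now using the starting $C$-normal metric $\hat{\bm{g}}$ as the ``outer'' metric and interpolating toward a target $C$-normal metric whose induced second fundamental form at $M$ equals $k$ rather than $\sff_0$. Concretely, choose parameters $\delta \in (0,\tfrac14)$ and $\eps \in (0,\eps_1)$ (with $\eps$ small and $\delta$ much smaller), and on the Fermi image of $M \times [0,\eps)$ set
\[
\tilde{g}_t(x) := g_0(x) - 2t\bigl[(1-\tau_{\delta,\eps}(t))\sff_0(x) + \tau_{\delta,\eps}(t) k(x)\bigr] - Ct^2 g_0(x),
\]
declaring $\tilde{\bm{g}} := dt^2 + \tilde{g}_t$ there and $\tilde{\bm{g}} := \hat{\bm{g}}$ elsewhere, with $\tilde{\bm{U}}$ the Fermi image of $M \times [0,\eps_2)$ for some $\eps_2 \in (\eps,\eps_1)$. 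Conclusions (1), (2), (3), (6) then drop out of this formula: (6) holds because $\tau_{\delta,\eps} \equiv 1$ on $[0,\delta\eps]$ reduces $\tilde{g}_t$ to the strictly $C$-normal form $g_0 - 2tk - Ct^2 g_0$, and $\tilde{\bm{g}} - \hat{\bm{g}} = 2t\tau_{\delta,\eps}(\sff_0-k)$ manifestly has no $dt$-factors; (1) holds because $\tau_{\delta,\eps} \equiv 0$ on $[\eps,\eps_2)$ gives $\tilde{\bm{g}} \equiv \hat{\bm{g}}$ there, with $\Vert\tilde{\bm{g}}-\hat{\bm{g}}\Vert_{C^0} = O(\eps)$; (2) and (3) follow from $\tilde{g}_0 = g_0$ and $\dot{\tilde{g}}_t|_{t=0} = -2k$ (using $\tau_{\delta,\eps}(0)=1$, $\dot\tau_{\delta,\eps}(0)=0$), giving $\tilde\sff_0 = k$.

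For (4), introduce the auxiliary family of $C$-normal metrics
\[
{}^{(s)}\bm{g}(x,t) := dt^2 + g_0(x) - 2t\bigl[(1-s)\sff_0(x) + sk(x)\bigr] - Ct^2 g_0(x), \quad s \in [0,1],
\]
and run the two-step scheme of Proposition~\ref{prop.BarHankedefor.step1}. In Step~A, Lemma~\ref{lemm:slicing.formula.curv} expresses $R_{{}^{(s)}\bm{g}}(x,0)$ so that $\ddot{g}^{(s)}_0 = -2Cg_0$ yields a large positive contribution through the $\partial_t H^{(s)}$ term, dominating all $s$-dependent quantities built from $g_0$, $\sff_0$, $k$ as long as $C$ is chosen large enough depending only on this boundary data. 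Continuity in $t$ then gives $R_{{}^{(s)}\bm{g}}(x,t) \ge R_{\hat{\bm{g}}}(x,0) - \eta/3$ uniformly in $(s,t) \in [0,1]\times[0,\eps]$ after $\eps$ is shrunk. In Step~B, fix $(x,t)$ and take $s := \tau_{\delta,\eps}(t)$; then $\tilde{\bm{g}} - {}^{(s)}\bm{g}$ vanishes pointwise at $(x,t)$, and its $C^2$-norm is controlled solely by $t$-derivatives of $\tau_{\delta,\eps}$ times bounded functions of $\sff_0$, $k$, $g_0$. Lemma~\ref{lemma.logcutoff}(4) bounds these by $|\log \delta|^{-1}$, so shrinking $\delta$ gives $|R_{\tilde{\bm{g}}} - R_{{}^{(s)}\bm{g}}| \le \eta/3$ and hence (4).

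For (5), which is not explicitly treated by B\"ar--Hanke in this form, the plan is a direct computation. From $\dot{\tilde{g}}_t = -2[(1-\tau_{\delta,\eps})\sff_0 + \tau_{\delta,\eps} k] + 2t\dot\tau_{\delta,\eps}(\sff_0 - k) - 2Ct g_0$ together with the formulas $\tilde\sff_t = -\tfrac{1}{2}\dot{\tilde g}_t$ and $\tilde H_t = \tr_{\tilde g_t}\tilde\sff_t$, one gets
\[
\tilde{H}_t = \tr_{\tilde g_t}\bigl[(1-\tau_{\delta,\eps})\sff_0 + \tau_{\delta,\eps} k\bigr] - t\dot\tau_{\delta,\eps}\tr_{\tilde g_t}(\sff_0 - k) + Ct\tr_{\tilde g_t}g_0.
\]
The hypothesis $\tr_{g_0}k \le H_0 = \tr_{g_0}\sff_0$ makes the convex combination in the first summand $\ge \tr_{g_0}k$ modulo an $O(t)$ error from $\tilde g_t - g_0$; the second summand is $O(|\log\delta|^{-1})$ by Lemma~\ref{lemma.logcutoff}(4); and the third is $O(C\eps)$. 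Fixing $C$, then $\eps$, then $\delta$ delivers $\tilde{H}_t \ge \tr_{g_0}k - \eta$ on the Fermi image of $M \times [0,\eps]$, while for $t \in [\eps, \eps_2)$ the identity $\tilde{\bm{g}} \equiv \hat{\bm{g}}$ together with \eqref{eq.BarHankedefor.step2.H} and $\tr_{g_0}k \le H_0$ closes the bound. The main obstacle is precisely the hierarchical ordering of the three parameters: $C$ must be chosen first (large, depending on $g_0$, $\sff_0$, $k$) to make Step~A work, then $\eps$ small (so that the $C$-normal-family curvature quantities remain close to their boundary values), and finally $\delta$ very small (to annihilate all cutoff-derivative errors through Lemma~\ref{lemma.logcutoff}(4)). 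The necessity of the particular logarithmic cutoff rather than a generic smooth bump at this last step is inherited directly from the B\"ar--Hanke machinery.
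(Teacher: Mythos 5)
Your proposal replaces B\"ar--Hanke's cutoff $\chi$ from Lemma~\ref{lemm.chi.cutoff} by $t\,\tau_{\delta,\eps}(t)$ built from the logarithmic cutoff of Lemma~\ref{lemma.logcutoff}, and then attempts to rerun the Step~A/Step~B comparison scheme of Proposition~\ref{prop.BarHankedefor.step1}. This does not work, for two separate reasons, both localized in the scalar curvature estimate (4).

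\textbf{Step~B fails.} In Proposition~\ref{prop.BarHankedefor.step1}, the interpolated quantity is $Q(t)=\tfrac12 t^2(\ddot g_0 + 2Cg_0) + R_t = O(t^2)$, so two $t$-derivatives of $\tau_{\delta,\eps}(t) Q(t)$ (evaluated at frozen $s=\tau_{\delta,\eps}(t)$) produce $\ddot\tau Q + 2\dot\tau Q' = O(t^{-2}|\log\delta|^{-1})O(t^2) + O(t^{-1}|\log\delta|^{-1})O(t) = O(|\log\delta|^{-1})$, which shrinks with $\delta$. In your construction the interpolated tensor is $\tilde g_t - {}^{(s)}g_t = 2t\bigl(\tau_{\delta,\eps}(t)-s\bigr)(\sff_0-k)$, which only vanishes to \emph{first} order in $t$. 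At $s=\tau_{\delta,\eps}(t)$ one gets
\[
\partial_t^2\bigl(\tilde g_t - {}^{(s)}g_t\bigr)\big|_{s=\tau_{\delta,\eps}(t)} = \bigl(4\dot\tau_{\delta,\eps}(t) + 2t\,\ddot\tau_{\delta,\eps}(t)\bigr)(\sff_0-k),
\]
and Lemma~\ref{lemma.logcutoff}(4) only bounds this by $O\bigl(t^{-1}|\log\delta|^{-1}\bigr)\,|\sff_0-k|$. On the transition region $t\in[\delta\eps,\eps]$ this is as large as $(\delta\eps)^{-1}|\log\delta|^{-1}$, which \emph{diverges} as $\delta\to 0$ (since $\delta|\log\delta|\to 0$). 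So the claim that ``shrinking $\delta$ gives $|R_{\tilde{\bm{g}}} - R_{{}^{(s)}\bm{g}}|\le\eta/3$'' is false: Lemma~\ref{lemma.logcutoff} is calibrated to interpolate a $O(t^2)$ datum, not a $O(t)$ one.

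\textbf{Step~A does not close the gap either.} The metric $\hat{\bm{g}}$ is itself $C$-normal (it equals ${}^{(0)}\bm{g}$ in your family), so the large positive contribution $2C\dim M$ coming from $\ddot g^{(s)}_0 = -2C g_0$ enters $R_{{}^{(s)}\bm{g}}(x,0)$ and $R_{\hat{\bm{g}}}(x,0)$ identically and cancels in the difference. What remains, $R_{{}^{(s)}\bm{g}}(x,0) - R_{\hat{\bm{g}}}(x,0) = 3\bigl(|\sff^{(s)}|^2-|\sff_0|^2\bigr) - \bigl((H^{(s)})^2 - H_0^2\bigr)$ with $\sff^{(s)}=(1-s)\sff_0+sk$, is $C$-independent and has no controlled sign under the hypothesis $\tr_{g_0}k\le H_0$. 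Taking $C$ large does not make it $\ge -\eta/3$, and shrinking $\eps$ only controls the $t$-continuity, not this boundary quantity.

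These failures are precisely what the B\"ar--Hanke cutoff $\chi$ of Lemma~\ref{lemm.chi.cutoff} is designed to avoid. Because $\chi$ is concave with $-2/\eps_1 \le \chi'' \le 0$, the only dangerous second-derivative term in $R_{\tilde{\bm{g}}}$, namely $-2\chi''(t)\,\tr_{\tilde g_t}(\sff_0-k)$, has a \emph{favorable} sign once combined with the hypothesis $\tr_{g_0}k \le H_0$ (so $\tr(\sff_0-k)\ge 0$): it contributes nonnegatively, and no smallness of the cutoff derivative is required. The comparison-family trick and the $|\log\delta|^{-1}$ bookkeeping are inappropriate here; the sign structure (concavity plus mean-curvature monotonicity) is the whole point. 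Your computation for property (5) correctly identifies the role of $\tr_{g_0}k \le H_0$ and the $\chi'\le 1$ bound, and is essentially the paper's addition; you would need to port it to the $\chi$ cutoff rather than $t\tau_{\delta,\eps}$.
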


Note that these propositions imply our theorems:

\begin{proof}[Proof of Theorem \ref{theo:refined.gluing.single}]
	Follows from Propositions \ref{prop.BarHankedefor.step1}, \ref{prop.BarHankedeform.step2} as explained.
\end{proof}

Proposition \ref{prop.BarHankedeform.step2} essentially follows from the same proof of \cite[Proposition 26]{BarHanke2021boundary}. We first take the same cutoff function as in \cite[Lemma 25]{BarHanke2021boundary}. 

\begin{lemm} \label{lemm.chi.cutoff}
	There exists a constant $c_0>0$ such that for each $\eps_1\in (0,\tfrac12)$, there exists a smooth function $\chi: [0,\infty)\to \RR$ such that:
	\begin{enumerate}
		\item $\chi(t)=t$ for $t$ near $0$, $\chi(t)=0$ for $t\ge \sqrt{\eps_1}$, and $0\le \chi(t)\le \tfrac12 \eps_1$ for all $t$.
		\item $|\chi'(t)|\le c_0$ and $\chi'(t)\le 1$ for all $t$.
		\item $-\tfrac{2}{\eps_1}\le \chi''(t)\le 0$ for all $t\in [0,\eps_1]$ and $|\chi''(t)|\le c_0$ for all $t\in [\sqrt{\eps_1},\eps_1]$.
	\end{enumerate}
\end{lemm}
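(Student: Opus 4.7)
The plan is to construct $\chi$ explicitly as a three-piece piecewise-smooth function, with the pieces dictated by the natural breakpoints at $t=0$, $t=\eps_1$, and $t=\sqrt{\eps_1}$, and then verify the six listed properties by direct calculation. The two different scalings in the $\chi''$ bound (property (3) vs. (4)) correspond to the two different interval lengths $\eps_1$ and $\sqrt{\eps_1}-\eps_1\sim\sqrt{\eps_1}$, so each piece will be a rescaling of a fixed universal profile at the appropriate scale.

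For the inner piece, I would fix, once and for all, a smooth nonincreasing profile $\phi:\RR\to[0,1]$ with $\phi\equiv 1$ on $(-\infty,\alpha]$ and $\phi\equiv 0$ on $[\beta,\infty)$, where $0<\alpha<\beta<1$ are chosen (together with $\phi$) so that $|\phi'|\leq 2$ on $\RR$ and $\int_0^1\phi(s)\,ds<\tfrac12$. Define $\chi'(t):=\phi(t/\eps_1)$ on $[0,\eps_1]$ and $\chi(t):=\int_0^t\chi'(s)\,ds$. Then $\chi(t)=t$ on $[0,\alpha\eps_1]$, establishing property (1) near the origin; $\chi''(t)=\phi'(t/\eps_1)/\eps_1$ lies in $[-2/\eps_1,0]$, establishing property (3); and the peak at $t=\eps_1$ satisfies $\chi(\eps_1)=\eps_1\int_0^1\phi<\eps_1/2$, giving the tight part of property (1).

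For the outer piece, I would decay $\chi$ from its peak $\chi(\eps_1)$ down to $0$ using a fixed smooth $\zeta:[0,1]\to[0,1]$ with $\zeta(0)=1$, $\zeta'(0)=0$, and $\zeta\equiv 0$ near $s=1$; namely, set $\chi(t):=\chi(\eps_1)\cdot\zeta\!\left((t-\eps_1)/(\sqrt{\eps_1}-\eps_1)\right)$ on $[\eps_1,\sqrt{\eps_1}]$ and extend by $0$ beyond $\sqrt{\eps_1}$. Since $\chi(\eps_1)=O(\eps_1)$ and the interval has length $O(\sqrt{\eps_1})$, the first derivative on this interval is of order $\sqrt{\eps_1}\leq 1\leq c_0$ and the second derivative is of order $1\leq c_0$, where $c_0$ depends only on $\phi$ and $\zeta$. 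Smoothness across the gluing point $t=\eps_1$ follows from $\chi'(\eps_1)=0$ (built into $\phi$) matching $\zeta'(0)=0$, after which properties (2) and (4) follow immediately.

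The main obstacle is the tight compatibility between the pointwise bound $\chi''\geq -2/\eps_1$ and the peak bound $\chi\leq\eps_1/2$: since $\chi$ starts with slope $1$ and must decelerate with $\chi''\geq -2/\eps_1$, the slope cannot reach $0$ before $t=\eps_1/2$, which already forces a rise of $\eps_1/4$ by integration and leaves essentially no slack. The single-profile construction above sits exactly at this boundary of feasibility, which is why the constraints $|\phi'|\leq 2$ and $\int_0^1\phi<\tfrac12$ must be imposed \emph{simultaneously} on $\phi$; once such a $\phi$ is produced (e.g.\ any smoothing of a piecewise-linear profile with $\alpha$ close to $0$ and slope approximately $-1$ across $[\alpha,\beta]$), the rest of the verification is straightforward.
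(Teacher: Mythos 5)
Your proof takes a genuinely different route from the paper's. The paper's proof is essentially a citation: it invokes \cite[Lemma 25]{BarHanke2021boundary} for properties (1), (3), and the $|\chi'|\le c_0$ part of (2), and then remarks that the additional bound $\chi'\le 1$ can be read off from the internal structure of that reference's construction ($\chi$ decomposed as a sum of a piece with derivative $\le 1$ and a monotone piece). You instead build $\chi$ from scratch out of two rescaled universal profiles $\phi$ and $\zeta$, one governing the inner interval $[0,\eps_1]$ and one governing the outer interval $[\eps_1,\sqrt{\eps_1}]$. This trades a short external appeal for a self-contained argument and makes the $\eps_1$-dependence of each bound transparent; in particular, it isolates the delicate simultaneous feasibility of $\chi''\ge -2/\eps_1$ and $\chi\le\tfrac12\eps_1$ into a single fixed-scale constraint on $\phi$, namely $\phi'\ge -2$ together with $\int_0^1\phi<\tfrac12$, which you correctly note is achievable: a piecewise-linear $\phi$ with plateau $[0,\alpha]$ and slope $-2$ afterward has $\int_0^1\phi=\alpha+\tfrac14$, leaving room to smooth while preserving both constraints once $\alpha<\tfrac14$.

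One small imprecision to fix. You justify smoothness of the concatenation at $t=\eps_1$ by the $C^1$-matching $\chi'(\eps_1^-)=\phi(1)=0=\zeta'(0)$, but $C^1$-agreement at one point does not give a $C^\infty$ gluing. The clean remedy is to insist additionally that $\zeta\equiv 1$ on a right-neighborhood of $s=0$ (to go with $\zeta\equiv 0$ near $s=1$) and to take $\zeta$ nonincreasing. Since $\phi\equiv 0$ on $[\beta,1]$ with $\beta<1$ already forces the inner piece to be constant equal to $\chi(\eps_1)$ on $[\beta\eps_1,\eps_1]$, the two pieces then literally coincide on a two-sided neighborhood of $t=\eps_1$ and the concatenation is automatically smooth. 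Taking $\zeta$ nonincreasing also gives $\chi'\le 0\le 1$ on the outer piece outright, rather than via the estimate $|\chi'|\lesssim\sqrt{\eps_1}\|\zeta'\|_\infty$, which would need a quantitative hypothesis on $\zeta$ to close uniformly for $\eps_1$ near $\tfrac12$. With these adjustments the argument is complete and correct.
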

\begin{proof}
	Everything except the property $\chi'\le 1$ was already proven in \cite[Lemma 25]{BarHanke2021boundary}. The fact that $\chi' \leq 1$ can be arranged follows from the construction of $\chi$ as $\varphi_\delta + \psi_\delta$ for smooth functions satisfying $\varphi_\delta' \leq 1$ and $\psi_\delta$ being able to be taken non-decreasing, so $\psi_\delta' \leq 0$.
\end{proof}

\begin{proof}[Proof of Proposition \ref{prop.BarHankedeform.step2}]
	Consider the function $\chi(t)$ constructed in Lemma \ref{lemm.chi.cutoff}. We define the metric $\tilde{\bm{g}}$ so that
	\[\tilde{\bm{g}}(x,t) := dt^2 + g_0(x) - 2t\sff_0(x) + 2\chi(t) (\sff_0(x)-k(x)) - Ct^2 g_0(x) \]
	when $t\le \sqrt{\eps_1}$, and
	\[ \tilde{\bm{g}}(x,t) := \hat{\bm{g}}(x,t) \]
	when $t\ge \sqrt{\eps_1}$. The proof in \cite[Proposition 26]{BarHanke2021boundary} verifies properties (1), (2), (3), (6) provided $C>C_0(g_0, \sff_0, k)$ and $\eps_1 < \min\{\tfrac12, \delta\eps, C^{-2}\}$. Property (4) is verified as well in the course of the proof, except the authors' statement doesn't reflect it. It remains to verify property, (5). Write $\tilde{\bm{g}}(x,t) = dt^2 + \tilde{g}_t$ as in \eqref{eq.metric.BarHanke}. Then
	\begin{equation} \label{eq.BarHankedeform.step2.H}
		\tilde{g}_0=g_0, \; \tilde{H}_t=-\tfrac12 \tr_{\tilde{g}_t} \dot{\tilde{g}}_t
	\end{equation}
	by \eqref{eq.H.slicing.BarHanke}. By \cite[Lemma 24]{BarHanke2021boundary}, as long as $\eps_1 < C^{-2}$, we have
	\begin{equation} \label{eq.BarHankedeform.step2.trg}
		\Vert \dot{\tilde{g}}_t \Vert_{g_0} \lesssim 1 \text{ and thus } \left|\tr_{\tilde{g}_t}(\dot{\tilde{g}}_t)- \tr_{g_0}(\dot{\tilde{g}}_t)\right|\le 2\sqrt{\eps_1}\|\dot{\tilde{g}}_t\|_{g_0} \lesssim \sqrt{\eps_1},
	\end{equation}
	where $\lesssim$ denotes an inequality where the LHS is bounded by the RHS times a positive constant that only depends on $g_0$, $\sff_0$, $k$, but no other data. We compute
	\begin{align*}
		-\tfrac12 \tr_{g_0} \dot{\tilde{g}}_t 
			& = \tr_{g_0} \sff_0 - \chi'(t) \tr_{g_0} (\sff_0-k) + (n-1)Ct \nonumber \\
			& = H_0 - \chi'(t)(H_0 - \tr_{g_0} k) + (n-1) Ct.
	\end{align*}
	When $\chi'(t) \geq 0$, $\tr_{g_0} k \leq H_0$ and $\chi'(t) \leq 1$ from Lemma \ref{lemm.chi.cutoff} imply that
	\begin{align} \label{eq.mean.curvature.foliation}
		-\tfrac12 \tr_{g_0} \dot{\tilde{g}}_t 
			& = (1-\chi'(t))H_0 + \chi'(t) \tr_{g_0} k + (n-1)Ct \nonumber \\
			& \geq \tr_{g_0} k + (n-1) Ct.
	\end{align}
	Otherwise, if $\chi'(t) < 0$, we use $\tr_{g_0} k \leq H_0$ twice to obtain:
	\[ -\tfrac12 \tr_{g_0} \dot{\tilde{g}}_t \geq H_0 + (n-1) Ct \geq \tr_{g_0} k + (n-1)Ct, \]
	i.e., \eqref{eq.mean.curvature.foliation} holds once again. Property (5) then follows from \eqref{eq.BarHankedeform.step2.H}, \eqref{eq.BarHankedeform.step2.trg}, \eqref{eq.mean.curvature.foliation} by choosing $\eps$ sufficiently small.
\end{proof}

\bibliography{main} 
\bibliographystyle{amsalpha}

\end{document}